\def\newaliasedtheorem#1[#2]#3{
	\newaliascnt{#1@alt}{#2}
	\newtheorem{#1}[#1@alt]{#3}
	\expandafter\newcommand\csname #1@altname\endcsname{#3}
}
\numberwithin{equation}{section}
\newtheoremstyle{slanted}{\topsep}{\topsep}{\slshape}{}{\bfseries}{.}{.5em}{}
\theoremstyle{plain}
\newtheorem{theorem}{Theorem}[section]
\theoremstyle{definition}
\theoremstyle{remark}
\newcommand{\setN}{\mathbb{N}}
\newcommand{\setR}{\mathbb{R}}
\newcommand{\eps}{\varepsilon}
\let\altphi\phi
\let\phi\varphi
\let\varphi\altphi
\let\altphi\undefined
\newcommand{\abs}[1]{\left\lvert#1\right\rvert}
\newcommand{\norm}[1]{\left\lVert#1\right\rVert}
\newcommand{\weakto}{\rightharpoonup}
\DeclareMathOperator{\tr}{tr}
\let\div\undefined
\DeclareMathOperator{\div}{div}
\DeclareMathOperator{\Hess}{Hess}
\newcommand{\di}{\mathop{}\!\mathrm{d}}
\newcommand{\bs}{{\rm bs}}
\newcommand{\loc}{{\rm loc}}
\newcommand{\res}{\mathop{\hbox{\vrule height 7pt width .5pt depth 0pt
			\vrule height .5pt width 6pt depth 0pt}}\nolimits}
\newcommand{\Ch}{{\sf Ch}}
\DeclareMathOperator{\Lip}{Lip}
\DeclareMathOperator{\Lipb}{Lip_b}
\DeclareMathOperator{\Lipbs}{Lip_\bs}
\DeclareMathOperator{\lip}{lip}
\DeclareMathOperator{\BV}{BV}
\DeclareMathOperator{\Per}{Per}
\newcommand{\e}{{\rm e}}
\newcommand{\haus}{\mathscr{H}}
\newcommand{\Leb}{\mathscr{L}}
\newcommand{\nchi}{{\raise.3ex\hbox{\(\chi\)}}}
\newcommand{\ppi}{{\boldsymbol{\pi}}}
\newcommand{\dist}{\mathsf{d}}
\newcommand{\meas}{\mathfrak{m}}
\newcommand{\Liploc}{{\Lip_{\rm bs}}}
\newcommand{\Test}{{\rm Test}}
\newcommand{\TestV}{{\rm TestV}}
\DeclareMathOperator{\RCD}{RCD}
\newfont{\tmpf}{cmsy10 scaled 2500}
\DeclareMathOperator{\Tan}{Tan}
\def\XXint#1#2#3{{\setbox0=\hbox{$#1{#2#3}{\int}$ }
		\vcenter{\hbox{$#2#3$ }}\kern-.6\wd0}}
\keywords{Set of finite perimeter, $\rm RCD$ space, Gauss-Green formula}
\subjclass[2020]{53C23,	26A45, 26B20}
\begin{document}

\title[Constancy in codimension one and locality of unit normal on $\RCD(K,N)$ spaces]
{Constancy of the dimension in codimension one and locality of the unit normal on $\RCD(K,N)$ spaces}

\author{Elia Bru\'e}
	\thanks{Institute for Advanced Study, \url{elia.brue@ias.edu}.}

\author{Enrico Pasqualetto}
\thanks{Scuola Normale Superiore (Pisa), \url{enrico.pasqualetto@sns.it}.}

\author{Daniele Semola}
\thanks{Mathematical Institute, University of Oxford, \url{daniele.semola@maths.ox.ac.uk}.}

\maketitle

\begin{abstract}
		The aim of this paper is threefold. We first prove that, on $\RCD(K,N)$ spaces, the boundary measure of any set with finite perimeter is concentrated
		on the $n$-regular set $\mathcal{R}_n$, where $n\le N$ is the essential dimension of the space. 
		After, we discuss localization properties of the unit normal providing representation formulae for the perimeter measure of intersections and unions of sets with finite perimeter. Finally, we study Gauss-Green formulae for essentially bounded divergence measure vector fields, sharpening the analysis in \cite{BuffaComiMiranda19}.\\
		These tools are fundamental for the development of a regularity theory for local perimeter minimizers on $\RCD(K,N)$ spaces in \cite{MondinoSemola21}.
\end{abstract}

\tableofcontents

\section{Introduction}
In the Euclidean setting, a Borel set $E\subset\setR^n$ has finite perimeter provided its distributional derivative $D\chi_E$ is a finite Radon measure. A celebrated regularity theorem, due to De Giorgi \cite{DeGiorgi54,DeGiorgi55}, says that for any set of finite perimeter $E\subset \setR^n$, letting
\begin{equation}\label{eq:pointunit}
\mathcal{F}E:=\left\lbrace 
x\in\setR^n\, :\, \nu_E(x):=\lim_{r\to 0}\frac{D\chi_E(B_r(x))}{\abs{D\chi_E}(B_r(x))}\quad\text{exists and}\quad \abs{\nu_E(x)}=1  
\right\rbrace \, 
\end{equation}
be the \emph{reduced boundary} of $E$, the following hold:
\begin{itemize}
\item[i)] for any $x\in\mathcal{F}E$ the blow-up of $E$ at $x$ is unique and it is the half-space with interior unit normal vector $\nu_E(x)$;
\item[ii)] the representation formulae $D\chi_E=\nu_E\abs{D\chi_E}$ and $\abs{D\chi_E}=\haus^{n-1}\res\mathcal{F}E$ hold;
\item[iii)] $\mathcal{F}E$ is $(n-1)$-rectifiable.
\end{itemize}
De Giorgi's theorem motivates the use of boundaries of sets of finite perimeter as a weak notion of codimension one oriented hypersurface and is at the root of many subsequent developments of Geometric Measure Theory.
\medskip

More recently, sets of finite perimeter have been studied also on metric measure spaces, starting from \cite{Am01,Am02}. In this framework it is too optimistic to hope for a regularity theorem as strong as the Euclidean one. However, in \cite{AmbrosioBrueSemola19,BruePasqualettoSemola19} a counterpart of De Giorgi's regularity theorem has been obtained in the setting of $\RCD(K,N)$ spaces, with finite $N$, that are a class of possibly singular metric measure spaces with Ricci curvature bounded from below and dimension bounded from above, in synthetic sense.\\
We recall below two of the main results of \cite{AmbrosioBrueSemola19,BruePasqualettoSemola19}. By $\Tan_x(X,\dist,\meas,E)$ we shall denote the set of all possible blow-ups of a set of finite perimeter $E$ at a point $x$, see \autoref{def:blowupset} for the precise notion.

\begin{theorem}[Theorem 3.2 and Theorem 4.1 in \cite{BruePasqualettoSemola19}]
	Let $(X,\dist,\meas)$ be an $\RCD(K,N)$ m.m.s.\ with essential dimension $1\le n\le N$, $E\subset X$ be a set of finite perimeter. Then, for $|D\chi_E|$-a.e.\ $x\in X$, there exists $k=1,\ldots,n$ such that
	\[
	\Tan_x(X,\dist,\meas,E)=\left\lbrace (\setR^k,\dist_{eucl},c_k\Leb^k,0^k,\left\lbrace x_k>0\right\rbrace )\right\rbrace\, .
	\]
Moreover, setting 
\begin{equation}\label{eq:redintro}
\mathcal{F}_kE:= \left\lbrace x\in X\, :\, \Tan_x(X,\dist,\meas,E)= \left\lbrace (\setR^k,\dist_{eucl},c_k\Leb^k,0^k,\left\lbrace x_k>0\right\rbrace )\right\rbrace \right \rbrace\, ,
\end{equation}
it holds that $\mathcal{F}_kE$ is $(\abs{D\chi_E}, k)$-rectifiable for any $k=1,\dots, n$.
\end{theorem}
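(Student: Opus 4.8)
The plan is to derive the statement from the tangent-existence and rectifiability results of \cite{AmbrosioBrueSemola19} by a \emph{propagation-of-regularity} argument. Recall that those results give, for $\abs{D\chi_E}$-a.e.\ $x\in X$: (i) $\Tan_x(X,\dist,\meas,E)\neq\emptyset$, and every element $(Z,\dist_Z,\meas_Z,z,E_Z)$ satisfies $(Z,\dist_Z,\meas_Z,z)\in\Tan_x(X,\dist,\meas)$, with $E_Z$ of locally finite perimeter and $z\in\supp\abs{D\chi_{E_Z}}$; (ii) \emph{at least one} such element is a Euclidean half-space $(\setR^k,\dist_{eucl},c_k\Leb^k,0^k,\{x_k>0\})$ with $1\le k\le N$; (iii) $\abs{D\chi_E}$ is asymptotically doubling and is concentrated on a countable union of $(k-1)$-rectifiable sets on each of which it is mutually comparable with $\haus^{k-1}$. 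What remains is to promote the ``at least one'' of (ii) to ``exactly one'', to improve $k\le N$ to $k\le n$, and to upgrade (iii) to the pure-dimensional rectifiability of $\mathcal F_k E$.

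First I would establish a \emph{locality/iteration principle}: for $\abs{D\chi_E}$-a.e.\ $x$, whenever $(Z,\dist_Z,\meas_Z,z,E_Z)\in\Tan_x(X,\dist,\meas,E)$ and $w\in\supp\abs{D\chi_{E_Z}}$, one has $\Tan_w(Z,\dist_Z,\meas_Z,E_Z)\subseteq\Tan_x(X,\dist,\meas,E)$. This is a Preiss-type diagonal argument: an iterated blow-up is realized as a limit of rescalings of $(X,E)$ along a combined scale, the asymptotic doubling of $\abs{D\chi_E}$ keeping the interaction of the two scales under control, and the exceptional $\abs{D\chi_E}$-null set is discarded by a measurable selection of the obstructing sequences. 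Consequently $\Tan_x(X,\dist,\meas,E)$ is stable under passing to further tangents.

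The core is the \emph{rigidity} step, which I expect to be the main obstacle. From (ii) one reads off $(\setR^k,\dist_{eucl},c_k\Leb^k,0^k)\in\Tan_x(X,\dist,\meas)$; from (iii), $\abs{D\chi_E}$-a.e.\ $x$ lies on a $(k-1)$-rectifiable set along which $\abs{D\chi_E}$ is comparable with $\haus^{k-1}$, hence has a well-defined positive $(k-1)$-density. Feeding these into the almost-rigidity machinery for $\RCD(K,N)$ spaces — the volume-cone-to-metric-cone theorem, the splitting theorem, and the rigidity in the equality case of the relative isoperimetric/perimeter-density inequalities — I would show that \emph{every} element of $\Tan_x(X,\dist,\meas)$ is isomorphic to $\setR^k$ (note that tangents of $\RCD(K,N)$ spaces are not unique in general, so this genuinely uses the codimension-one structure), and that every element of $\Tan_x(X,\dist,\meas,E)$ is forced onto the extremal profile, i.e.\ is a half-space $\{\langle\cdot,e\rangle>0\}\subseteq\setR^k$; here one must rule out anisotropic non-Euclidean cone tangents of $X$ and non-half-space (wedge- or cylinder-type) blow-ups of $E$, uniformly along infinitesimal sequences. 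That the direction $e$ is independent of the sequence — so that the tangent is a singleton — then follows either by differentiating the polar vector of $D\chi_E$ with respect to $\abs{D\chi_E}$ in the sense of the tangent module (producing a well-defined unit normal $\nu_E(x)$) or by a second iteration excluding jumps of the direction across scales. Finally, $k\le n$ refines the $k\le N$ of \cite{AmbrosioBrueSemola19}: once the metric-measure tangent at $x$ is the Euclidean $\setR^k$, so $x\in\mathcal R_k$, the structure theory of $\RCD(K,N)$ spaces forces $\abs{D\chi_E}(\mathcal R_k)=0$ for $k>n$, the essential dimension controlling — via the interplay of the perimeter with the $\meas$-dimensional structure, e.g.\ through the coarea formula — the dimension at which the boundary measure can live.

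It then remains to deduce the $(\abs{D\chi_E},k)$-rectifiability of $\mathcal F_k E$ from the uniqueness just proven. Since at $\abs{D\chi_E}$-a.e.\ point of $\mathcal F_k E$ all blow-ups coincide with the half-space in $\setR^k$, the rescalings of $X$ near such a point are uniformly close to Euclidean balls at all small scales; the standard covering scheme (in the style of Mondino--Naber, mirroring De Giorgi's Euclidean construction) then produces, on a set of full $\abs{D\chi_E}$-measure in $\mathcal F_k E$, countably many bi-Lipschitz charts onto subsets of Euclidean space of the relevant dimension, along which the comparability of $\abs{D\chi_E}$ with Hausdorff measure is preserved. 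This yields the claimed rectifiability.
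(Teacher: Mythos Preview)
This theorem is not proved in the present paper; it is quoted from \cite{AmbrosioBrueSemola19,BruePasqualettoSemola19} (it appears in the Introduction and again as \autoref{thm:DGRCD} in the preliminaries), so there is no proof here to compare your proposal against.

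On the substance of your sketch: the iterated-tangent principle and the rectifiability via bi-Lipschitz charts built from $\delta$-splitting maps are indeed the ingredients used in \cite{BruePasqualettoSemola19}. The ``rigidity step'', however, has a genuine gap. Once the ambient metric tangent at $x$ is already the Euclidean $\setR^k$, volume-cone-to-metric-cone and the splitting theorem give no further leverage, and there is no isoperimetric rigidity statement available that forces \emph{every} blow-up of $E$ to be the \emph{same} half-space along every rescaling sequence. The actual mechanism is different and is precisely the one isolated in this paper as \autoref{prop:harmcoord} and \autoref{prop:blowupsandharmcoord}: one first constructs the unit normal $\nu_E\in L^2_E(TX)$ with $|\nu_E|=1$ via the Gauss--Green formula (\autoref{thm:GGintro}), then picks harmonic $\delta$-splitting maps $(u_i)$ near $x$; the functions $\nu_E\cdot\nabla u_i$ have Lebesgue points $\Per_E$-a.e., and their values at $x$ pin down the normal direction of the limit half-space independently of the sequence of scales. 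Your alternative of ``differentiating the polar vector of $D\chi_E$'' is circular in this setting --- there is no vector-valued measure $D\chi_E$ on an $\RCD$ space prior to the construction of $\nu_E$, and Besicovitch differentiation is unavailable; this is exactly why the splitting-map route was needed.
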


It turns out that it is also possible to reconcile the definition of set of finite perimeter via relaxation, see \autoref{def:setoffiniteperimeter}, with the distributional perspective, proving a Gauss-Green integration by parts formula for sufficiently regular vector fields.

\begin{theorem}[Theorem 2.4 in \cite{BruePasqualettoSemola19}]\label{thm:GGintro}
Let $(X,\dist,\meas)$ be an $\RCD(K,N)$ metric measure space and let $E\subset X$ be a set with finite perimeter and finite measure. Then there exists a unique vector field $\nu_E\in L^2_E(TX)$ such that $\abs{\nu_E}=1$ holds $\Per_E$-a.e.\ and
\begin{equation*}
\int _E\div v\di\meas
=-
\int \langle \mathrm{tr}_E v,\nu_E \rangle \di\Per_E\, , \quad\text{for any $v\in H^{1,2}_C(TX)\cap D(\div )$ such that $\abs{v}\in L^{\infty}(\meas)$}\, .
\end{equation*}
\end{theorem}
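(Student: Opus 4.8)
The plan is to realize $v\mapsto-\int_E\div v\di\meas$ as a continuous linear functional, to produce $\nu_E$ by a duality argument in the tangent module over $\Per_E$, and finally to prove $\abs{\nu_E}=1$ by exploiting the sharpness of the relaxation. Recall that $E$ has finite perimeter precisely when $\chi_E\in\BV(X)$: there are locally Lipschitz functions $f_i\to\chi_E$ in $L^1_{\loc}(\meas)$ (with $0\le f_i\le1$) such that $\abs{\nabla f_i}\meas\weakstarto\Per_E=\abs{D\chi_E}$, which is a finite Radon measure since $E$ has finite perimeter and finite measure. For $v\in H^{1,2}_C(TX)\cap D(\div)$ with $\abs v\in L^\infty(\meas)$ one has $\abs v\in W^{1,2}(X)$, so it admits a quasi-continuous representative $\widetilde{|v|}$; this is well defined $\Per_E$-a.e.\ and $\Per_E$-integrable because on $\RCD(K,N)$ spaces $\Per_E$ is absolutely continuous with respect to the codimension-one Hausdorff measure and therefore charges no set of vanishing $2$-capacity.

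The first step is the a priori estimate
\[
\Big|\int_E\div v\di\meas\Big|\le\int\widetilde{|v|}\di\Per_E.
\]
To prove it, integrate by parts against the divergence: for $\varphi\in\Lipbs(X)$ the function $\varphi f_i$ lies in $W^{1,2}(X)\cap L^\infty(\meas)$, whence $\int\varphi f_i\div v\di\meas=-\int f_i\langle\nabla\varphi,v\rangle\di\meas-\int\varphi\langle\nabla f_i,v\rangle\di\meas$; letting $i\to\infty$, using $f_i\to\chi_E$ in $L^1$ on $\supp\varphi$ and passing to the limit in $\abs{\nabla f_i}\meas\weakstarto\Per_E$ against the quasi-continuous function $\varphi\,\widetilde{|v|}$ (this is where the fine properties of $\Per_E$ enter), one obtains $\big|\int_E\varphi\div v\di\meas+\int_E\langle\nabla\varphi,v\rangle\di\meas\big|\le\int\varphi\,\widetilde{|v|}\di\Per_E$. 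Inserting cutoffs $\varphi_R\nearrow1$ with $\abs{\nabla\varphi_R}\le1/R$ supported outside a ball $B_R(x_0)$, the term $\int_E\langle\nabla\varphi_R,v\rangle\di\meas$ is bounded by $\tfrac1R\norm v_{L^\infty}\meas(E\setminus B_R(x_0))\to0$ because $\meas(E)<\infty$, while $\int_E\varphi_R\div v\di\meas\to\int_E\div v\di\meas$ by dominated convergence ($\div v\in L^1(E)$ as $\meas(E)<\infty$); monotone convergence on the right gives the claim.

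Next I would introduce the trace operator: $\mathrm{tr}_E v\in L^\infty_E(TX)$ is defined by $\Per_E$-a.e.\ differentiation of the averaged covectors $g\mapsto\dashint_{B_r(x)\cap E}\langle v,\nabla g\rangle\di\meas$, with $g$ ranging over a countable generating subset of $\Test(X)$; existence of the limit and the pointwise bound $\abs{\mathrm{tr}_E v}\le\widetilde{|v|}$ rely on the asymptotic doubling of $\Per_E$ and the theory of quasi-continuous representatives. The crucial point — which I expect to be the main obstacle — is that $v\mapsto-\int_E\div v\di\meas$ factors through $\mathrm{tr}_E$: if $\mathrm{tr}_E v=0$, then $v$ can be approximated, in the topology controlled by the a priori estimate, by admissible vector fields vanishing near the essential boundary $\partial^\ast E$, for which the integral is zero; carrying this out rigorously is exactly where the fine structure of $\Per_E$ on $\RCD(K,N)$ spaces is indispensable. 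Granting this, $T\colon\mathrm{tr}_E v\mapsto-\int_E\div v\di\meas$ is a well-defined $1$-Lipschitz linear functional on the subspace $\{\mathrm{tr}_E v\}\subseteq L^1_E(TX)$; extending it by Hahn--Banach and using the duality $(L^1_E(TX))^\ast=L^\infty_E(TX)$ of $L^p$-normed modules, one gets $\nu_E\in L^\infty_E(TX)\subseteq L^2_E(TX)$ with $\abs{\nu_E}\le1$ $\Per_E$-a.e.\ and $\int_E\div v\di\meas=-\int\langle\mathrm{tr}_E v,\nu_E\rangle\di\Per_E$ for every admissible $v$.

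It remains to show $\abs{\nu_E}=1$ $\Per_E$-a.e.\ and uniqueness. The inequality $\abs{\nu_E}\le1$ is built into the construction; for the reverse one uses that $\Per_E$ is the \emph{relaxed} perimeter: given a Borel set $A$, regularizing a recovery sequence via the heat flow yields admissible vector fields $v$ with $\widetilde{|v|}\le\chi_A$ $\Per_E$-a.e.\ (up to a small error) and $-\int_E\div v\di\meas$ as close as desired to $\Per_E(A)$, and plugging these into the formula forces $\int_A\abs{\nu_E}\di\Per_E\ge\Per_E(A)$, hence $\abs{\nu_E}=1$. (Alternatively, one may read $\abs{\nu_E(x)}=1$ off the blow-up theorem recalled above, since the blow-up of $E$ at $\Per_E$-a.e.\ $x$ is a Euclidean half-space carrying a genuine unit normal.) Finally, if $\nu_E,\nu'_E$ both satisfy the formula, then $\int\langle\mathrm{tr}_E v,\nu_E-\nu'_E\rangle\di\Per_E=0$ for every admissible $v$; since the traces of such $v$ generate $L^2_E(TX)$ as a module (the gradients of test functions generating the tangent module), $\nu_E=\nu'_E$, which completes the proof.
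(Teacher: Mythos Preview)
This theorem is not proved in the present paper: it is quoted from \cite{BruePasqualettoSemola19} (see also \autoref{thm:GGRCDsmooth}) as a background result. What one can read off from the paper's later use of the original argument (the proof of (b) in \autoref{sec5}) is that the proof in \cite{BruePasqualettoSemola19} is \emph{not} a Hahn--Banach/duality argument but a construction via heat-flow regularization: one sets $\nu_t\coloneqq\nabla P_t\chi_E/P_t^*|D\chi_E|$ and $\mu_t\coloneqq P_t^*|D\chi_E|\,\meas$, uses the Bakry--\'Emery contraction $|\nabla P_t\chi_E|\le e^{-Kt}P_t^*|D\chi_E|$ to get $|\nu_t|\le e^{-Kt}$, and then the key analytic fact (Lemma 2.7 of \cite{BruePasqualettoSemola19}, invoked verbatim in \autoref{sec5}) is that $\int_X|1-e^{Kt}|\nu_t||\,\di\mu_t\to 0$, which simultaneously yields existence of the limit $\nu_E$ and $|\nu_E|=1$.

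Your proposal has genuine gaps beyond the one you flag. First, in the a priori estimate you pass to the limit in $\int\varphi|v|\,|\nabla f_i|\,\di\meas$ by testing the weak-$*$ convergence $|\nabla f_i|\meas\weakstarto\Per_E$ against the \emph{quasi-continuous} function $\varphi\,\widetilde{|v|}$; weak-$*$ convergence of measures is only against continuous test functions, and upgrading this requires exactly the kind of capacity/heat-flow machinery you are trying to avoid. Second, your ad hoc definition of $\mathrm{tr}_E v$ via Lebesgue differentiation of $\fint_{B_r(x)\cap E}\langle v,\nabla g\rangle\,\di\meas$ does not obviously produce an element of $L^2_E(TX)$ as defined in \autoref{thm:tg_mod_over_bdry} (which is built as a quotient of the capacitary module $L^0_{\rm Cap}(TX)$), nor is it clear that the resulting map is $L^\infty(\Per_E)$-linear. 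Third, and most seriously, you yourself identify the factoring of $v\mapsto-\int_E\div v\,\di\meas$ through $\mathrm{tr}_E$ as ``the main obstacle'' and then simply grant it; approximating $v$ with $\mathrm{tr}_E v=0$ by vector fields vanishing near $\partial^*E$ is not available at this level of generality. Finally, your alternative route to $|\nu_E|=1$ via the blow-up theorem is circular: in \cite{BruePasqualettoSemola19} the Gauss--Green formula (their Theorem 2.4) is established first and is an input to the rectifiability/blow-up results (their Theorems 3.2 and 4.1), not a consequence of them.
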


Above, $L^2_E(TX)$ denotes the restriction of the tangent module $L^2(TX)$ to the boundary of the set of finite perimeter $E$, see \autoref{thm:tg_mod_over_bdry} for the precise definition. The vector field $\nu_E$ plays the role of the interior unit normal in the smooth setting and the Gauss-Green formula holds when testing against vector fields in the Sobolev space $H^{1,2}_C(TX)$ (i.e.\ with $L^2$ covariant derivative, see \eqref{eq:H12}).
\medskip

This note is a further contribution to the theory of sets of finite perimeter in this setting. In particular:
\begin{itemize}
\item we will prove that reduced boundaries of sets of finite perimeter have constant dimension, positively answering to one of the questions left open in \cite{BruePasqualettoSemola19};
\item we will clarify in which sense the blow-up of a set of finite perimeter is orthogonal to its unit normal at almost every point and develop a series of useful tools suitable to treat cut and paste operations between sets of finite perimeter in this setting, by analogy with the Euclidean theory (see for instance \cite[Chapter 16]{Maggi12});
\item relying on the finite dimensionality assumption $N<\infty$, we will sharpen the Gauss-Green integration by parts formulae for essentially bounded divergence measure vector fields studied in \cite{BuffaComiMiranda19} on $\RCD(K,\infty)$ metric measure spaces.
\end{itemize}

The class of $\RCD(K,N)$ metric measure spaces includes as notable examples (pointed measured) Gromov-Hausdorff limits of smooth manifolds with uniform lower bounds on their Ricci curvature (the so called Ricci limit spaces) and Alexandrov spaces with sectional curvature bounded from below. Our results are meant to be a further development to the Geometric Measure Theory in this setting and they play a fundamental role in the study of the mean curvature and regularity of local perimeter minimizers in \cite{MondinoSemola21}. Furthermore, as the recent \cite{AntonelliFogagnoloPozzetta21,AntonelliPasqualettoPozzetta21,Antonellietal21} show, understanding classical questions of Geometric Measure Theory on non smooth $\RCD(K,N)$ spaces is relevant to investigate challenging open problems on smooth manifolds with lower Ricci curvature bounds, such as the isoperimetric problem.
\medskip

Below we outline the main achievements of this note, in comparison with the previous literature.

\subsection*{Constancy of the dimension of the reduced boundary}

In \cite{BrueSemola20} the first and third named authors proved that for any $\RCD(K,N)$ metric measure space $(X,\dist,\meas)$ there exists a natural number $1\le n\le N$, called \emph{essential dimension} of $X$, such that at $\meas$-almost every point of $X$ the unique blow-up of $(X,\dist,\meas)$ at $x$ is $\setR^n$ (with canonical metric measure structure). 
\medskip

One of the questions left open in \cite{BruePasqualettoSemola19} was the possibility of having sets of finite perimeter $E\subset X$ such that $\abs{D\chi_E}(\mathcal{F}_kE)>0$ for some $k<n$, where $\mathcal F_kE$ is as in \eqref{eq:redintro} and $n$ denotes the essential dimension of $(X,\dist,\meas)$ as above. Our first main result is a negative answer to this question.

\begin{theorem}[Cf.\ with \autoref{cor:repr} below]\label{thm:cdimintro}
Let $(X,\dist,\meas)$ be an $\RCD(K,N)$ metric measure space for $K\in\setR$ and $1\le N<\infty$. Let $1\le n\le N$ be its essential dimension. Then for any set of finite perimeter $E\subset X$ it holds that $\abs{D\chi_E}(\mathcal{F}_kE)=0$ for any $k\neq n$.
\end{theorem}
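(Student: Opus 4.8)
The plan is to argue by contradiction: suppose $\abs{D\chi_E}(\mathcal{F}_kE)>0$ for some $k<n$, and derive a violation of the essential dimension being $n$. The key geometric input is that if $x\in\mathcal{F}_kE$, then the pointed measured Gromov–Hausdorff blow-up of $(X,\dist,\meas)$ at $x$ (with the measure renormalized in the usual way so that $\meas(B_1)=1$ along the rescaled sequence) must be compatible with the blow-up of the \emph{set} $E$ at $x$ being the half-space $\{x_k>0\}\subset\setR^k$. In particular, the half-space carries a nontrivial perimeter measure in $\setR^k$, which forces the ambient tangent cone at $x$ to contain $\setR^k$ as a factor — more precisely, the blow-up of $X$ at $x$ must split a Euclidean factor of dimension at least $k$, but in fact the convergence of the sets $E_r\to\{x_k>0\}$ in the relevant $L^1_{\loc}$ sense pins down the tangent space to be \emph{exactly} $\setR^k$ (with a multiple of $\Leb^k$). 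Indeed, this is precisely the content encoded in the definition \eqref{eq:redintro}: membership in $\mathcal F_kE$ already records that $\Tan_x(X,\dist,\meas,E)$ consists of the pair $(\setR^k,\dots,\{x_k>0\})$, so the \emph{ambient} tangent is $\setR^k$.

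First I would invoke the rectifiability part of the cited theorem from \cite{BruePasqualettoSemola19}: $\mathcal F_kE$ is $(\abs{D\chi_E},k)$-rectifiable, and in particular $\abs{D\chi_E}\measrestr\mathcal F_kE$ is a $\sigma$-finite measure comparable to $\haus^{k-1}$ restricted to a countable union of Lipschitz images of subsets of $\setR^{k-1}$; at $\abs{D\chi_E}$-a.e.\ such point the perimeter measure has positive and finite $(k-1)$-density. Second, at such a point $x$ one also has, by the structure theory of $\RCD(K,N)$ spaces (the essential dimension result of \cite{BrueSemola20} together with the mutual-singularity/a.e.\ regularity statements), a comparison between the perimeter density and the ambient density $\lim_r \meas(B_r(x))/r^k$: since the blow-up of $X$ at $x$ is $\setR^k$, the point $x$ lies in the $k$-regular set $\mathcal R_k$. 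Third — and this is the crux — I would show that $\meas(\mathcal R_k)=0$ whenever $k<n$ is the essential dimension: this is exactly the constancy-of-dimension theorem of \cite{BrueSemola20}, which says the regular set $\mathcal R_n$ has full measure and hence $\mathcal R_k$ is $\meas$-null for $k\neq n$. But $\abs{D\chi_E}\measrestr\mathcal F_kE$ is supported on $\mathcal R_k$ and is nontrivial, so one needs an \emph{absolute continuity} or \emph{density-transfer} bridge: the perimeter measure of a set of finite perimeter cannot charge a set that is $\meas$-null in a sufficiently strong (density) sense. Concretely, I would use that along the blow-up at $x\in\mathcal F_kE$ one has simultaneous convergence $\meas_r\to c_k\Leb^k$ and $\Per_{E,r}\to \Per_{\{x_k>0\}}=c_k'\haus^{k-1}\measrestr\{x_k=0\}$ with uniform perimeter bounds (from the $\RCD$ Bishop–Gromov and the finite-perimeter hypothesis); this is the content that lets one relate the two densities at scale $r$.

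The main obstacle — and the step that requires genuine work rather than citation — is ruling out the ``dimension drop'' scenario: a priori, $E$ could be so thin that its perimeter concentrates on a $\meas$-null set of points $x$ where the ambient space blows up to $\setR^k$ with $k<n$, even though such points are $\meas$-negligible. The resolution must go through a quantitative comparison at small scales, of the form: for $\abs{D\chi_E}$-a.e.\ $x$ one has $\meas(B_r(x))\gtrsim r\cdot\abs{D\chi_E}(B_r(x))$ (a relative isoperimetric-type inequality valid on $\RCD$ spaces, e.g.\ via the coarea formula for the distance function), which forces the ambient density to be nonzero at the same scale at which the perimeter density is nonzero, and therefore the ambient tangent at $x$ — being $\setR^k$ — is in fact $\meas$-\emph{essential}, i.e.\ $x\in\mathcal R_k$ with $k$ contributing positive $\meas$-density; combined with $\meas(\mathcal R_k)=0$ for $k\neq n$ one gets the contradiction. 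So the proof reduces to: (i) the rectifiability and blow-up theorems of \cite{BruePasqualettoSemola19}, (ii) the constancy of dimension of \cite{BrueSemola20}, and (iii) a relative isoperimetric inequality linking $\Per_E$ to $\meas$ at small scales; assembling (i)--(iii) yields \autoref{thm:cdimintro}. I expect the cleanest route in the paper is actually to prove the stronger \autoref{cor:repr} (concentration of $\abs{D\chi_E}$ on $\mathcal R_n$) directly, from which \autoref{thm:cdimintro} is immediate, and the reference ``Cf.\ with \autoref{cor:repr}'' suggests exactly this.
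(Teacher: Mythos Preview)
Your proposal has a genuine gap at the ``density-transfer bridge'' step, and the argument as written does not close. You correctly observe that $\mathcal F_kE\subset\mathcal R_k$ (this is immediate from the definition of $\mathcal F_kE$) and that $\meas(\mathcal R_k)=0$ for $k\neq n$ by \cite{BrueSemola20}. But the perimeter measure $\abs{D\chi_E}$ is \emph{always} singular with respect to $\meas$: it is a codimension-one measure, concentrated on a $\meas$-null set. So the fact that $\mathcal R_k$ is $\meas$-null gives you no leverage whatsoever towards $\abs{D\chi_E}(\mathcal R_k)=0$. Your isoperimetric-type estimate $\meas(B_r(x))\gtrsim r\cdot\abs{D\chi_E}(B_r(x))$ only reconfirms that the ambient tangent at $x$ is $k$-dimensional, i.e.\ $x\in\mathcal R_k$; it says nothing about whether a positive-$\meas$ neighbourhood of $x$ lies in $\mathcal R_k$. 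The phrase ``$\meas$-essential'' is doing work it cannot do: a single point, or a codimension-one set of points, can perfectly well belong to $\mathcal R_k$ while $\meas(\mathcal R_k)=0$. Indeed, the paper explicitly remarks (just after the statement of the theorem in the introduction) that it is \emph{open} whether $\mathcal R_k$ can be nonempty for $k<n$ on a collapsed $\RCD$ space; your argument, if it worked, would settle this.

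The paper's route is entirely different and uses two ingredients you do not mention. First, a recent characterization of $\BV$ functions and their total variation via a countable family of $\infty$-test plans concentrated on geodesics (\autoref{thm:master_tp}, from \cite{NobiliPasqualettoSchultz21}): one has $\abs{Df}\le 2^N\abs{Df}_\Pi$, so it suffices to show $\abs{Df}_\Pi(X\setminus\mathcal R_n)=0$. Second, the H\"older continuity of tangent cones along the interior of geodesics (\autoref{prop:a.e.full}, from \cite{Deng20}): if $\gamma(t)\in\mathcal R_n$ for a.e.\ $t\in(0,1)$, then $\gamma(t)\in\mathcal R_n$ for \emph{every} $t\in(0,1)$. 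The argument is then: since each $\ppi_i$ has bounded compression, $(\ppi_i\otimes\Leb^1)$-a.e.\ $(\gamma,t)$ has $\gamma_t\in\mathcal R_n$; by Deng, this upgrades to $\gamma_t\in\mathcal R_n$ for all $t\in(0,1)$ for $\ppi_i$-a.e.\ $\gamma$; hence $\gamma_\#\abs{D(f\circ\gamma)}$ is concentrated on $\mathcal R_n$ for $\ppi_i$-a.e.\ $\gamma$, and so is $\abs{Df}_\Pi$. This proves the stronger \autoref{thm:tv_on_reg_set} for arbitrary $f\in\BV(X)$, of which \autoref{cor:repr} and \autoref{thm:cdimintro} are immediate corollaries.
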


The proof of \autoref{thm:cdimintro} builds on \cite{Deng20}, where H\"older continuity of tangent cones in the interior of geodesics has been proved on $\RCD(K,N)$ spaces (see also the previous \cite{ColdingNaber12}), and on a recent characterization of $\BV$ functions via test plans concentrated on geodesics \cite{NobiliPasqualettoSchultz21}. Actually, \autoref{thm:cdimintro} will be a corollary of a more general result dealing with arbitrary functions with bounded variation, see \autoref{thm:tv_on_reg_set}. Some consequences at the level of the local dimension of the tangent module $L^2_E(TX)$ will also be addressed, see \autoref{thm:constdim tanmod}.

Notice that it is currently unknown whether on an $\RCD(K,N)$ space, or a collapsed Ricci limit space, with essential dimension $n$ there could be points where all tangent cones are Euclidean of dimension $k<n$ or not. A corollary of \autoref{thm:cdimintro} is that there cannot be collapse relative to the ambient along boundaries of sets of finite perimeter, see \autoref{cor:noncoll} below for a precise statement.

\subsection*{Pointwise behaviour of the unit normal and operations with sets of finite perimeter}
The pointwise characterization \eqref{eq:pointunit} is a key tool for the proof of De Giorgi's theorem. Moreover, the fact that blow-ups at reduced boundary points are half-spaces orthogonal to the unit normal is of fundamental importance for the sake of many applications, for instance to analyze the behaviour of the unit normal with respect to natural cut and paste operations between sets of finite perimeter, as in \cite[Chapter 16]{Maggi12}.

In \cite{AmbrosioBrueSemola19,BruePasqualettoSemola19} a new set of ideas was needed to develop the regularity theory for sets of finite perimeter, as it was necessary to avoid \eqref{eq:pointunit} and the use of Besicovitch differentiation theorem. However, after \autoref{thm:GGintro} it is natural to investigate if blow-ups of sets of finite perimeter are orthogonal to their unit normal, in some sense.\\
The second main result of this paper is a positive answer to this question.

\begin{definition}[See \autoref{def:goodcord} below]
Let $(X,\dist,\meas)$ be an $\RCD(K,N)$ metric measure space for some $K\in\setR$ and $1\le N<\infty$ with essential dimension $1\le n\le N$ and let $E\subset X$ be a set of finite perimeter. Then, for any $x\in \mathcal{F}_n E$, any $n$-tuple of harmonic functions $(u_i):B_{r_x}(x)\to\setR^n$ satisfying the following properties is called a system of \emph{good coordinates} for $E$ at $x$.

\begin{itemize}

\item[(i)] For any $i,j\in \{ 1,\dots, n \}$,
\begin{equation*}
	\lim_{r\to 0}\fint_{B_r(x)}\abs{\nabla u_i\cdot\nabla u_j-\delta_{ij}}\di\meas=\lim_{r\to 0}\fint_{B_r(x)}\abs{\nabla u_i\cdot\nabla u_j-\delta_{ij}}\di\abs{D\chi_E}=0\, .
\end{equation*}

\item[(ii)] For any $i\in\{1,\dots, n\}$ the following limits exist:
\begin{equation*}
	\nu_i(x):=\lim_{r\to 0}\fint_{B_r(x)}\nu_E\cdot\nabla u_i\di\abs{D\chi_E},
\end{equation*}
\begin{equation*}
	\lim_{r\to 0}\fint_{B_r(x)}\abs{\nu_i(x)-\nu_E\cdot\nabla u_i}\di\abs{D\chi_E}=0 \, .
\end{equation*}
\end{itemize}
\end{definition}

In \autoref{prop:harmcoord} we are going to prove that good coordinates exist at almost every point with respect to the perimeter measure and that the vector $\nu\in\setR^n$ defined above is of unit length. Moreover in \autoref{prop:blowupsandharmcoord} we characterize the blow-up of a set of finite perimeter as the half-space orthogonal to the vector $\nu$ constructed by means of the good coordinates, providing a counterpart of the classical Euclidean result.\\
Building on top of these tools, in \autoref{thm:cutandpaste} we consider the behaviour of the unit normal of unions and intersections of sets of finite perimeter. With respect to previous results in the literature (see for instance \cite{AntonelliPasqualettoPozzetta21}), the main contribution here is the study of the set where the two sets have \emph{mutually tangent} boundaries.

\subsection*{Gauss-Green formulae for essentially bounded divergence measure vector fields}

In many situations in (non smooth) Geometric Analysis it is necessary to deal with functions that are not smooth but satisfy certain \emph{second order} bounds in a weak sense. A typical example are distance functions on manifolds with lower Ricci curvature bounds: they are not globally differentiable, in general, but the Laplacian comparison holds globally in the sense of distributions. There are usually two possibilities to deal with these functions: 
\begin{itemize}
\item argue by ad-hoc regularizations, preserving the good bounds while gaining smoothness and rely on classical tools, then pass to the limit; 
\item prove that classical tools for smooth functions and domains (such as Gauss-Green formulae and divergence theorem) hold also under weaker regularity assumptions.
\end{itemize}

Another main result of this note, going in the second direction hinted above, is the extension of \autoref{thm:GGintro} to the case of \emph{essentially bounded divergence measure} vector fields.

\begin{definition}
Let $(X,\dist,\meas)$ be an $\RCD(K,N)$ metric measure space. We say that a vector field $V\in L^{\infty}(TX)$ is an essentially bounded divergence measure vector field if its distributional divergence is a finite Borel measure, that is if $\div V$ is a finite Borel measure such that, for any Lipschitz function with compact support $g:X\to\setR$, it holds
\[
\int_X g\di \div V=-\int_X \nabla g\cdot V\di\meas\, .
\]
\end{definition}

The introduction of this class of vector fields in the Euclidean setting dates back to \cite{Anzellotti83}.
For the sake of the $\RCD$ theory, the key remark is that a large family of essentially bounded divergence measure vector fields is given by gradients of distance functions, thanks to the Laplacian comparison theorem \cite{Gigli15}.
\medskip

Gauss-Green integration by parts formulae for sets of finite perimeter and vector fields with such low regularity in the Euclidean setting have been studied in \cite{ChenTorresZiemer09,ComiPayne20}. Later on, in \cite{BuffaComiMiranda19} the theory has been partially extended to locally compact $\RCD(K,\infty)$ metric measure spaces (see also the recent \cite{Braun21}). Here, fully exploiting the finite dimensionality assumption $N<\infty$ and the regularity theory for sets of finite perimeter, we achieve a quite complete extension of the Euclidean results, sharpening those in \cite{BuffaComiMiranda19}.

\begin{theorem}[See \autoref{thm:GaussGreenEssDivVect} below]\label{thmLGGweakintro}
	Let $(X,\dist,\meas)$ be an $\RCD(K,N)$ metric measure space for some $K\in\setR$ and $1\le N<\infty$. Let $E\subset X$ be a set of finite perimeter and let $V\in\mathcal{DM}^{\infty}(X)$. Then we have the Gauss-Green integration by parts formulae: for any function $\phi\in \Lip_c(X)$ it holds
	\[\begin{split}
		\int_{E^{(1)}}\phi \div V+\int_E\nabla\phi\cdot V\di\meas&=-\int_{\mathcal{F}E}\phi\left(V\cdot\nu_E\right)_{\mathrm{int}}\di\abs{D\chi_E} \, ,\\
		\int_{E^{(1)}\cup\mathcal{F}E}\phi\div V+\int_E\nabla\phi\cdot V\di\meas&=-\int_{\mathcal{F}E}\phi\left(V\cdot\nu_E\right)_{\mathrm{ext}}\di\abs{D\chi_E}\, ,
	\end{split}\]
	where $(V\cdot \nu_E)_{_{\mathrm{int}}}$ and $(V\cdot \nu_E)_{_{\mathrm{ext}}}$ belong to $L^{\infty}(\mathcal{F}E,\abs{D\chi_E})$ and satisfy
   \[\begin{split}
	\norm{\left(V\cdot\nu_E\right)_{\mathrm{int}}}_{L^{\infty}(\mathcal{F}E,\abs{D\chi_E})}&\le \norm{V}_{L^{\infty}(E,\meas)} \, ,\\
	\norm{\left(V\cdot\nu_E\right)_{\mathrm{ext}}}_{L^{\infty}(\mathcal{F}E,\abs{D\chi_E})}&\le \norm{V}_{L^{\infty}(X\setminus E,\meas)}\, .
   \end{split}\]
\end{theorem}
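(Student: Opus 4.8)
The plan is to reduce the essentially bounded case to the Sobolev case already covered by Theorem~\ref{thm:GGintro} via a careful approximation argument, and then to handle the two different ``representatives'' $(V\cdot\nu_E)_{\mathrm{int}}$ and $(V\cdot\nu_E)_{\mathrm{ext}}$ by approximating the indicator $\chi_E$ from inside and from outside. First I would fix $V\in\mathcal{DM}^\infty(X)$ and regularize it by the heat flow on vector fields, $V_t:=\mathrm{e}^{t\Delta_H}V$ (or a suitable mollification); by the $\RCD(K,N)$ regularization theory one has $V_t\in H^{1,2}_C(TX)\cap D(\div)$ with $|V_t|\in L^\infty$, $\div V_t\in L^2\cap L^\infty$, $\|V_t\|_{L^\infty}\le\|V\|_{L^\infty}$, and $V_t\to V$, $\div V_t\rightharpoonup\div V$ in the appropriate senses (the latter weakly-$*$ as measures, after pairing $\div V_t\,\meas$ against continuous functions). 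For each $t$ Theorem~\ref{thm:GGintro} (applied to $\phi V_t$, or directly in the form with $E^{(1)}$ once one knows $|D\chi_E|$ is concentrated on $\mathcal{F}E$ and that $E^{(1)}$, $E^{(0)}$, $\mathcal{F}E$ partition $X$ up to $\meas$- and $|D\chi_E|$-negligible sets) gives
\[
\int_{E^{(1)}}\phi\div V_t+\int_E\nabla\phi\cdot V_t\di\meas=-\int_{\mathcal{F}E}\phi\,(V_t\cdot\nu_E)\di\abs{D\chi_E},
\]
where $V_t\cdot\nu_E:=\langle\mathrm{tr}_E V_t,\nu_E\rangle\in L^2(\mathcal{F}E,|D\chi_E|)$ is the trace from Theorem~\ref{thm:GGintro}.

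The key quantitative point is the $L^\infty$ bound on the traces. Here I would use the rectifiability and blow-up structure from Theorem (Theorem 3.2/4.1 of \cite{BruePasqualettoSemola19}) together with the representation $|D\chi_E|=\sum_k\haus^k\res\mathcal{F}_k E$ (weighted), and the fact that, at $|D\chi_E|$-a.e.\ point $x$, the blow-up of $E$ is a half-space and the blow-up of $V_t$ converges to a constant vector whose norm is controlled by $\|V_t\|_{L^\infty(E)}$ on the $E$-side (respectively $\|V_t\|_{L^\infty(X\setminus E)}$ on the complement side); pairing the half-space blow-up of $D\chi_E$ against this gives $|(V_t\cdot\nu_E)(x)|\le\|V_t\|_{L^\infty(E,\meas)}$ for $|D\chi_E|$-a.e.\ $x$. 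Equivalently, and perhaps more robustly, I would test Theorem~\ref{thm:GGintro} for $V_t$ against $\phi$ supported in small balls to get, after dividing by $|D\chi_E|(B_r(x))$ and letting $r\to0$ using Lebesgue differentiation for $|D\chi_E|$, the desired pointwise bound from the $L^\infty$ bound on $V_t$ on $E$ together with $\meas(E\cap B_r(x))/|D\chi_E|(B_r(x))\to 0$ (density estimates at reduced-boundary points). With the uniform bound $\|V_t\cdot\nu_E\|_{L^\infty(\mathcal{F}E,|D\chi_E|)}\le\|V\|_{L^\infty(E,\meas)}$ in hand, by weak-$*$ compactness in $L^\infty(\mathcal{F}E,|D\chi_E|)=\big(L^1(\mathcal{F}E,|D\chi_E|)\big)^*$ a subsequence of $V_t\cdot\nu_E$ converges weakly-$*$ to some $(V\cdot\nu_E)_{\mathrm{int}}$ with the same $L^\infty$ bound; passing to the limit in the integrated identity (the left-hand side converges because $\phi,\nabla\phi$ are bounded with compact support and $V_t\to V$ in $L^2_{\mathrm{loc}}$ and $\div V_t\rightharpoonup\div V$, while $\chi_{E^{(1)}}\phi$ and $\chi_E\nabla\phi$ are fixed bounded Borel test objects) yields the first Gauss-Green formula.

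For the second (``exterior'') formula I would run the same scheme but approximate using $\chi_{E^{(1)}\cup\mathcal{F}E}=1-\chi_{E^{(0)}}$: apply the first formula to the complement $X\setminus E$ (which is again of finite perimeter, with $\nu_{X\setminus E}=-\nu_E$, $\mathcal{F}(X\setminus E)=\mathcal{F}E$, $(X\setminus E)^{(1)}=E^{(0)}$), obtaining $\int_{E^{(0)}}\phi\div V+\int_{X\setminus E}\nabla\phi\cdot V\di\meas=-\int_{\mathcal{F}E}\phi\,(V\cdot\nu_{X\setminus E})_{\mathrm{int}}\di|D\chi_E|$, then subtract from the global identity $\int_X\phi\div V+\int_X\nabla\phi\cdot V\di\meas=0$ (which holds for $\phi\in\Lip_c(X)$ by the very definition of $\div V$, once $\phi\div V$ is interpreted against the measure $\div V$) and set $(V\cdot\nu_E)_{\mathrm{ext}}:=-(V\cdot\nu_{X\setminus E})_{\mathrm{int}}$; the norm bound $\|(V\cdot\nu_E)_{\mathrm{ext}}\|_{L^\infty}\le\|V\|_{L^\infty(X\setminus E,\meas)}$ is then exactly the interior bound applied to $X\setminus E$. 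One should also check that the two measures $\phi\,\di\div V$ restricted to $E^{(1)}$ and to $E^{(1)}\cup\mathcal{F}E$ make sense, i.e.\ that $|\div V|(\mathcal{F}E)$ can be nonzero (it can, and this is precisely why two representatives appear), so the subtraction has to be done at the level of the identities and not by manipulating $\div V$ pointwise.

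The main obstacle I anticipate is the uniform $L^\infty$ control of the traces $V_t\cdot\nu_E$ independently of the regularization parameter, and more precisely disentangling the ``interior'' contribution $\|V_t\|_{L^\infty(E,\meas)}$ from the ``exterior'' one. The naive bound from Theorem~\ref{thm:GGintro} only gives control by $\|V_t\|_{L^\infty(X,\meas)}$; to gain the one-sided bound one genuinely needs the blow-up analysis — that at a.e.\ reduced boundary point the perimeter measure sees only the half-space $\{x_n>0\}$ and the trace of a vector field depends, in the limit, only on its values on that half-space — which in turn relies on the rectifiability structure of $\mathcal{F}E$ and on the fact that the regularized $V_t$ are still controlled in $L^\infty$ separately on $E$ and on $X\setminus E$ when the mollification is localized, or alternatively on a density/Lebesgue-point argument for $|D\chi_E|$ combined with vanishing of $\meas(E\cap B_r)/|D\chi_E|(B_r)$ or $\meas((X\setminus E)\cap B_r)/|D\chi_E|(B_r)$ on the appropriate portion of the boundary. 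Making this localization compatible with the global heat-flow regularization — or replacing the heat flow by a partition-of-unity mollification that preserves one-sided $L^\infty$ bounds up to an error vanishing with the scale — is the delicate technical core of the argument; everything else is soft functional analysis and the already-established structure theory.
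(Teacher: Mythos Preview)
Your strategy of regularizing $V$ via the heat flow is genuinely different from the paper's, and the obstacle you flag at the end is real and, as stated, unresolved. The paper does \emph{not} regularize $V$; it builds directly on the framework of \cite{BuffaComiMiranda19}, where the traces $(V\cdot\nu_E)_{\mathrm{int}}$ and $(V\cdot\nu_E)_{\mathrm{ext}}$ are \emph{defined} as densities of the weak limits of $\nabla P_t\chi_E\cdot(\chi_E V)$ and $\nabla P_t\chi_E\cdot(\chi_{E^c}V)$ (times $2$), so the factors $\chi_E$, $\chi_{E^c}$ are already built in. For the sharp interior bound one then estimates $\int\phi\,\chi_E(\nabla P_t\chi_E\cdot V)\,\di\meas$; since $V$ is multiplied by $\chi_E$, the one-sided norm $\|V\|_{L^\infty(E,\meas)}$ appears for free. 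Writing $\nabla P_t\chi_E=\nu_t\,P_t^*|D\chi_E|$ with $|\nu_t|\to 1$ (from \cite{BruePasqualettoSemola19}), one is reduced to $\int P_t(\phi\chi_E)\,\di|D\chi_E|$, and by the density-$\tfrac12$ result (\autoref{prop:federertype}(ii)) this tends to $\tfrac12\int\phi\,\di|D\chi_E|$. The $\tfrac12$ cancels the $2$ in the definition of the trace, giving the sharp constant with no blow-up of $V$ anywhere.

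By contrast, your scheme needs $\|\langle\mathrm{tr}_E V_t,\nu_E\rangle\|_{L^\infty(|D\chi_E|)}\le\|V\|_{L^\infty(E,\meas)}$ uniformly in $t$, and heat-flow regularization of $V$ destroys one-sided $L^\infty$ control: $V_t(x)$ averages $V$ over a neighbourhood meeting both $E$ and $X\setminus E$, so $\|V_t\|_{L^\infty(E)}$ is not bounded by $\|V\|_{L^\infty(E)}$. The blow-up of $V_t$ at a boundary point likewise sees both sides, so your sketch does not close this gap, and the ``partition-of-unity mollification'' alternative you mention is not available on a general $\RCD(K,N)$ space. A second point you gloss over: to land on the domains $E^{(1)}$ and $E^{(1)}\cup\mathcal F E$ (rather than the heat-flow sets $\tilde E^{(t)}$ from \cite{BuffaComiMiranda19}) the paper first proves $|\div V|\ll\haus^h$ via the Gauss--Green formula on balls and a covering argument, and then matches the two decompositions using \autoref{prop:federertype}. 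In your scheme the analogous issue is whether $\int_{E^{(1)}}\phi\,\div V_t\,\di\meas\to\int_{E^{(1)}}\phi\,\di\div V$, which again requires control of where $|\div V|$ can concentrate.
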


We refer to \autoref{sec5} for the precise definitions of the various terms appearing in the Gauss-Green formulae above. We just remark that $(V\cdot\nu_E)_{\mathrm{int}}$ and $(V\cdot\nu_E)_{\mathrm{ext}}$ play the role of the interior and exterior normal traces of the vector field $V$ on the boundary of the set of finite perimeter $E$. With respect to the case of regular $H^{1,2}_C(TX)$ vector fields, these traces might be different, as it happens in simple examples where the divergence of $V$ has a singular part on $\mathcal{F}E$.

The precise understanding of the normal traces in \autoref{thmLGGweakintro} allows us also to prove that they behave well under the natural cut and paste operations (see \autoref{prop:cutandpasteweaker}), in analogy with the Euclidean theory (see for instance \cite[Chapter 3]{Comi20}).

%

\subsection*{Acknowledgements}
The first named author is supported by the Giorgio and Elena Petronio Fellowship at the Institute for
Advanced Study. The second named author is supported by the Balzan project led by Luigi Ambrosio. The third named author is supported by the European Research Council (ERC), under the European’s Union Horizon 2020 research and innovation programme, via the ERC Starting Grant “CURVATURE”, grant agreement No. 802689. The authors are grateful to Gioacchino Antonelli, Camillo Brena and Nicola Gigli for useful comments on the preliminary version of this manuscript.

\section{Preliminaries}\label{sec:preliminaries}
\subsection{Basic calculus tools}

Throughout this paper a \textit{metric measure space} is a triple $(X,\dist,\meas)$, where $(X,\dist)$ is a complete and separable metric space and $\meas$ is a nonnegative Borel measure on $X$ finite on bounded sets.

We will denote by $B_r(x)=\{\dist(\cdot,x)<r\}$ and $\bar{B}_r(x)=\{\dist(\cdot,x)\leq r\}$ the open and closed balls respectively, by $\Lip(X,\dist)$ (resp.\ $\Lipb(X,\dist)$, $\Lip_c(X,\dist)$, $\Lip_{\rm bs}(X,\dist)$, $\Lip_{\text{loc}}(X,\dist)$) the space of Lipschitz
(resp.\ bounded Lipschitz, compactly supported Lipschitz, Lipschitz with bounded support, Lipschitz on bounded sets) functions and for any $f\in\Lip(X,\dist)$ we shall denote its slope by
\begin{equation*}
\lip f(x):=\limsup_{y\to x}\frac{\abs{f(x)-f(y)}}{\dist(x,y)}\quad\text{ for every accumulation point }x\in X
\end{equation*}
and \(\lip f(x):=0\) elsewhere.
We shall use the standard notation $L^p(X,\meas)=L^p(\meas)$ for the $L^p$ spaces and $\Leb^n$ for the $n$-dimensional Lebesgue measure on $\setR^n$. We shall denote by $\omega_n$ the Lebesgue measure of the unit ball in $\setR^n$. If $f\in L^1_{\rm loc}(X,\meas)$ and $U\subset X$ is such that $0<\meas(U)< \infty$, then $\fint_Uf\di \meas$ denotes the average of $f$ over $U$.

The Cheeger energy $\Ch:L^2(X,\meas)\to[0,+\infty]$ is the convex and lower semicontinuous functional defined through
\begin{equation}\label{eq:cheeger}
\Ch(f):= \inf\left\lbrace\liminf_{n\to\infty}\int_X(\lip f_n)^2\di \meas\, :\, \quad f_n\in\Lipb(X)\cap L^2(X,\meas),\ \norm{f_n-f}_2\to 0 \right\rbrace
\end{equation}
and its finiteness domain will be denoted by $H^{1,2}(X,\dist,\meas)$, sometimes we write $H^{1,2}(X)$ omitting the dependence on $\dist$ and $\meas$ when it is clear from the context. Looking at the optimal approximating sequence in \eqref{eq:cheeger}, it is possible to identify a canonical object $\abs{\nabla f}$, called minimal relaxed slope, providing the integral representation
\begin{equation*}
\Ch(f):= \int_X\abs{\nabla f}^2\di \meas\qquad\forall f\in H^{1,2}(X,\dist,\meas)\, .
\end{equation*}

Any metric measure space such that $\Ch$ is a quadratic form is said to be \textit{infinitesimally Hilbertian} \cite{Gigli15}. Let us recall from \cite{AmbrosioGigliSavare14a,AmbrosioGigliSavare14b,Gigli13} that, under this assumption, the function 
\begin{equation*}
\nabla f_1\cdot\nabla f_2:= \lim_{\eps\to 0}\frac{\abs{\nabla(f_1+\eps f_2)}^2-\abs{\nabla f_1}^2}{2\eps}
\end{equation*}
defines a symmetric bilinear form on $H^{1,2}(X,\dist,\meas)\times H^{1,2}(X,\dist,\meas)$ with values into $L^1(X,\meas)$.

It is possible to define a Laplacian operator $\Delta:\mathcal{D}(\Delta)\subset L^{2}(X,\meas)\to L^2(X,\meas)$ in the following way. We let $\mathcal{D}(\Delta)$ be the set of those $f\in H^{1,2}(X,\dist,\meas)$ such that, for some $h\in L^2(X,\meas)$, one has
\begin{equation}\label{eq:amb1}
\int_X \nabla f\cdot\nabla g\, \di\meas=-\int_X hg\,\di\meas\qquad\forall g\in H^{1,2}(X,\dist,\meas) 
\end{equation} 
and, in that case, we put $\Delta f=h$. 
It is easy to check that the definition is well-posed and that the Laplacian is linear (because $\Ch$ is a quadratic form). 

The heat flow $P_t$ is defined as the $L^2(X,\meas)$-gradient flow of $\frac{1}{2}\Ch$. Its existence and uniqueness
follow from the Komura-Brezis theory. It can be equivalently characterized by saying that for any
$u\in L^2(X,\meas)$ the curve $t\mapsto P_tu\in L^2(X,\meas)$ is locally absolutely continuous in $(0,+\infty)$ and satisfies
\begin{equation*}
\frac{\di}{\di t}P_tu=\Delta P_tu \quad\text{for }\Leb^1\text{-a.e.\ }t\in(0,+\infty)\, ,\qquad
\lim_{t\downarrow 0}P_tu=u\quad\text{in $L^2(X,\meas)$\, .}
\end{equation*}
Under the infinitesimal Hilbertianity assumption the heat flow provides a linear, continuous and self-adjoint contraction semigroup in $L^2(X,\meas)$. Moreover $P_t$ extends to a linear, continuous and mass preserving operator,
still denoted by $P_t$, in all the $L^p$ spaces for $1\le p < +\infty$.

\subsection{About normed modules}
Here we discuss some basic concepts in the theory of normed modules (see \cite{Gigli18} after \cite{Weaver00}),
whose aim is to provide a solid functional-analytic framework where to give a notion of `vector field'.
The original approach to this matter (where normed modules were defined over a metric
space endowed with a Borel measure) is not sufficient for our purposes, since we would like to work also
with vector fields defined capacity-a.e.\ as in \cite{DebinGigliPasqualetto21}. Accordingly, we will propose
in \autoref{def:normed_mod} below a notion of normed module which unifies the two theories studied in
\cite{Gigli18} and \cite{DebinGigliPasqualetto21}.
\medskip

Given a metric space \((X,\dist)\), we denote by \(\mathscr B(X)\)
the Borel \(\sigma\)-algebra on \(X\). We say that an outer measure
\(\mu\) on \(X\) is \emph{boundedly finite} provided \(\mu(E)<+\infty\)
for every \(E\in\mathscr B(X)\) bounded. Moreover, we say that \(\mu\)
is \emph{submodular} provided
\[
\mu(E\cup F)+\mu(E\cap F)\leq\mu(E)+\mu(F)
\quad\text{ for every }E,F\in\mathscr B(X)\, .
\]
For a non-negative Borel function \(f\colon X\to[0,+\infty]\),
the integral of \(f\) with respect to \(\mu\) on a set
\(E\in\mathscr B(X)\) can be defined via Cavalieri's formula as
\[
\int_E f\,\di\mu\coloneqq\int_0^{+\infty}\mu(\{\chi_E f>t\})\,\di t\, .
\]
It holds that the integral operator \(f\mapsto\int_X f\,\di\mu\)
is subadditive, i.e.\
\[
\int_X f+g\,\di\mu\leq\int_X f\,\di\mu+\int_X g\,\di\mu
\quad\text{ for every }f,g\colon X\to[0,+\infty]\text{ Borel}\, ,
\]
if and only if \(\mu\) is submodular; for a proof we refer to \cite[Chapter 6]{Denneberg94} (see also \cite[Theorem 1.5]{DebinGigliPasqualetto21}).
We will mostly consider the cases where
\begin{equation}\label{eq:def_good_outer_meas}
\mu\quad\text{ is a boundedly finite, submodular outer measure on }(X,\dist)\, . 
\end{equation}
We will actually deal with two classes of outer measures satisfying
\eqref{eq:def_good_outer_meas}:
\begin{itemize}
\item[\(\rm i)\)] Boundedly finite Borel measures \(\meas\) on
\((X,\dist)\); by definition, \(\meas\) is defined just on
\(\mathscr B(X)\), but we tacitly adopt the same notation
for the induced outer measure obtained via Carath\'{e}odory
construction, namely we set
\[
\meas(S)\coloneqq\inf\big\{\meas(E)\,:\,E\in\mathscr B(X),
\,S\subset E\big\}\quad\text{ for every }S\subset X\, .
\]
\item[\(\rm ii)\)] The Sobolev \(2\)-capacity \(\rm Cap\) on a metric
measure space \((X,\dist,\meas)\), which is defined as
\[
{\rm Cap}(S)\coloneqq\inf\Big\{\|f\|_{H^{1,2}(X)}^2\;\Big|\;f\in
H^{1,2}(X),\,f\geq 1\;\meas\text{-a.e.\ on a neighbourhood of }S\Big\}
\]
for every subset \(S\subset X\). We know that \(\rm Cap\)
satisfies \eqref{eq:def_good_outer_meas} and \(\meas\ll{\rm Cap}\),
cf.\ \cite{DebinGigliPasqualetto21}.
\end{itemize}
Given any \(\mu\) as in \eqref{eq:def_good_outer_meas}, we define
\(L^0(\mu)\) as the space of all the equivalence classes, up to
\(\mu\)-a.e.\ equality, of Borel functions \(f\colon X\to\setR\).
We define a distance on \(L^0(\mu)\): let us fix a sequence \((A_n)_n\)
of bounded open subsets of \(X\) with \(A_n\subset A_{n+1}\)
for every \(n\in\setN\) and such that for any \(B\subset X\) bounded
it holds \(B\subset A_n\) for some \(n\in\setN\) (thus
\(X=\bigcup_n A_n\)); then we set
\[
\dist_{L^0(\mu)}(f,g)\coloneqq\sum_{n\in\setN}
\frac{1}{2^n\max\{\mu(A_n),1\}}\int_{A_n}\min\{|f-g|,1\}\,\di\mu
\quad\text{ for every }f,g\in L^0(\mu)\, .
\]
The fact that \(\dist_{L^0(\mu)}\) is a distance (more precisely,
that it verifies the triangle inequality) follows from the
submodularity of \(\mu\). By inspecting the proof of
\cite[Proposition 1.10]{DebinGigliPasqualetto21}, which was written just for the case
\(\mu={\rm Cap}\), one can realize that
\(\lim_i\dist_{L^0(\mu)}(f_i,f)=0\) if and only if
\begin{equation}\label{eq:equiv_L0_conv}
\lim_{i\to\infty}\mu\big(B\cap\{|f_i-f|>\varepsilon\}\big)=0\quad
\text{ for every }\varepsilon>0\text{ and }B\subset X\text{ bounded\, .}
\end{equation}
In particular, as in \cite[Proposition 1.12]{DebinGigliPasqualetto21},
\eqref{eq:equiv_L0_conv} implies the existence of a subsequence
\((i_j)_j\) such that \(f_{i_j}(x)\to f(x)\) holds for
\(\mu\)-a.e.\ \(x\in X\). The converse implication (which
is verified, for instance, when \(\mu\) is a Borel measure)
in general might fail, see e.g.\ \cite[Remark 1.13]{DebinGigliPasqualetto21}.
\medskip

We now introduce the notion of \(L^0(\mu)\)-normed \(L^0(\mu)\)-module
when \(\mu\) is chosen as in \eqref{eq:def_good_outer_meas}. Before
doing so, we point out that \(L^0(\mu)\) is a topological vector space
and a topological ring when endowed with the natural pointwise
operations and the complete distance \(\dist_{L^0(\mu)}\).
\begin{definition}[\(L^0(\mu)\)-normed \(L^0(\mu)\)-module]\label{def:normed_mod}
Let \((X,\dist)\) be a complete separable metric space. Let \(\mu\)
be as in \eqref{eq:def_good_outer_meas}. Let \(\mathscr M\) be an
algebraic module over the commutative ring \(L^0(\mu)\).
A \emph{pointwise norm} on \(\mathscr M\) is a mapping
\(|\cdot|\colon\mathscr M\to L^0(\mu)\) which satisfies
in the \(\mu\)-a.e.\ sense
\[\begin{split}
|v|\geq 0&\quad\text{ for every }v\in\mathscr M,\text{ with equality
if and only if }v=0,\\
|v+w|\leq|v|+|w|&\quad\text{ for every }v,w\in\mathscr M,\\
|f\cdot v|=|f||v|&\quad\text{ for every }f\in L^0(\mu)\text{ and }
v\in\mathscr M.
\end{split}\]
We say that \((\mathscr M,|\cdot|)\), or just \(\mathscr M\),
is an \emph{\(L^0(\mu)\)-normed \(L^0(\mu)\)-module} provided
the distance
\[
\dist_{\mathscr M}(v,w)\coloneqq\dist_{L^0(\mu)}(|v-w|,0)
\quad\text{ for every }v,w\in\mathscr M
\]
is complete.
\end{definition}
The above definition coincides with the ones in
\cite[Definition 1.3.1]{Gigli18} and \cite[Definition 2.1]{DebinGigliPasqualetto21}
when \(\mu\) is a Borel measure and \(\mu={\rm Cap}\), respectively.
\medskip

We define the restriction \(\mathscr M|_E\) of an
\(L^0(\mu)\)-normed \(L^0(\mu)\)-module \(\mathscr M\)
to \(E\in\mathscr B(X)\) as
\[
\mathscr M_E\coloneqq\big\{\chi_E\cdot v\,:\,v\in\mathscr M\big\}.
\]
It holds that \(\mathscr M|_E\) inherits from \(\mathscr M\)
the structure of \(L^0(\mu)\)-normed \(L^0(\mu)\)-module.
Moreover, we say that an \(L^0(\mu)\)-normed \(L^0(\mu)\)-module
\(\mathscr H\) is a \emph{Hilbert module} provided
\[
|v+w|^2+|v-w|^2=2|v|^2+2|w|^2\;\;\;\mu\text{-a.e.}
\quad\text{ for every }v,w\in\mathscr H,
\]
which we shall refer to as the \emph{pointwise parallelogram rule}.
On Hilbert modules, the formula
\[
\langle v,w\rangle\coloneqq\frac{|v+w|^2-|v|^2-|w|^2}{2}
\quad\text{ for every }v,w\in\mathscr H
\]
defines an \(L^0(\mu)\)-bilinear and continuous mapping
\(\langle\cdot,\cdot\rangle\colon\mathscr H\times\mathscr H\to L^0(\mu)\).
\begin{definition}[Upper dimension bound]\label{def:local_dim}
Let \((X,\dist)\) be a complete separable metric space.
Let \(\mu\) be as in \eqref{eq:def_good_outer_meas}. Let
\(\mathscr M\) be an \(L^0(\mu)\)-normed \(L^0(\mu)\)-module.
Fix \(E\in\mathscr B(X)\) with \(\mu(E)>0\). Then we give
the following definitions:
\begin{itemize}
\item[\(\rm i)\)] A family \(\mathcal S\subset\mathscr M\)
is said to \emph{generate} \(\mathscr M\) on \(E\) provided
\[
\bigg\{\sum_{i=1}^n(\chi_E f_i)\cdot v_i\;\bigg|\;
n\in\setN,\,(f_i)_{i=1}^n\subset L^0(\mu),\,
(v_i)_{i=1}^n\subset\mathcal S\bigg\}\quad\text{ is dense in }
\mathscr M|_E\, .
\]
When \(E=X\), we just say that \(\mathcal S\) generates \(\mathscr M\).
\item[\(\rm ii)\)] Some elements \(v_1,\ldots,v_n\in\mathscr M\)
are said to be \emph{linearly independent} on \(E\) provided
\[
(f_i)_{i=1}^n\subset L^0(\mu),\;\sum_{i=1}^n(\chi_E f_i)\cdot v_i=0
\quad\Longrightarrow\quad \chi_E f_i=0\;\;\mu\text{-a.e.\ for all }
i=1,\ldots,n\, .
\]
When this is not verified, we say that \(v_1,\ldots,v_n\)
are \emph{linearly dependent} on \(E\).
\item[\(\rm iii)\)] We say that the local dimension of \(\mathscr M\)
on \(E\) \emph{does not exceed} \(n\in\setN\) provided there exists a
generating subset \(\mathcal S\) of \(\mathscr M\) having the
following property: if \(v_1,\ldots,v_{n+1}\in\mathcal S\)
and \(B\in\mathscr B(X)\) satisfies \(B\subset E\) and \(\mu(B)>0\), then
\(v_1,\ldots,v_{n+1}\) are linearly dependent on \(B\).
\end{itemize}
\end{definition}
On Hilbert normed modules, the linear independence can be checked
in the following way:
\begin{lemma}\label{lemma:matrixlin}
Let \((X,\dist)\) be a complete separable metric space. Let \(\mu\) be
as in \eqref{eq:def_good_outer_meas}. Let \(\mathscr H\) be a Hilbert
\(L^0(\mu)\)-normed \(L^0(\mu)\)-module. Let \(E\in\mathscr B(X)\)
satisfy \(\mu(E)>0\). Then \(v_1,\ldots,v_n\in\mathscr H\) are linearly
independent on \(E\) if and only if the matrix
\[
A_{ij}\coloneqq\langle v_i,v_j\rangle
\]
is invertible \(\mu\)-almost everywhere on \(E\).
\end{lemma}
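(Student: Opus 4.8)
The plan is to prove both implications of the equivalence, working $\mu$-almost everywhere on $E$ and exploiting the pointwise inner product structure on the Hilbert module $\mathscr H$.

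First I would address the direction ``$A$ invertible $\mu$-a.e. on $E$ $\Longrightarrow$ $v_1,\dots,v_n$ linearly independent on $E$''. Suppose $(f_i)_{i=1}^n\subset L^0(\mu)$ satisfy $\sum_{i=1}^n(\chi_E f_i)\cdot v_i=0$. Taking the pointwise inner product with $v_j$ and using $L^0(\mu)$-bilinearity of $\langle\cdot,\cdot\rangle$, we obtain $\sum_{i=1}^n (\chi_E f_i)\,A_{ij}=0$ $\mu$-a.e. for each $j=1,\dots,n$; that is, the $L^0(\mu)$-vector $(\chi_E f_1,\dots,\chi_E f_n)$ lies in the kernel of the matrix $A$ $\mu$-a.e. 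Since $A$ is invertible $\mu$-a.e. on $E$ (and $\chi_E f_i$ vanishes outside $E$ anyway), multiplying by the $\mu$-a.e.\ defined $L^0(\mu)$-valued inverse matrix $A^{-1}$ forces $\chi_E f_i=0$ $\mu$-a.e.\ for all $i$, which is exactly linear independence on $E$.

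For the converse, I would argue by contraposition: if $A$ is \emph{not} invertible $\mu$-a.e. on $E$, then the Borel set $B\coloneqq\{x\in E:\det A(x)=0\}$ has $\mu(B)>0$. The idea is to produce, by a measurable selection, functions $f_i\in L^0(\mu)$ supported in $B$, not all $\mu$-a.e.\ zero on $B$, with $(f_1(x),\dots,f_n(x))\in\ker A(x)$ for $\mu$-a.e.\ $x\in B$; then $\sum_i(\chi_B f_i)\cdot v_i$ has zero pointwise norm $\mu$-a.e.\ (its square is $\sum_{i,j}f_i f_j A_{ij}=0$), hence equals $0$ by the definiteness axiom of the pointwise norm, witnessing linear dependence on $B\subset E$. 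Concretely one can partition $B$ according to the rank $r<n$ of $A(x)$ and, on each piece, according to which $r\times r$ minor is invertible; on the piece where, say, the top-left $r\times r$ block $A'(x)$ is invertible, set $f_{r+1}\coloneqq\chi$ of that piece, let $(f_1,\dots,f_r)^\top\coloneqq -A'(x)^{-1}(A_{1,r+1},\dots,A_{r,r+1})^\top$ (a Borel function of $x$ since matrix inversion is Borel), and $f_{r+2}=\dots=f_n=0$; this gives a nonzero element of $\ker A(x)$. Summing the contributions over the (countably many) pieces yields the desired $f_i$.

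The main obstacle I expect is the measurable-selection step in the converse: one must check that the decomposition of $B$ into the pieces described is a Borel decomposition (which follows since $x\mapsto A(x)$ is Borel and rank and minor-invertibility are Borel conditions) and that on each piece the selected $f_i$ are genuinely Borel and the resulting infinite sum $\sum_i(\chi_{B_j}f_i)\cdot v_i$ over the pieces is well-defined in $\mathscr M|_E$ — this is fine because the pieces are disjoint, so only one term is nonzero at each point and no convergence issue arises. A minor additional point is to record that $\langle\cdot,\cdot\rangle$ being $L^0(\mu)$-bilinear and continuous (as stated just before the lemma) legitimises all the pointwise algebraic manipulations with elements of $L^0(\mu)$ as coefficients, and that $|w|^2=\langle w,w\rangle=0$ $\mu$-a.e.\ implies $w=0$ in $\mathscr M$ by the definiteness of the pointwise norm. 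Everything else is routine linear algebra carried out pointwise $\mu$-a.e.
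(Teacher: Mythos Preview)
Your proposal is correct and follows essentially the same approach as the paper. The paper's proof is slightly more terse: in the forward direction it uses the identity \(|\sum_i f_i v_i|^2=\sum_{i,j}A_{ij}f_if_j\) together with the fact that \(A\), being a Gram matrix, is positive semidefinite (so invertible \(\Rightarrow\) positive definite), rather than inverting \(A\) explicitly; in the converse direction it simply asserts the existence of the nontrivial kernel functions \(g_i\) without spelling out the measurable selection that you carefully detail via the rank-and-minor stratification.
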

\begin{proof}
Let us notice that in general $A_{ij}$ is a symmetric nonnegative matrix $\mu$-a.e.\ on $E$.

Let us consider functions $f_1,\dots ,f_n\in L^0(\mu)$ and compute
\begin{equation}\label{eq:normcom}
\abs{\sum_{i=1}^nf_iv_i}^2=\sum_{i,j=1}^nA_{ij}f_if_j\, ,\quad\text{$\mu$-a.e.\ on $E$}\, .
\end{equation}
If we assume that $A_{ij}$ is invertible $\mu$-a.e.\ on $E$ and $\sum_{i=1}^nf_iv_i=0$ $\mu$-a.e.\ on $E$,
then $A_{ij}$ is positive definite $\mu$-a.e.\ on $E$ and from \eqref{eq:normcom} it follows that 
$f_i=0$ holds $\mu$-a.e.\ on $E$ for any $i=1,\dots, n$. Hence $v_1,\dots,v_n$ are linearly independent on $E$.

Vice versa, if $v_1,\dots,v_n$ are linearly independent on $E$, and we suppose that $A_{ij}$ is singular on $B\subset E$ such that $\mu(B)>0$,
then we can find functions $g_1,\dots ,g_n\in L^0(\mu)$ such that $\sum_{i=1}^ng_i^2>0$ $\mu$-a.e.\ on $B$, while $\sum_{i,j=1}^nA_{ij}g_ig_j=0$
$\mu$-a.e.\ on $B$. Hence $\sum_{i=1}^n g_iv_i=0$ $\mu$-a.e.\ on $B$, a contradiction with the linear independence of $v_1,\dots,v_n$ on $E$.
\end{proof}

In the case where \(\mu\) is a Borel measure, the above notions
of generating set and linear (in)dependence are consistent with
those introduced in \cite[Definitions 1.4.1 and 1.4.2]{Gigli18}.
In that case, they lead to a natural notion of local dimension:
one says that \(\mathscr M\) has \emph{local dimension} equal to \(n\)
on \(E\) if there exists a \emph{local basis}
\(v_1,\ldots,v_n\in\mathscr M\) for \(\mathscr M\) on \(E\),
i.e.\ \(v_1,\ldots,v_n\) are linearly independent on \(E\)
and \(\{v_1,\ldots,v_n\}\) generates \(\mathscr M\) on \(E\);
cf.\ \cite[Definition 1.4.3]{Gigli18}.
This notion of local dimension is well-posed, because any two
local bases on a given Borel set must have the same cardinality,
see \cite[Proposition 1.4.4]{Gigli18}. Once the concept of local
dimension is established, it is possible to get the \emph{dimensional
decomposition} of \(\mathscr M\) \cite[Proposition 1.4.5]{Gigli18}:
there exists a Borel partition
\(\{D_n(\mathscr M)\}_{n\in\setN\cup\{\infty\}}\) of \(X\),
unique up to \(\mu\)-a.e.\ equality,
having the property that \(\mathscr M\) has local dimension equal
to \(n\) on \(D_n(\mathscr M)\) for every \(n\in\setN\), while
\(\mathscr M\) is not finitely-generated on any Borel subset
of \(D_\infty(\mathscr M)\) having positive \(\mu\)-measure.\\
The following consistency check is in order. For simplicity,
we just focus on Hilbert normed modules as this is sufficient
for our purposes, but the Hilbertianity assumption could be dropped.
\begin{lemma}\label{lem:equiv_dim}
Let \((X,\dist,\meas)\) be a metric measure space. Let \(\mathscr H\)
be a Hilbert \(L^0(\meas)\)-normed \(L^0(\meas)\)-module.
Then the local dimension of \(\mathscr H\) on \(E\in\mathscr B(X)\) does not exceed
\(n\in\setN\) if and only if
\begin{equation}\label{eq:equiv_dim}
\meas(E\cap D_k(\mathscr H))=0\quad\text{ for every }
k\in\setN\cup\{\infty\}\text{ with }k>n\, .
\end{equation}
\end{lemma}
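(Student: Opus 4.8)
The plan is to unwind both sides of the claimed equivalence against the dimensional decomposition $\{D_k(\mathscr H)\}_{k\in\setN\cup\{\infty\}}$ of $\mathscr H$ (in the sense of \cite[Proposition 1.4.5]{Gigli18}), which exists precisely because $\mu=\meas$ is a Borel measure. First I would prove the implication ``\eqref{eq:equiv_dim} $\Rightarrow$ local dimension $\le n$ on $E$''. Assume $\meas(E\cap D_k(\mathscr H))=0$ for all $k>n$; then $E$ is $\meas$-equivalent to $\bigcup_{k\le n}(E\cap D_k(\mathscr H))$. On each piece $E\cap D_k(\mathscr H)$ with $k\le n$ there is, by definition of the dimensional decomposition, a local basis $v_1^{(k)},\dots,v_k^{(k)}$ for $\mathscr H$ on $D_k(\mathscr H)$, hence also on $E\cap D_k(\mathscr H)$. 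Gluing these by the $L^0(\meas)$-module structure — i.e.\ taking the countable family $\mathcal S=\{\chi_{D_k(\mathscr H)}\cdot v_i^{(k)} : k\le n,\ 1\le i\le k\}$ — one gets a generating subset of $\mathscr H$ on $E$. To check the defining property of ``local dimension does not exceed $n$'', pick any $n+1$ elements $w_1,\dots,w_{n+1}$ from $\mathcal S$ and any Borel $B\subset E$ with $\meas(B)>0$; by pigeonhole at least two of the $w_i$ come from the same level $k\le n$, but actually the cleaner argument is that $B$ meets some $D_k(\mathscr H)$, $k\le n$, in positive measure, and on that set all elements of $\mathcal S$ supported there lie in a module of local dimension $k\le n$, while the ones supported on other levels vanish there; hence the $n+1$ vectors $w_1,\dots,w_{n+1}$, restricted to $B\cap D_k(\mathscr H)$, are $n+1$ vectors in a module of local dimension $k\le n$ and so are linearly dependent on a further positive-measure subset. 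One has to be a little careful here: linear dependence on $B$ is witnessed by nonzero $L^0$-coefficients that may be supported only on part of $B$, but that is exactly what Definition \ref{def:local_dim}(ii)–(iii) permits, so it suffices to produce dependence on any positive-measure Borel subset of $B$.

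For the converse, assume the local dimension of $\mathscr H$ on $E$ does not exceed $n$, and suppose for contradiction that $\meas(E\cap D_k(\mathscr H))>0$ for some $k>n$ (including $k=\infty$). Set $B:=E\cap D_k(\mathscr H)$. If $k$ is finite, then by definition $\mathscr H$ has a local basis of $k$ linearly independent elements $v_1,\dots,v_k$ on $D_k(\mathscr H)$, hence on $B$; in particular any $n+1\le k$ of them, say $v_1,\dots,v_{n+1}$, are linearly independent on $B$. But Definition \ref{def:local_dim}(iii) furnishes a generating set $\mathcal S$ for $\mathscr H$ on $E$ with the property that any $n+1$ of its elements are linearly dependent on every positive-measure Borel subset of $E$; since $\{v_1,\dots,v_k\}$ generates on $B$ and $\mathcal S$ generates on $E\supset B$, one can approximate each $v_i$ in $\mathscr H|_B$ by $L^0$-combinations of elements of $\mathcal S$ and, using \autoref{lemma:matrixlin} (invertibility of the Gram matrix $\langle v_i,v_j\rangle$ $\meas$-a.e.\ on $B$), transfer the linear independence of $v_1,\dots,v_{n+1}$ to $n+1$ suitably chosen elements of $\mathcal S$ on a positive-measure subset of $B$ — contradicting the defining property of $\mathcal S$. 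If $k=\infty$, the same approximation argument produces, on a positive-measure subset of $B$, arbitrarily many linearly independent elements drawn from $\mathcal S$, again contradicting that any $n+1$ elements of $\mathcal S$ are linearly dependent on every positive-measure subset of $E$. The technical heart of both directions is this passage from ``abstract'' linear (in)dependence of a local basis to linear (in)dependence of elements of the prescribed generating set $\mathcal S$; I expect that to be the main obstacle, and the right tool for it is precisely \autoref{lemma:matrixlin} together with the fact that on a Hilbert module invertibility of the Gram matrix is stable under $L^0$-approximation of the vectors (so a genuinely invertible Gram matrix on a positive-measure set persists after replacing the $v_i$ by close enough $L^0$-combinations of elements of $\mathcal S$, after possibly shrinking the set).

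Finally I would record the bookkeeping lemma implicitly used above: for an $L^0(\meas)$-normed module, being ``linearly dependent on $B$'' is the negation of ``linearly independent on $B$'', and linear independence localizes, i.e.\ if $v_1,\dots,v_m$ are linearly independent on $B$ then they are linearly independent on every Borel $B'\subset B$ with $\meas(B')>0$; conversely linear dependence on $B$ yields, after restricting the defining coefficients to their support, linear dependence on some positive-measure Borel subset of $B$. These two facts, combined with the countable additivity used to split $E$ along $\{D_k(\mathscr H)\}$, make the two implications go through without any further input beyond \cite[Propositions 1.4.4 and 1.4.5]{Gigli18} and \autoref{lemma:matrixlin}.
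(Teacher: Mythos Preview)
Your plan is correct and follows the same strategy as the paper's proof: sufficiency by gluing local bases on the pieces $D_k(\mathscr H)$, $k\le n$, into a generating family $\mathcal S$ on $E$ (the paper glues into a single $n$-tuple $v_1,\dots,v_n$ rather than your indicator-weighted family, but the verification is the same); necessity by finding orthonormal $w_1,\dots,w_{n+1}$ on $B=E\cap D_k(\mathscr H)$ for some $k>n$ and transferring their independence to $n+1$ elements of the given $\mathcal S$.

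The only place where you are vaguer than the paper is that transfer step. Your stability argument produces $n+1$ \emph{$L^0$-combinations} of elements of $\mathcal S$ with invertible Gram matrix on a smaller set, but the defining property of $\mathcal S$ in \autoref{def:local_dim}(iii) speaks of $n+1$ elements \emph{of} $\mathcal S$, and you do not say how to bridge this. The paper handles it by first replacing $\mathcal S$ with its $\mathbb R$-linear span (a harmless reduction: the condition ``any $n+1$ elements have singular Gram matrix $\meas$-a.e.\ on $E$'' passes to the span, since the Gram matrix of finitely many generators then has rank $\le n$ a.e.), and then using Egorov to approximate each $w_i$ uniformly on a compact $K\subset B$ of positive measure by some $v_i\in\mathcal S$; an explicit quadratic-form estimate shows the $v_i$ are independent on $K$. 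Your route also closes once you add the missing observation: if the approximating combinations use finitely many $s_1,\dots,s_m\in\mathcal S$, then on your shrunken set the $m\times m$ Gram matrix of the $s_j$ has pointwise rank $\ge n+1$, so by pigeonhole some fixed $(n+1)$-tuple among the $s_j$ has invertible Gram on a further positive-measure subset, and \autoref{lemma:matrixlin} gives the contradiction.
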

\begin{proof}
To prove necessity, we argue by contradiction: suppose to have a Borel set
\(B\subset E\) with \(\meas(B)>0\) such that \(\mathscr H\)
has dimension at least \(n+1\) on \(B\). Then we can find
\(w_1,\ldots,w_{n+1}\in\mathscr H\) which are orthonormal
on \(B\), namely \(\langle w_i,w_j\rangle=\delta_{ij}\) holds
\(\meas\)-a.e.\ on \(B\) for all \(i,j=1,\ldots,n+1\). Fix a family \(\mathcal S\subset\mathscr H\)
which realizes the upper local dimension bound of \(\mathscr H\) on \(E\). Without loss of generality,
we may assume that \(\mathcal S\) is a linear subspace of \(\mathscr H\). Thanks to Egorov theorem,
we can find a compact set \(K\subset B\) with \(\meas(K)>0\) and elements
\(v_1,\ldots,v_{n+1}\in\mathcal S\) such that
\(|v_i-w_i|\leq\varepsilon\) holds \(\meas\)-a.e.\ on \(K\) for every
\(i=1,\ldots,n+1\), where \(\varepsilon>0\) is chosen so that
\((1-2\eps - \eps^2)-n(\varepsilon^2+2\varepsilon)\geq\frac{1}{2}\).
Notice that
\begin{equation}\label{eq:lin_ind_aux}
\big|\langle v_i,v_j\rangle-\langle w_i,w_j\rangle\big|
\leq|v_i-w_i||v_j|+|w_i||v_j-w_j|\leq
\varepsilon\big(|w_j|+\varepsilon\big)+|w_i|\varepsilon
=\varepsilon^2+2\varepsilon\, ,
\end{equation}
holds \(\meas\)-a.e.\ on \(K\) for every \(i,j=1,\ldots,n+1\). Given any \(f_1,\ldots,f_{n+1}\in L^0(\meas)\), it holds that
\[\begin{split}
\bigg|\sum_{i=1}^{n+1}f_i v_i\bigg|^2
&=\sum_{i=1}^{n+1}f_i^2|v_i|^2+\sum_{i\neq j}f_i f_j\langle v_i,v_j\rangle
\overset{\eqref{eq:lin_ind_aux}}\geq
(1-2\eps - \eps^2)\sum_{i=1}^{n+1}f_i^2-(\varepsilon^2+2\varepsilon)
\sum_{i\neq j}|f_i||f_j|\\
&\geq(1-2\eps - \eps^2)\sum_{i=1}^{n+1}|v_i|^2
-\frac{1}{2}(\varepsilon^2+2\varepsilon)\sum_{i\neq j}\big(f_i^2+f_j^2\big)\\
&=\big[(1-2\eps - \eps^2)-n(\varepsilon^2+2\varepsilon)\big]
\sum_{i=1}^{n+1}f_i^2\geq\frac{1}{2}\sum_{i=1}^{n+1}f_i^2\, ,
\end{split}\]
\(\meas\)-a.e.\ on \(K\). In particular, if \(\chi_K\sum_{i=1}^{n+1}
f_iv_i=0\), then \(f_1=\ldots=f_{n+1}=0\) \(\meas\)-a.e.\ on \(K\),
which shows that \(v_1,\ldots,v_{n+1}\) are linearly independent on \(K\).
This leads to a contradiction, proving \eqref{eq:equiv_dim}.

In order to prove sufficiency, suppose \eqref{eq:equiv_dim} holds. Then we can find elements
\(v_1,\ldots,v_n\in\mathscr H\) such that for any \(k\leq n\) it holds that \(v_1,\ldots,v_k\)
is a local basis for \(\mathscr H\) on \(E\cap D_k(\mathscr H)\). Therefore, since the set
\(\bigcup_{k\leq n}D_k(\mathscr H)\) covers \(\meas\)-a.a.\ of \(E\) by assumption,
choosing \(\mathcal S\coloneqq\{v_1,\ldots,v_n\}\) it is clear that the local dimension of
\(\mathscr H\) on \(E\) does not exceed \(n\), as required.
\end{proof}

\begin{remark}
It is not clear to us whether a dimensional decomposition
can be built for all \(L^0(\mu)\)-normed \(L^0(\mu)\)-modules,
when \(\mu\) is as in \eqref{eq:def_good_outer_meas} but is not
a Borel measure. One difficulty is, for instance, the fact that
we would need to take \(\mu\)-essential unions; it is not clear
how to do it (in the case where \(\mu\) is the \(2\)-capacity,
this is pointed out in \cite{DebinGigliPasqualetto21}). However, this is not really
relevant for our purposes, since on \(L^0({\rm Cap})\)-normed
\(L^0({\rm Cap})\)-modules we are just interested to upper local
dimension bounds, thus we will not investigate further in this
direction.
\end{remark}

Let \((X,\dist,\meas)\) be a metric measure space and
\(\mu\) be as in \eqref{eq:def_good_outer_meas}. Then we say that
\(\meas\) is \emph{absolutely continuous} with respect to \(\mu\),
shortly \(\meas\ll\mu\), provided it holds that \(\meas(N)=0\)
for every \(N\in\mathscr B(X)\) with \(\mu(N)=0\).
When \(\mu\) is a Borel measure, this notion coincides with
the usual absolute continuity.\\ 
Assuming that \(\meas\ll\mu\), we have
a natural projection map \(\pi_\meas\colon L^0(\mu)\to L^0(\meas)\),
which can be characterized as follows:
\[
\pi_\meas([f]_\mu)=[f]_\meas\quad\text{ for every }
f\colon X\to\setR\text{ Borel.}
\]
One can readily check that the operator \(\pi_\meas\) is
well-defined, linear and continuous. It also induces projection
maps at the level of normed modules, as we are going to describe.\\
Given an \(L^0(\mu)\)-normed \(L^0(\mu)\)-module \(\mathscr M\),
we introduce the following equivalence relation on \(\mathscr M\):
given any \(v,w\in\mathscr M\), we declare that \(v\sim_\meas w\) if
and only if \(\pi_\meas(|v-w|)=0\) holds \(\meas\)-a.e.\ on \(X\).
The resulting quotient space
\(\mathscr M_\meas\coloneqq\mathscr M/\sim_\meas\)
is an \(L^0(\meas)\)-normed \(L^0(\meas)\)-module with respect
to the natural pointwise operations. We denote again by
\(\pi_\meas\colon\mathscr M\to\mathscr M_\meas\) the canonical
projection map, which is linear and continuous.\\ 
Observe that
\[
\mathscr H\;\text{ Hilbert }L^0(\mu)\text{-normed }
L^0(\mu)\text{-module}\;\;\Longrightarrow\;\;
\mathscr H_\meas\;\text{ Hilbert }L^0(\meas)\text{-normed }
L^0(\meas)\text{-module}.
\]
This can be readily checked by just applying \(\pi_\meas\)
to the pointwise parallelogram rule for \(\mathscr H\).
\medskip

As one might expect, an upper local dimension bound passes to
the quotient:
\begin{lemma}\label{lem:local_dim_drops}
Let \((X,\dist,\meas)\) be a metric measure space and let \(\mu\) be as in \eqref{eq:def_good_outer_meas}. Suppose \(\meas\ll\mu\).
Let \(\mathscr H\) be a Hilbert \(L^0(\mu)\)-normed \(L^0(\mu)\)-module
whose local dimension on \(E\in\mathscr B(X)\) does not exceed \(n\in\setN\).
Then the local dimension of \(\mathscr H_\meas\) on \(E\) does not exceed \(n\).
\end{lemma}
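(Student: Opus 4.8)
The statement to prove is \autoref{lem:local_dim_drops}: if $\mathscr H$ is a Hilbert $L^0(\mu)$-module whose local dimension on $E$ does not exceed $n$, and $\meas\ll\mu$, then the local dimension of $\mathscr H_\meas$ on $E$ does not exceed $n$. The natural approach is to transport a generating set realizing the upper dimension bound through the projection $\pi_\meas\colon\mathscr H\to\mathscr H_\meas$ and check that the pushed-forward set still realizes the bound. So, first I would fix a family $\mathcal S\subset\mathscr H$ that generates $\mathscr H$ and witnesses the bound: whenever $v_1,\dots,v_{n+1}\in\mathcal S$ and $B\in\mathscr B(X)$ with $B\subset E$, $\mu(B)>0$, the elements are linearly dependent on $B$. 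Set $\mathcal S_\meas\coloneqq\pi_\meas(\mathcal S)\subset\mathscr H_\meas$. Two things must be verified: (a) $\mathcal S_\meas$ generates $\mathscr H_\meas$ on $E$, and (b) any $n+1$ elements of $\mathcal S_\meas$ are linearly dependent on every Borel $B\subset E$ with $\meas(B)>0$.

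For (a): the projection $\pi_\meas$ is linear and continuous and, crucially, surjective with $\pi_\meas(\mathscr M)=\mathscr M_\meas$ dense (in fact equal, by construction of the quotient). Since finite $L^0(\mu)$-linear combinations of elements of $\mathcal S$ with $\chi_E$-factors are dense in $\mathscr M|_E=\mathscr M_E$, applying the continuous surjection $\pi_\meas$ and using $\pi_\meas(\chi_E f\cdot v)=\chi_E\,\pi_\meas(f)\cdot\pi_\meas(v)$ together with surjectivity of $\pi_\meas\colon L^0(\mu)\to L^0(\meas)$, one gets density of the corresponding combinations of $\mathcal S_\meas$ in $(\mathscr H_\meas)|_E$. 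This step is essentially formal bookkeeping about the projection respecting the module structure and the $\chi_E$-restriction.

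For (b), which is the heart of the matter: fix $\bar v_1,\dots,\bar v_{n+1}\in\mathcal S_\meas$, say $\bar v_i=\pi_\meas(v_i)$ with $v_i\in\mathcal S$, and a Borel $B\subset E$ with $\meas(B)>0$. I want to produce $\bar g_1,\dots,\bar g_{n+1}\in L^0(\meas)$, not all zero on $B$, with $\chi_B\sum_i\bar g_i\bar v_i=0$. Here I would use the Hilbert-module criterion \autoref{lemma:matrixlin}: linear dependence of $v_1,\dots,v_{n+1}$ on a set is equivalent to the Gram matrix $A_{ij}=\langle v_i,v_j\rangle$ being singular on a positive-measure subset. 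The assumption gives: on \emph{every} $\mu$-positive Borel subset of $E$, $\det A=0$ $\mu$-a.e.; equivalently $\det A=0$ holds $\mu$-a.e.\ on all of $E$ (if $\det A>0$ on a $\mu$-positive set, that set would contradict the witnessing property). Since $\meas\ll\mu$ and $\pi_\meas(\langle v_i,v_j\rangle)=\langle\bar v_i,\bar v_j\rangle$ (the inner product is defined by the parallelogram-type formula, which $\pi_\meas$ preserves), the Gram matrix $\bar A_{ij}=\langle\bar v_i,\bar v_j\rangle$ of the projected elements satisfies $\det\bar A=0$ $\meas$-a.e.\ on $E$, hence on $B$. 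Now I invoke \autoref{lemma:matrixlin} in the module $\mathscr H_\meas$ over $\meas$: since $\bar A$ is singular on $B$ with $\meas(B)>0$, the elements $\bar v_1,\dots,\bar v_{n+1}$ are linearly dependent on $B$. This gives exactly the required property for $\mathcal S_\meas$, so the local dimension of $\mathscr H_\meas$ on $E$ does not exceed $n$.

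**Main obstacle.** The only genuinely delicate point is passing from "$\det A = 0$ on every $\mu$-positive subset of $E$" to "$\det A = 0$ $\mu$-a.e.\ on $E$," and then relating $\det A$ (a function in $L^0(\mu)$ built from the pointwise inner products $\langle v_i,v_j\rangle$) to $\det\bar A$ via $\pi_\meas$ — one must check the inner product and hence the determinant commute with the projection, which follows from $\pi_\meas$ being a continuous ring homomorphism on $L^0$ and an $L^0$-module morphism preserving the pointwise norm, so that $\pi_\meas(\langle v,w\rangle)=\langle\pi_\meas v,\pi_\meas w\rangle$. Everything else is a routine transfer of the module-theoretic definitions through $\pi_\meas$, and the rank/Gram-matrix characterization \autoref{lemma:matrixlin} does the real work on both sides.
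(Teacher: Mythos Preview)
Your proposal is correct and takes essentially the same approach as the paper: push the generating set through $\pi_\meas$ and use the Gram-matrix criterion \autoref{lemma:matrixlin} on both sides, exploiting that $\meas(B)>0$ forces $\mu(B)>0$. The only cosmetic difference is that the paper argues step (b) by contradiction (assume the projected vectors are independent on some $B$, extract via \autoref{lemma:matrixlin} a Borel $F\subset B$ with $\meas(F)>0$ where a fixed Borel representative of $\det A$ is nonzero, note $\mu(F)>0$, and conclude the preimages are independent on $F$), whereas you first establish $\det A=0$ $\mu$-a.e.\ on $E$ and then project; these are logically equivalent, and the paper omits your generating step (a) as routine.
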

\begin{proof}
Let \(\mathcal S\) be a generating subset of \(\mathscr H\) on \(E\) realizing
its upper local dimension bound and consider \(\mathcal S_\meas
\coloneqq\pi_\meas(\mathcal S)\). Fix any \(B\in\mathscr B(X)\) with \(B\subset E\) and
\(\meas(B)>0\), and fix any \(v_1,\ldots,v_{n+1}\in\mathcal S_\meas\). Pick
elements \(w_1,\ldots,w_{n+1}\in\mathcal S\) such that
\(v_i=\pi_\meas(w_i)\). Also, fix any Borel \(\mu\)-a.e.\ representative
\(\bar A_{ij}\colon X\to\setR\) of \(\langle w_i,w_j\rangle\),
thus in particular \(\bar A_{ij}\) is an \(\meas\)-a.e.\ representative
of \(\langle v_i,v_j\rangle=\pi_\meas(\langle w_i,w_j\rangle)\).
We argue by contradiction: suppose \(v_1,\ldots,v_{n+1}\) are linearly
independent on \(B\). Then \autoref{lemma:matrixlin} yields the
existence of a Borel set \(F\subset B\) with \(\meas(F)>0\) (and thus
\(\mu(F)>0\)) such that
\[
{\rm det}\big(\bar A_{ij}(x)\big)_{i,j}\neq 0\quad
\text{ for every }x\in F\, .
\]
Again by \autoref{lemma:matrixlin}, this implies that
\(w_1,\ldots,w_{n+1}\) are linearly independent on \(F\),
thus leading to a contradiction. Therefore, the local dimension
of \(\mathscr H_\meas\) on \(E\) does not exceed \(n\), as required.
\end{proof}

\subsection{\texorpdfstring{$\RCD(K,N)$}{RCD} metric measure spaces and second order calculus}

We assume the reader to be familiar with the language of \(\RCD(K,N)\) spaces
and the notion of pointed measured Gromov--Hausdorff convergence (often
abbreviated to pmGH).

We recall that any sequence \((X_n,\dist_n,\meas_n,x_n)\), \(n\in\setN\) of pointed \(\RCD(K,N)\) spaces such that $\meas_n(B_1(x_n))$ are uniformly bounded and uniformly bounded away from $0$ converges, up to the extraction of a subsequence, to some pointed \(\RCD(K,N)\) space \((X,\dist,\meas,x)\) with respect to the pmGH-topology.	This follows from a compactness argument due to Gromov and the stability of the \(\RCD(K,N)\) condition.

We will repeatedly rely on the convergence and stability properties of function spaces and functions along sequences of $\RCD(K,N)$ metric measure spaces converging in the pointed measured Gromov-Hausdorff sense. We refer to \cite{GigliMondinoSavare15,AmbrosioHonda17} for the basic background about this subject.

\medskip

Gigli in \cite{Gigli18} has developed a second order calculus for $\RCD(K,\infty)$ metric measure spaces, we briefly review the main concepts to fix the notation.

Following \cite{Savare14} we define the space of \emph{test functions} on an \(\RCD(K,\infty)\) space
\((X,\dist,\meas)\) as
\[
{\rm Test}(X)\coloneqq\big\{f\in\mathcal D(\Delta)\cap L^\infty(X)\,:\,|\nabla f|\in L^\infty(X),\,\Delta f\in H^{1,2}(X)\big\} \, .
\]
Test functions are dense in \(H^{1,2}(X)\) and it holds \(\langle\nabla f,\nabla g\rangle\in H^{1,2}(X)\) whenever
\(f,g\in{\rm Test}(X)\).

The \emph{tangent module} \(L^0(TX)\) and the \emph{gradient} \(\bar\nabla\colon{\rm Test}(X)\to L^0(TX)\)
are characterized as follows. The space \(L^0(TX)\) is the Hilbert \(L^0(\meas)\)-normed \(L^0(\meas)\)-module generated by the image of \(\bar\nabla\), while \(\bar\nabla\) is the unique linear operator such that \(|\bar\nabla f|\) coincides \(\meas\)-a.e.\ with \(|\nabla f|\) for every \(f\in{\rm Test}(X)\).
The tangent module $L^2(TX)\subset L^0(TX)$ is the subset of those $v\in L^0(TX)$ such that $|v|\in L^2(\meas)$. We denote by $L^2(T^*X)$ the {\it cotangent module} over $(X,\dist,\meas)$, which is the dual module of $L^2(TX)$.

We recall the notion of \emph{Hessian} of a test function \cite{Gigli18}:
given \(f\in{\rm Test}(X)\), we denote by \({\rm Hess}(f)\) the unique element
of the tensor product \(L^2(T^*X)\otimes L^2(T^*X)\)
(cf.\ \cite[Section 1.5]{Gigli18}) such that
\begin{equation}\label{eq:hess}
\begin{split}
	&\,2\int h\,{\rm Hess}(f)(\nabla g_1\otimes\nabla g_2)\,\di \meas\\
	=&-\int\nabla f\cdot\nabla g_1\,{\rm div}(h\nabla g_2)
	+\nabla f\cdot\nabla g_2\,{\rm div}(h\nabla g_1)
	+h\nabla f\cdot\nabla(\nabla g_1\cdot\nabla g_2)\,\di \meas
\end{split}
\end{equation}
holds for every \(h,g_1,g_2\in{\rm Test}(X)\). The pointwise norm
\(\big|{\rm Hess}(f)\big|\) of \({\rm Hess}(f)\) belongs to \(L^2(\meas)\).

The space $H^{2,2}(X,\dist,\meas)\subset H^{1,2}(X,\dist,\meas)$ is defined by taking the closure of $\Test(X)$ with respect to the norm
\[
	\norm{f}_{H^{2,2}}^2 
	= \norm{f}_{H^{1,2}}^2 + \norm{|\Hess(f)|}_{L^2}^2 \, ,
\]
see \cite[Definition 3.3.1, Definition 3.3.17]{Gigli18}. Let us recall that, as proved in \cite[Proposition 3.3.18]{Gigli18}, we have the inclusion
\begin{equation}\label{eq:dominioLaplacianoH22}
	\mathcal D(\Delta)\subset H^{2,2}(X,\dist,\meas)\, .
\end{equation}

To define $H^{1,2}_C(TX)\subset L^2(TX)$, the space of vector fields with covariant derivative in $L^2$, we follow a similar path. We first introduce the class of test vector fields
\[
	\TestV(X) \coloneqq\left\lbrace \sum_{k=1}^m g_k\nabla f_k \, : \, f_k,g_k\in \Test(X)\, ,\, \, m\in \setN \right\rbrace \, ,
\]
and, employing an identity analogous to \eqref{eq:hess}, we define the covariant derivative $\nabla v$ for any $v\in \TestV(X)$. The Sobolev space $H^{1,2}_C(TX)$ is obtained by taking the closure of $\TestV(X)\subset L^2(TX)$ with respect to the norm
\begin{equation}\label{eq:H12}
	\norm{v}_{H^{1,2}_C(TX)}^2:= \norm{v}_{L^2(TX)}^2 + \norm{|\nabla v|}_{L^2}^2 \, ,
\end{equation}
see \cite[Definition 3.4.1, Definition 3.4.3]{Gigli18}.
\medskip

A powerful tool in the study of non smooth spaces with lower Ricci curvature bounds are the so-called (harmonic) $\delta$-splitting maps. Their use goes back to the seminal works of Cheeger-Colding in the Nineties and more recently they have been employed by the authors in the study of sets of finite perimeter on $\RCD$ spaces in \cite{BruePasqualettoSemola19}. We refer to \cite{BrueNaberSemola20} and \cite{CheegerJiangNaber21} for the sharpest statements available up to now for splitting maps on $\RCD$ and Ricci limit spaces, respectively, even though for the sake of this note the results of \cite{BruePasqualettoSemola19} will be sufficient.

\begin{definition}[Splitting map]\label{def:splitting maps}
	Let \((X,\dist,\meas)\) be an \(\RCD(-1,N)\) space. Let \(x\in X\) and \(\delta>0\)
	be given. Then a map \(u=(u_1,\ldots,u_k)\colon B_r(x)\to\setR^k\) is said to be
	a \emph{\(\delta\)-splitting map} provided:
	\begin{itemize}
		\item[\(\rm i)\)] \(u_a\colon B_r(x)\to\setR\) is harmonic and \(C(N)\)-Lipschitz
		for every \(a=1,\ldots,k\),
		\item[\(\rm ii)\)] \(r^2\fint_{B_r(x)}\big|{\rm Hess}(u_a)\big|^2\,\di \meas\leq\delta\)
		for every \(a=1,\ldots,k\),
		\item[\(\rm iii)\)] \(\fint_{B_r(x)}|\nabla u_a\cdot\nabla u_b
		-\delta_{ab}|\,\di\meas\leq\delta\) for every \(a,b=1,\ldots,k\).
	\end{itemize}
\end{definition}

The following characterization of the quasi continuous representative of Sobolev functions will be of relevance for our purposes, see \cite{KinnunenMartio02,KinnunenLatvala02} dealing with the more general setting of PI spaces.\\
Below we shall denote by $\mathrm{Cap}$ the $2$-capacity of the metric measure space $(X,\dist,\meas)$.

\begin{theorem}\label{thm:caprepr}
	Let $(X,\dist,\meas)$ be an $\RCD(K,N)$ metric measure space for some $K\in\setR$ and $1\le N<\infty$. Let $x\in X$ and $r>0$ be fixed and let $u\in H^{1,2}(B_r(x))$. Then the limit
	\begin{equation}\label{eq:good}
		\lim_{s\to 0}\fint_{B_s(y)}u(z)\di\meas(z)
	\end{equation}
	exists for $\mathrm{Cap}$-a.e.\ $y\in B_r(x)$. Moreover,
	\[
		\lim_{s\to 0}\fint_{B_s(y)}\abs{u(y)-u(z)}^2\di\meas(z)=0\, ,
	\]
	for $\mathrm{Cap}$-a.e.\ $y\in B_r(x)$, where we are considering the $\mathrm{Cap}$-a.e.\ well defined representative of $u$ given by \eqref{eq:good}.
\end{theorem}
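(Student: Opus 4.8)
The plan is to establish \eqref{eq:good} by a capacitary maximal function argument, exploiting the $(1,2)$-Poincaré inequality and doubling property that hold on $\RCD(K,N)$ spaces, together with the density of nice functions in $H^{1,2}$. First I would reduce to the situation where $u\in H^{1,2}(X)$ globally: since we only need the conclusion $\mathrm{Cap}$-a.e.\ on $B_r(x)$, we can multiply by a Lipschitz cutoff supported on a slightly larger ball and not vanishing on $\bar B_r(x)$, which does not affect the limit \eqref{eq:good} at points of $B_r(x)$. Then I would invoke the standard fact (see \cite{KinnunenMartio02,KinnunenLatvala02}, valid on PI spaces and hence on $\RCD(K,N)$ spaces, which are locally doubling and support a weak $(1,2)$-Poincaré inequality) that every $u\in H^{1,2}(X)$ has a \emph{quasi-continuous representative} $\tilde u$: a function equal to $u$ $\meas$-a.e.\ such that for every $\varepsilon>0$ there is an open set $G_\varepsilon$ with $\mathrm{Cap}(G_\varepsilon)<\varepsilon$ and $\tilde u|_{X\setminus G_\varepsilon}$ continuous.

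The heart of the argument is then to show that at $\mathrm{Cap}$-a.e.\ point $y$ the averages $\fint_{B_s(y)}u\,\di\meas$ converge to $\tilde u(y)$, and in fact with the stronger $L^2$-oscillation control stated in the theorem. For this I would use the Lebesgue-point theory \emph{relative to capacity}: one proves that the set of points where
\[
\limsup_{s\to 0}\fint_{B_s(y)}|u-\tilde u(y)|^2\,\di\meas>0
\]
has zero $2$-capacity. The key estimate is a capacitary weak-type bound for the fractional/Sobolev maximal operator $M^{*}u(y):=\sup_{s>0}\, s\bigl(\fint_{B_s(y)}|\nabla u|^2\,\di\meas\bigr)^{1/2}$ (or the analogous quantity controlling oscillations via Poincaré), namely $\mathrm{Cap}(\{M^{*}u>\lambda\})\le C\lambda^{-2}\|u\|_{H^{1,2}}^2$. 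Combining this with the $(1,2)$-Poincaré inequality, which gives
\[
\fint_{B_s(y)}\Bigl|u-\fint_{B_s(y)}u\,\di\meas\Bigr|^2\di\meas\le C\, s^2\fint_{B_{\lambda s}(y)}|\nabla u|^2\,\di\meas\, ,
\]
one gets that the averages $\fint_{B_s(y)}u\,\di\meas$ form a Cauchy net as $s\to 0$ outside a set of zero capacity, with limit agreeing $\mathrm{Cap}$-a.e.\ with $\tilde u$; iterating the telescoping estimate between radii $2^{-k}$ and $2^{-k-1}$ and summing controls the full limit and the oscillation. The passage from a bound on dyadic radii to all radii $s\to 0$ uses the doubling property in the routine way.

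The main obstacle I anticipate is purely bookkeeping around the outer-measure nature of $\mathrm{Cap}$: one must be careful that the exceptional sets produced at each dyadic scale are genuinely Borel (or at least $\mathrm{Cap}$-measurable) and that their countable union still has zero capacity, which is where subadditivity of $\mathrm{Cap}$ — guaranteed by its submodularity, cf.\ \eqref{eq:def_good_outer_meas} — is used. A second, minor point is that the $(1,2)$-Poincaré and doubling constants are only \emph{local} on an $\RCD(K,N)$ space when $K<0$; this is harmless since the statement is local on $B_r(x)$ and one works on a fixed bounded enlargement where uniform constants are available. Once these measure-theoretic subtleties are handled, the two displayed conclusions of the theorem follow simultaneously from the single oscillation estimate, since $\lim_{s\to0}\fint_{B_s(y)}|u(y)-u(z)|^2\,\di\meas(z)=0$ is exactly the statement that $y$ is an $L^2$-Lebesgue point with value $\tilde u(y)$ equal to the limit in \eqref{eq:good}.
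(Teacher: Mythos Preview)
Your proposal is correct and is essentially the approach of the references the paper invokes: the paper does not give its own proof of this theorem but simply cites \cite{KinnunenMartio02,KinnunenLatvala02}, which establish the result in the general PI setting via exactly the capacitary maximal function and Poincar\'e telescoping argument you outline. Your reduction to a global $H^{1,2}$ function by cutoff and your handling of the locality of the doubling and Poincar\'e constants are the appropriate adaptations to the $\RCD(K,N)$ framework.
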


Using that $|\nabla u|^2\in H^{1,2}(B_r(x))$ whenever $u:B_{2r}(x) \to \setR$ is harmonic we deduce the following.

\begin{corollary}\label{lemma:gradharm}
	Let $(X,\dist,\meas)$ be an $\RCD(K,N)$ metric measure space for some $K\in\setR$ and $1\le N<\infty$. Let $B_R(x)\subset X$ for some $R>0$ and $x\in X$ and let $u:B_R(x)\to\setR$ be harmonic. Then the limit
	\begin{equation}\label{eq:pointgrad}
		\abs{\nabla u(y)}^2:=\lim_{r\to 0}\fint_{B_r(y)}\abs{\nabla u(z)}^2\di\meas(z) \, ,
	\end{equation} 
	exists for $\mathrm{Cap}$-a.e.\ $y\in B_R(x)$. Moreover
	\[
		\lim_{r\to 0}\fint_{B_r(y)}\abs{\abs{\nabla u(y)}-\abs{\nabla u(z)}}^2\di\meas(z)=0\, .
	\]
\end{corollary}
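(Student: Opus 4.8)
The plan is to deduce \autoref{lemma:gradharm} from \autoref{thm:caprepr} applied to the Sobolev function $u:=\abs{\nabla u}^2$. First I would recall the key regularity fact for harmonic functions on $\RCD(K,N)$ spaces: if $u\colon B_R(x)\to\setR$ is harmonic then $u\in H^{1,2}_{\rm loc}(B_R(x))$ with $\abs{\nabla u}\in L^\infty_{\rm loc}$, and moreover $\abs{\nabla u}^2\in H^{1,2}_{\rm loc}(B_R(x))$. This is the content of the sentence preceding the statement and is a standard consequence of the Bochner inequality (and self-improvement) for $\RCD$ spaces together with the Caccioppoli-type estimates for harmonic functions; it allows us to treat $w:=\abs{\nabla u}^2$ as an honest element of $H^{1,2}(B_\rho(x))$ for every $\rho<R$.

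Next, I would fix an arbitrary $\rho<R$ and apply \autoref{thm:caprepr} to $w=\abs{\nabla u}^2\in H^{1,2}(B_\rho(x))$. That theorem gives, for $\mathrm{Cap}$-a.e.\ $y\in B_\rho(x)$, the existence of the limit
\[
\tilde w(y):=\lim_{s\to 0}\fint_{B_s(y)}w(z)\di\meas(z)=\lim_{s\to 0}\fint_{B_s(y)}\abs{\nabla u(z)}^2\di\meas(z),
\]
which is exactly the quantity denoted $\abs{\nabla u(y)}^2$ in \eqref{eq:pointgrad}, and moreover the stronger $L^2$-Lebesgue-point property
\[
\lim_{s\to 0}\fint_{B_s(y)}\abs{\tilde w(y)-w(z)}^2\di\meas(z)=0
\]
for $\mathrm{Cap}$-a.e.\ $y\in B_\rho(x)$. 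To conclude with the precise form of the second displayed limit in the statement, I would use the elementary pointwise inequality $\bigl|\abs{\nabla u(y)}-\abs{\nabla u(z)}\bigr|^2\le \bigl|\abs{\nabla u(y)}^2-\abs{\nabla u(z)}^2\bigr|$ (valid since $\abs{a-b}^2\le\abs{a^2-b^2}$ for $a,b\ge 0$, as $\abs{a-b}\le\abs{a+b}$), so that
\[
\fint_{B_s(y)}\bigl|\abs{\nabla u(y)}-\abs{\nabla u(z)}\bigr|^2\di\meas(z)\le\fint_{B_s(y)}\bigl|\tilde w(y)-w(z)\bigr|\di\meas(z)\le\Bigl(\fint_{B_s(y)}\bigl|\tilde w(y)-w(z)\bigr|^2\di\meas(z)\Bigr)^{1/2},
\]
where the last step is Jensen's (or Cauchy--Schwarz) inequality. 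The right-hand side tends to $0$ as $s\to 0$ for $\mathrm{Cap}$-a.e.\ $y$ by \autoref{thm:caprepr}, which gives the claim on $B_\rho(x)$. Finally, letting $\rho\uparrow R$ along a countable sequence and using that a countable union of $\mathrm{Cap}$-null sets is $\mathrm{Cap}$-null, we obtain both conclusions for $\mathrm{Cap}$-a.e.\ $y\in B_R(x)$.

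The only genuinely non-routine input is the claim that $\abs{\nabla u}^2$ is itself a Sobolev function when $u$ is harmonic; everything else is a soft combination of \autoref{thm:caprepr} with an elementary inequality and the stability of $\mathrm{Cap}$-negligibility under countable unions. Since the excerpt explicitly invokes ``Using that $\abs{\nabla u}^2\in H^{1,2}(B_r(x))$ whenever $u\colon B_{2r}(x)\to\setR$ is harmonic'', I would simply cite this (it follows from the improved Bochner inequality on $\RCD(K,N)$ spaces, see e.g.\ \cite{Gigli18}, combined with interior estimates for harmonic functions), so that the main obstacle is reduced to a bookkeeping matter of localization.
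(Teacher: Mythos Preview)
Your proposal is correct and follows exactly the approach intended by the paper: the corollary is stated immediately after the sentence ``Using that $|\nabla u|^2\in H^{1,2}(B_r(x))$ whenever $u:B_{2r}(x)\to\setR$ is harmonic we deduce the following'', so the paper's own proof is precisely to apply \autoref{thm:caprepr} to $w=|\nabla u|^2$. Your additional bookkeeping (the elementary inequality $|a-b|^2\le|a^2-b^2|$ for $a,b\ge 0$ to pass from the $L^2$-Lebesgue-point property of $|\nabla u|^2$ to that of $|\nabla u|$, and the exhaustion $\rho\uparrow R$) just fills in details the paper leaves implicit.
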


\begin{remark}
	Relying on the Bochner inequality one can sharpen the conclusion \eqref{eq:pointgrad} by showing the existence of the limit at any point $y\in B_R(x)$. Let us briefly sketch the argument for the reader's convenience.
	
	By Bochner's inequality, $\abs{\nabla u}^2$ has measure valued Laplacian bounded from below by a constant on $B_{R/2}(x)$. Indeed 
	\[
		\boldsymbol{\Delta}\frac{1}{2}\abs{\nabla u}^2\ge -K\abs{\nabla u}^2\meas\, ,\quad\text{on $B_{R/2}(x)$}\, ,
	\]
	see \cite{Savare14,Gigli18}. Moreover, by \cite{Jiang12}, $\abs{\nabla u}$ is bounded on $B_{R/2}(x)$.
	
	Then we can recall that, on general metric measure spaces supporting doubling and Poincar\'e inequalities, subharmonic functions have Lebesgue points everywhere,
	see \cite[Proposition 8.24]{BjornBjorn11}. In our setting, this regularity result extends to functions with Laplacian locally bounded from below, since we can always perturb them to locally subharmonic functions by adding a solution of $\Delta v= c$ on $B_{r}(x)$ for any given $c\in\setR$ and, again by \cite{Jiang12}, $v$ is locally Lipschitz.
\end{remark}

Besides quasi-continuous functions, quasi-continuous vector fields play an important role in the theory of \(\RCD(K,N)\) spaces.
This is made precise by the notion of \emph{tangent \(L^0({\rm Cap})\)-module} \(L^0_{\rm Cap}(TX)\) introduced in \cite[Theorem 2.6]{DebinGigliPasqualetto21},
which we are going to recall. First of all, let us stress again that \(\langle\nabla f,\nabla g\rangle\in H^{1,2}(X)\) whenever
\(f,g\in{\rm Test}(X)\). In particular, the function \(\langle\nabla f,\nabla g\rangle\) (and thus also \(|\nabla f|^2\)) admits a quasi-continuous representative.
\medskip

The \emph{capacitary tangent module} \(L^0_{\rm Cap}(TX)\) and the \emph{capacitary gradient} \(\bar\nabla\colon{\rm Test}(X)\to L^0_{\rm Cap}(TX)\)
can be characterized as follows. The space \(L^0_{\rm Cap}(TX)\) is the Hilbert \(L^0({\rm Cap})\)-normed \(L^0({\rm Cap})\)-module generated by
the image of \(\bar\nabla\), while \(\bar\nabla\) is a linear operator such that \(|\bar\nabla f|\) coincides \({\rm Cap}\)-a.e.\ with any quasi-continuous
representative of \(|\nabla f|\) for every \(f\in{\rm Test}(X)\).

\subsection{Structure theory for \texorpdfstring{$\RCD(K,N)$}{RCD} spaces}

Let us briefly review the main results concerning the state of the art about the so-called structure theory of $\RCD(K,N)$ spaces.

Given a m.m.s.\ $(X,\dist,\meas)$, $x\in X$ and $r\in(0,1)$, we consider the rescaled and normalized pointed m.m.s.\ $(X,r^{-1}\dist,\meas_r^{x},x)$, where 
\[
\meas_r^x:= \left( \int_{B_r(x)} \left(1-\frac{\dist(x,y)}{r}\right) \di \meas(y)\right)^{-1}\meas=C(x,r)^{-1}\meas\, .
\]

\begin{definition}
	We say that a pointed m.m.s.\ $(Y,\dist_Y,\eta,y)$ is tangent to $(X,\dist,\meas)$ at $x$ if there exists a sequence $r_i\downarrow 0$ such that $(X,r_i^{-1}\dist,\meas_{r_i}^x,x)\rightarrow(Y,\dist_Y,\eta,y)$ in the pmGH-topology. The collection of all the tangent spaces of $(X,\dist,\meas)$ at $x$ is denoted by $\Tan_x(X,\dist,\meas)$.
\end{definition}

A compactness argument, which is originally due to Gromov, together with the rescaling and stability properties of the $\RCD(K,N)$ condition, yields that $\Tan_x(X,\dist,\meas)$ is non-empty for every $x\in X$ and its elements are all $\RCD(0,N)$ pointed m.m.\ spaces.\\
Let us recall below the notion of $k$-regular point and $k$-regular set. 

\begin{definition}\label{def:regular point}
	Given any natural $1\le k\le N$, we say that $x\in X$ is a $k$-regular point if
	\[
		\Tan_x(X,\dist,\meas)=\left\lbrace (\setR^k,\dist_{eucl},c_k\Leb^k,0)  \right\rbrace\, .
	\]
	We shall denote by $\mathcal{R}_k$ the set of $k$-regular points in $X$.
\end{definition}

\begin{theorem}\label{thm:structure theory}
	Let $(X,\dist,\meas)$ be an $\RCD(K,N)$ m.m.s.\ with $K\in\setR$ and $1\le N<\infty$. Then there exists a natural number $1\le n\le N$, called essential dimension of $X$, such that $\meas(X\setminus \mathcal{R}_n)=0$. Moreover $\mathcal{R}_n$ is $(\meas,n)$-rectifiable and $\meas$ is representable as $\theta\haus^n\res {\mathcal{R}_n}$ for some nonnegative density $\theta\in L^1_{\loc}(X,\haus^n\res\mathcal{R}_n)$.
\end{theorem}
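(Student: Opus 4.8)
The plan is to assemble three structural results about $\RCD(K,N)$ spaces. \textbf{Step 1: rectifiable decomposition.} First I would invoke the rectifiability theorem of Mondino-Naber: on an $\RCD(K,N)$ space, $\meas$-a.e.\ point admits a unique tangent which is Euclidean of some integer dimension $k$, necessarily $1\le k\le N$ (values $k>N$ being excluded by Bishop-Gromov volume monotonicity), and the regular sets $\mathcal{R}_k$ cover $X$ up to a $\meas$-null set, each $\mathcal{R}_k$ being $(\meas,k)$-rectifiable in the sense of being countably covered by biLipschitz images of Borel subsets of $\setR^k$. This yields a decomposition $\meas=\sum_{k=1}^{\lfloor N\rfloor}\meas\res\mathcal{R}_k$, but a priori several summands could be nonzero.

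\textbf{Step 2: constancy of the dimension.} The heart of the statement is that exactly one stratum carries the whole mass, i.e.\ there is a single $n$ with $\meas(X\setminus\mathcal{R}_n)=0$; this is the theorem of Bru\'e-Semola \cite{BrueSemola20}. The idea is to read off the local dimension from an asymptotic invariant of the heat kernel — heuristically, the dimension at $x$ is the integer $k$ for which $t^{k/2}p_t(x,x)$ admits a positive finite limit as $t\downarrow 0$ — and then to propagate this invariant using the theory of regular Lagrangian flows: one connects $\meas$-a.e.\ pair of points by trajectories of suitable flows along which the relevant quantity varies continuously, so that the integer $k$ cannot jump. Combined with the connectedness of $(X,\dist)$, this forces $k$ to be $\meas$-a.e.\ constant, equal to the essential dimension $n$; restricting the decomposition of Step 1 then gives $\meas=\meas\res\mathcal{R}_n$, which also transfers the $(\meas,n)$-rectifiability to $\mathcal{R}_n$.

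\textbf{Step 3: representation of the measure.} It remains to identify $\meas$ with a density against $\haus^n\res\mathcal{R}_n$. Bishop-Gromov monotonicity together with the Euclidean structure of the tangent at points of $\mathcal{R}_n$ yields two-sided bounds $0<\Theta^n_*(\meas,x)\le\Theta^{*,n}(\meas,x)<\infty$ for the $n$-dimensional lower and upper densities of $\meas$ at $\meas$-a.e.\ $x\in\mathcal{R}_n$. The remaining, more delicate, ingredient is the absolute continuity $\meas\res\mathcal{R}_n\ll\haus^n\res\mathcal{R}_n$, which follows from the analysis of the behaviour of $\meas$ under charts on $\RCD$ (or, more generally, PI) spaces due to De Philippis-Marchese-Rindler, Kell-Mondino and Gigli-Pasqualetto, asserting that the pushforward of $\meas$ under any $n$-dimensional chart is absolutely continuous with respect to $\Leb^n$. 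Since $\haus^n\res\mathcal{R}_n$ is $\sigma$-finite by rectifiability, a Radon-Nikodym/Besicovitch-type differentiation argument produces a Borel density $\theta\ge 0$ with $\meas=\theta\,\haus^n\res\mathcal{R}_n$, and the upper density bound forces $\theta\in L^1_\loc(X,\haus^n\res\mathcal{R}_n)$.

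\textbf{Main obstacle.} The genuine difficulty is Step 2: the infinitesimal analysis of Step 1 is by itself unable to exclude a space whose mass splits among several regular strata (such phenomena do occur for general PI spaces), and ruling it out in the $\RCD$ setting requires a global propagation mechanism — the heat-kernel asymptotics coupled with regular Lagrangian flows — rather than a purely local argument. The absolute continuity input in Step 3 is the second most delicate point, being itself a nontrivial output of the fine structure theory of PI spaces.
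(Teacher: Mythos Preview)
Your proposal is correct and matches the paper's treatment: this theorem is stated in the preliminaries as a background result, and the paper does not give its own proof but simply attributes the three ingredients exactly as you do --- rectifiability to Mondino--Naber \cite{MondinoNaber19}, constancy of the dimension to Bru\'e--Semola \cite{BrueSemola20}, and the representation of the measure to the independent works \cite{KellMondino18,DePhilippisMarcheseRindler17,GigliPasqualetto16a}. One minor caveat on your sketch of Step~2: the actual mechanism in \cite{BrueSemola20} identifies the local dimension via the density exponent of $\meas$ rather than via short-time heat-kernel asymptotics, but your description of the propagation step through regular Lagrangian flows is the right one.
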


The rectifiability above was obtained in \cite{MondinoNaber19}, while the behaviour of the reference measure was studied in the independent works \cite{KellMondino18,DePhilippisMarcheseRindler17,GigliPasqualetto16a}.
The constancy of the dimension was obtained in \cite{BrueSemola20} partially generalizing the previous \cite{ColdingNaber12}, which was dealing with Ricci limit spaces. 
In the more recent \cite{Deng20} the results of \cite{ColdingNaber12} have been fully generalized to the $\RCD$ framework. In particular we will rely on the following consequence of the main result of \cite{Deng20}.

\begin{proposition}\label{prop:a.e.full}
Let $(X,\dist,\meas)$ be an $\RCD(K,N)$ metric measure space whose essential dimension is $1\le n\le N$. Let $\gamma:[0,1]\to X$ be a geodesic such that $\gamma(t)\in \mathcal{R}_n$ for a.e.\ $t\in (0,1)$. Then $\gamma(t)\in \mathcal{R}_n$ for every $t\in (0,1)$.
\end{proposition}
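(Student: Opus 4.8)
The plan is to exploit the Hölder continuity of tangent cones along the interior of geodesics, which is the content of \cite{Deng20} and which underlies \autoref{prop:a.e.full} itself, combined with the structure theory (\autoref{thm:structure theory}). First I would recall that by \cite{Deng20}, for a geodesic $\gamma$ and any two interior points $\gamma(s),\gamma(t)$ with $s,t\in(0,1)$, the tangent cones $\Tan_{\gamma(s)}(X,\dist,\meas)$ and $\Tan_{\gamma(t)}(X,\dist,\meas)$ are ``close'' in a quantitative Gromov--Hausdorff sense governed by a Hölder bound in $|s-t|$; in particular, the property of being a $k$-regular point for a \emph{fixed} $k$ is preserved along the interior of the geodesic. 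The key observation is then a rigidity/propagation statement: if $\gamma(t_0)\in\mathcal R_n$ for one interior $t_0$, then by the Hölder continuity of tangent cones the whole interior is ``uniformly close'' to $\setR^n$ at small scales, which by the stability of the $\RCD$ condition and the fact that $\setR^n$ is an isolated point among $\RCD(0,N)$ cones (no $\RCD(0,N)$ metric cone can approximate $\setR^n$ unless it \emph{is} $\setR^n$, since Euclidean space is rigid) forces $\gamma(t)\in\mathcal R_n$ for every $t\in(0,1)$.

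The concrete steps I would carry out are the following. Step 1: use the hypothesis $\gamma(t)\in\mathcal R_n$ for a.e.\ $t\in(0,1)$ to pick some $t_0\in(0,1)$ with $\gamma(t_0)\in\mathcal R_n$. Step 2: invoke the main theorem of \cite{Deng20} to get that the map $t\mapsto\Tan_{\gamma(t)}$ is Hölder continuous on compact subintervals of $(0,1)$ in the pmGH-topology; more precisely, that for each interior $t$ and each scale $r$ small, the rescaled ball $(X,r^{-1}\dist,\meas_r^{\gamma(t)},\gamma(t))$ is pmGH-close to the rescaled ball at $\gamma(t_0)$, uniformly as $t\to t_0$ along the geodesic. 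Step 3: since $\gamma(t_0)\in\mathcal R_n$, the rescaled balls at $\gamma(t_0)$ converge to $(\setR^n,\dist_{eucl},c_n\Leb^n,0)$; combining with Step 2, the same holds at every nearby interior $t$, so $\mathcal R_n\ni\gamma(t)$ for all $t$ in a neighbourhood of $t_0$. Step 4: the set $\{t\in(0,1):\gamma(t)\in\mathcal R_n\}$ is therefore open, and by the a.e.\ hypothesis it is also dense; a connectedness/bootstrap argument (cover $(0,1)$ by intervals on which the propagation applies, using that at every point of $\mathcal R_n$ one can restart the argument) upgrades this to all of $(0,1)$.

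The main obstacle, and the step that requires the most care, is Step 3--Step 4: turning the qualitative ``Hölder continuity of tangent cones'' into the \emph{exact} conclusion $\gamma(t)\in\mathcal R_n$ rather than merely ``$\gamma(t)$ has a tangent cone close to $\setR^n$''. This is where one needs the rigidity that $\setR^n$ admits no nontrivial pmGH-perturbation within $\RCD(0,N)$ cones — equivalently, that if a tangent cone at $\gamma(t)$ is within $\eps$ of $\setR^n$ for all $\eps>0$ (which is what uniform closeness as $t\to t_0$ yields in the limit), then that tangent cone equals $\setR^n$ and, crucially, it is the \emph{unique} tangent cone there. The uniqueness is the subtle part: Hölder continuity gives closeness of the tangent \emph{cone set} but one must rule out oscillation among distinct $k$-regular cones with $k<n$; this is handled by the almost-everywhere regularity hypothesis plus the fact that lower-dimensional Euclidean spaces are uniformly bounded away from $\setR^n$ in the pmGH-metric, so the Hölder estimate forbids jumping between different dimensions. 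Once this rigidity is in place the propagation in Step 4 is straightforward, since the ``good set'' is open, dense, and locally propagates.
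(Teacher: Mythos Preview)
Your overall strategy---use Deng's continuity of tangent cones along the interior of a geodesic together with rigidity of \(\setR^n\)---is the right one, and indeed it is what the paper does. However, the logical structure you propose in Steps 3--4 has a genuine gap.

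You argue that the set \(G\coloneqq\{t\in(0,1):\gamma(t)\in\mathcal R_n\}\) is open (Step 3) and of full measure, and then assert that ``a connectedness/bootstrap argument'' upgrades this to \(G=(0,1)\). But an open set of full Lebesgue measure in \((0,1)\) need not equal \((0,1)\): its complement can be any closed Lebesgue-null set, for instance a measure-zero Cantor set. No amount of restarting the openness argument from points of \(G\) will reach the points of the complement, since those are never interior to \(G\). So Step 4 as written does not close.

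The remedy is to reverse the direction of the argument: rather than proving \(G\) is open, prove directly that \(G=(0,1)\) by showing that every \(s\in(0,1)\) belongs to \(G\). This is what the paper does, and it is in fact simpler. Fix \(s\in(0,1)\) and any sequence \(r_i\downarrow 0\) along which a tangent cone at \(\gamma(s)\) exists. By density of \(G\) (from the a.e.\ hypothesis) choose \(t_j\to s\) with \(t_j\in G\). Deng's continuity of tangent cones along the \emph{same} sequence of radii \(r_i\) then gives that the tangent cone at \(\gamma(s)\) is the pGH-limit of the tangent cones at \(\gamma(t_j)\), and the latter are all \(\setR^n\) since \(\gamma(t_j)\in\mathcal R_n\). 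Hence every tangent cone at \(\gamma(s)\) is isometric, as a metric space, to \(\setR^n\). This is effectively a ``closedness'' argument rather than your ``openness'' one, and it uses the a.e.\ hypothesis in the right way: at \emph{every} interior point, not just at a single \(t_0\).

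Two smaller points. First, Deng's result gives continuity in the pGH (metric) topology, not the pmGH topology; so the argument above only yields that each tangent cone at \(\gamma(s)\) is isometric to \(\setR^n\) as a metric space. The paper closes this gap with an application of the splitting theorem \cite{Gigli13}: any \(\RCD(0,N)\) metric measure space whose underlying metric space is \(\setR^n\) must have measure \(c\Leb^n\), so \(\gamma(s)\in\mathcal R_n\) as a metric \emph{measure} statement. Your proposal does not address this. Second, your Step 3 itself is not obviously true: it would require a \emph{gap} statement of the form ``any \(\RCD(0,N)\) tangent cone within \(\eps_0\) of \(\setR^n\) in pGH is equal to \(\setR^n\)''. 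Such gaps are available for metric cones in the noncollapsed setting, but tangent cones on general \(\RCD(K,N)\) spaces are not known to be metric cones, so this step would need separate justification. Fortunately, the closedness argument above bypasses this issue entirely.
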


\begin{proof}
By \cite{Deng20} tangent cones coming from the same sequence of scaling radii $r_i\downarrow 0$ are continuous w.r.t.\ the pGH topology in the interior of minimizing geodesics. Since $\gamma(t)\in\mathcal{R}_n$ for a.e.\ $t\in(0,1)$ it follows that any tangent cone at any $\gamma(s)$ for $s\in (0,1)$ is isometric as a metric space to $(\mathbb{R}^n, \dist_{eucl})$. 
An iterative application of the splitting theorem \cite{Gigli13} yields that any $\RCD(0,N)$ metric measure space isometric to $(\mathbb{R}^n, \dist_{eucl})$ is actually isomorphic as a metric measure space to $(\setR^n, \dist_{eucl}, c\Leb^n)$. This implies that $\gamma(s)\in\mathcal{R}_n$ for any $s\in (0,1)$.
\end{proof}

\subsection{Sets of finite perimeter}
Here we recall the basic background about sets of finite perimeter on metric measure spaces. Then we present some more recent results obtained in the setting of $\RCD(K,N)$ spaces.

\begin{definition}[Function of bounded variation]\label{def:bvfunction}
	We say that $f\in L^1(X,\meas)$ belongs to the space $\BV(X,\dist,\meas)$ of functions of bounded variation if there exist
	locally Lipschitz functions $f_i$ converging to $f$ in $L^1(X,\meas)$ such that
	\begin{equation*}
	\limsup_{i\to\infty}\int_X\abs{\nabla f_i}\di \meas<+\infty\, .
	\end{equation*}
	\end{definition}  
	If $f\in \BV(X,\dist,\meas)$ one can define 
	\begin{equation*}
	\abs{Df}(A):= \inf\left\lbrace \liminf_{i\to\infty}\int_A\abs{\nabla f_i}\di \meas: f_i\in\Lip_{\loc}(A),\quad f_i\to f \text{ in } L^1(A,\meas)\right\rbrace\, ,  
	\end{equation*}
	for any open $A\subset X$. This set function 
	is the restriction to open sets of a finite Borel measure that we call \emph{total variation of $f$} and still denote by $\abs{Df}$.

Dropping the global integrability condition on $f=\chi_E$, let us recall now the analogous definition of set of finite perimeter 
in a metric measure space (see again \cite{Am02,Miranda03,AmbrosioDiMarino14}).

\begin{definition}[Perimeter and sets of finite perimeter]
	\label{def:setoffiniteperimeter}
	Given a Borel set $E\subset X$ and an open set $A$ the perimeter $\Per_E(A)$ is defined in the following way:
	\begin{equation*}
	\Per_E(A):=\inf\left\lbrace \liminf_{n\to\infty}\int_A\abs{\nabla u_n}\di\meas\, :\,  u_n\in\Liploc(A),\quad u_n\to\chi_E\quad \text{in } L^1_{{\loc}}(A,\meas)\right\rbrace\,  .
	\end{equation*}
	We say that $E$ has finite perimeter if $\Per_E(X)\, <\, \infty$. In that case it can be proved that the set function $A\mapsto\Per_E(A)$ is the restriction to open sets of a finite Borel measure $\Per_E$ defined by
	\begin{equation*}
	\Per_E(B):=\inf\left\lbrace \Per_E(A): B\subset A,\text{ } A \text{ open}\right\rbrace\, .
	\end{equation*}
\end{definition}

Let us remark for the sake of clarity that $E\subset X$ with finite $\meas$-measure is a set of finite perimeter if and only if $\chi_E\in \BV(X,\dist,\meas)$ and that $\Per_E=\abs{D\chi_E}$. In the following we will say that $E\subset X$ is a set of locally finite perimeter if $\chi_E$ is a function of locally bounded variation, that is to say $\eta\chi_E\in \BV(X,\dist,\meas)$ for any $\eta\in \Lipbs(X,\dist)$.
\medskip

We recall that a coarea formula holds in this generality, see
\cite[Proposition 4.2]{Miranda03}, dealing with locally compact spaces, and its proof works in the more general setting of metric measure spaces.

\begin{theorem}\label{thm:coarea}
Let $v\in\BV(X,\dist,\meas)$.
Then, $\{v>r\}$ has finite perimeter for $\Leb^1$-a.e.\ $r\in\setR$ and, for any Borel function $f:X\to[0,+\infty]$, it holds 
\begin{equation}\label{eq:coarea}
\int_X f\di\abs{Dv}=\int_{-\infty}^{+\infty}\left(\int_X f\di\Per_{\{v>r\}}\right)\di r\,  .
\end{equation}
\end{theorem}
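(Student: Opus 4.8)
The plan is to follow the classical Fleming--Rishel argument, in the metric measure space form of \cite{Miranda03}, establishing separately the two inequalities between $\int_X f\di\abs{Dv}$ and $\int_{-\infty}^{+\infty}\big(\int_X f\di\Per_{\{v>r\}}\big)\di r$. Both sides define Borel measures as functions of the set over which one integrates: for the right-hand side one first checks that $r\mapsto\Per_{\{v>r\}}(B)$ is Borel measurable for every $B\in\Borel(X)$ — it is a pointwise $\liminf$ of measurable functions, thanks to the monotonicity of $r\mapsto\{v>r\}$ and the $L^1_{\loc}$-lower semicontinuity of the perimeter — so that $B\mapsto\int_\setR\Per_{\{v>r\}}(B)\di r$ is a Borel measure by Tonelli's theorem. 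Hence it suffices to prove $\abs{Dv}(A)=\int_{-\infty}^{+\infty}\Per_{\{v>r\}}(A)\di r$ for every open $A\subset X$, the case of a general Borel $f\geq 0$ then following by Cavalieri's formula. I note that the inequality $\geq$, specialized to $A=X$, already gives $\int_\setR\Per_{\{v>r\}}(X)\di r\le\abs{Dv}(X)<+\infty$, hence that $\{v>r\}$ has finite perimeter for $\Leb^1$-a.e.\ $r$, which is the first assertion.

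For the inequality $\geq$, I would pick locally Lipschitz functions $u_j\to v$ in $L^1_{\loc}(A)$ with $\int_A\abs{\nabla u_j}\di\meas\to\abs{Dv}(A)$. For a locally Lipschitz $u$ one has the coarea inequality $\int_A\abs{\nabla u}\di\meas\geq\int_{-\infty}^{+\infty}\Per_{\{u>r\}}(A)\di r$, obtained by approximating each $\chi_{\{u>r\}}$ by the Lipschitz truncations $\min\{1,\max\{0,n(u-r)\}\}$ and invoking the lower semicontinuity of the perimeter together with Fatou's lemma. Applying this to $u_j$, passing to a subsequence along which $u_j\to v$ $\meas$-a.e.\ (so that $\chi_{\{u_j>r\}}\to\chi_{\{v>r\}}$ in $L^1_{\loc}(A)$ for $\Leb^1$-a.e.\ $r$), and using once more the lower semicontinuity of the perimeter and Fatou's lemma, I obtain
\[
\abs{Dv}(A)=\lim_j\int_A\abs{\nabla u_j}\di\meas\geq\liminf_j\int_{-\infty}^{+\infty}\Per_{\{u_j>r\}}(A)\di r\geq\int_{-\infty}^{+\infty}\Per_{\{v>r\}}(A)\di r\, .
\]

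For the inequality $\le$, I would first reduce to nonnegative functions. Writing $v=v^+-v^-$ with $v^\pm\in\BV(X,\dist,\meas)$, subadditivity of the total variation gives $\abs{Dv}(A)\le\abs{Dv^+}(A)+\abs{Dv^-}(A)$, while $\int_\setR\Per_{\{v>r\}}(A)\di r=\int_0^{+\infty}\Per_{\{v^+>r\}}(A)\di r+\int_0^{+\infty}\Per_{\{v^->r\}}(A)\di r$, because $\{v>r\}=\{v^+>r\}$ for $r>0$ and, for $\Leb^1$-a.e.\ $r<0$, $\{v>r\}$ agrees $\meas$-a.e.\ with $X\setminus\{v^->-r\}$ (and a Borel set and its complement have the same perimeter); so it suffices to treat $v\geq 0$. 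For $k\in\setN$ and a shift $a\in(0,1/k)$ set $v_k^a:=\frac1k\sum_{i=1}^\infty\chi_{\{v>a+i/k\}}$; since $0\le v-v_k^a\le\min\{v,2/k\}$ one has $v_k^a\to v$ in $L^1(\meas)$, and subadditivity of the total variation gives $\abs{Dv_k^a}(A)\le S_k(a):=\frac1k\sum_{i=1}^\infty\Per_{\{v>a+i/k\}}(A)$. Averaging over the shift,
\[
k\int_0^{1/k}S_k(a)\di a=\int_{1/k}^{+\infty}\Per_{\{v>r\}}(A)\di r\le\int_0^{+\infty}\Per_{\{v>r\}}(A)\di r<+\infty\, ,
\]
the finiteness being the first part of the proof; hence $S_k(a_k)\le\int_0^{+\infty}\Per_{\{v>r\}}(A)\di r$ for some $a_k\in(0,1/k)$. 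Since $v_k^{a_k}\to v$ in $L^1_{\loc}(A)$, lower semicontinuity of the total variation finally yields $\abs{Dv}(A)\le\liminf_k\abs{Dv_k^{a_k}}(A)\le\liminf_k S_k(a_k)\le\int_0^{+\infty}\Per_{\{v>r\}}(A)\di r$.

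I expect the inequality $\le$ to be the delicate point. The naive layer-cake approximation $\frac1k\sum_i\chi_{\{v>i/k\}}$ only produces Riemann-type sums $\frac1k\sum_i\Per_{\{v>i/k\}}(A)$, which in general fail to converge to $\int_0^{+\infty}\Per_{\{v>r\}}(A)\di r$: the function $r\mapsto\Per_{\{v>r\}}(A)$ is merely integrable, and the perimeter of superlevel sets need not be monotone in the level. Introducing the shift $a$ and averaging over $a\in(0,1/k)$ is what circumvents this, producing for each $k$ a single discretization whose associated sum is dominated by the target integral; combined with the lower semicontinuity of the total variation, this is all that is needed.
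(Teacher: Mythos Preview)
Your argument is correct and is precisely the classical Fleming--Rishel proof, as carried out in \cite[Proposition 4.2]{Miranda03}. The paper does not prove \autoref{thm:coarea} at all: it simply recalls the statement and refers to Miranda's paper, remarking that the locally compact assumption there is not needed. So there is nothing to compare against beyond that citation, and your write-up is exactly a self-contained version of the cited proof; in particular the shift-and-average trick you single out for the inequality $\le$ is the standard device used there.
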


In \cite{Am02} a general theory of sets of finite perimeter on metric measure spaces satisfying doubling and Poincar\'e inequalities (the so-called PI spaces) was developed. The following asymptotic doubling property of the perimeter measure will be of some relevance for our purposes.

\begin{proposition}[Corollary 5.8 in \cite{Am02}]\label{prop:asymdoub}
Let $(X,\dist,\meas)$ be a PI metric measure space and let $E\subset X$ be a set of finite perimeter. Then $\abs{D\chi_E}$ is asymptotically doubling, i.e.\
\[
\limsup_{r\to 0}\frac{\abs{D\chi_E}(B_{2r}(x))}{\abs{D\chi_E}(B_{r}(x))}<\infty\, ,\quad\text{for $\abs{D\chi_E}$-a.e.\ $x\in X$}\, .
\]
\end{proposition}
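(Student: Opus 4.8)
The statement is the asymptotic doubling of the perimeter measure on a PI space, and the plan is to reduce it to the following \emph{two-sided density estimate}, which is where the real content lies: there exist constants $0<\gamma\le C<\infty$, depending only on the doubling and Poincar\'e constants of $(X,\dist,\meas)$, such that for $\abs{D\chi_E}$-a.e.\ $x\in X$
\begin{equation}\label{eq:asymdoub-dens-sketch}
\gamma\le\liminf_{r\to 0}\frac{r\,\abs{D\chi_E}(B_r(x))}{\meas(B_r(x))}\le\limsup_{r\to 0}\frac{r\,\abs{D\chi_E}(B_r(x))}{\meas(B_r(x))}\le C\, .
\end{equation}
Granting \eqref{eq:asymdoub-dens-sketch}, the conclusion follows at once. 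Fix a point $x$ at which \eqref{eq:asymdoub-dens-sketch} holds and let $C_D$ be the doubling constant of $\meas$. For all sufficiently small $r$ one has $\abs{D\chi_E}(B_{2r}(x))\le C\,\meas(B_{2r}(x))/r$ and $\abs{D\chi_E}(B_r(x))\ge\tfrac{\gamma}{2}\,\meas(B_r(x))/r>0$, hence
\[
\frac{\abs{D\chi_E}(B_{2r}(x))}{\abs{D\chi_E}(B_r(x))}\le\frac{2C}{\gamma}\cdot\frac{\meas(B_{2r}(x))}{\meas(B_r(x))}\le\frac{2C\,C_D}{\gamma}\, ,
\]
and letting $r\to 0$ gives the claim.

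For the lower bound in \eqref{eq:asymdoub-dens-sketch} I would start from the relative isoperimetric inequality, itself a consequence of the $(1,1)$-Poincar\'e inequality built into the PI assumption: approximating $\chi_E$ by locally Lipschitz functions $f_i\to\chi_E$ in $L^1_\loc$ with $\int_{B_{\lambda r}(x)}\abs{\nabla f_i}\di\meas\to\abs{D\chi_E}(B_{\lambda r}(x))$, applying the Poincar\'e inequality to each $f_i$ over $B_r(x)$, and passing to the limit via the elementary identity $\fint_{B_r(x)}\bigl|\chi_E-(\chi_E)_{B_r(x)}\bigr|\di\meas=2\alpha(1-\alpha)\ge\min\{\alpha,1-\alpha\}$, where $\alpha:=\meas(E\cap B_r(x))/\meas(B_r(x))$, one arrives at
\[
\min\bigl\{\meas(E\cap B_r(x)),\,\meas(B_r(x)\setminus E)\bigr\}\le C_P\,r\,\abs{D\chi_E}(B_{\lambda r}(x))\, .
\]
A Vitali-type covering argument then shows that $\abs{D\chi_E}$ is concentrated on the set of points at which both $\alpha$ and $1-\alpha$ stay bounded below by a fixed constant as $r\to 0$; substituting this into the displayed inequality and absorbing the dilation factor $\lambda$ via the doubling of $\meas$ yields $\abs{D\chi_E}(B_{\lambda r}(x))\gtrsim\meas(B_r(x))/r\gtrsim\meas(B_{\lambda r}(x))/(\lambda r)$ for all sufficiently small $r$, which after renaming $\lambda r$ gives the left inequality in \eqref{eq:asymdoub-dens-sketch}.

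The hard part will be the upper bound in \eqref{eq:asymdoub-dens-sketch}. Unlike the lower bound there is no direct argument from the relative isoperimetric inequality, essentially because $E$ need not be a perimeter minimizer, and it is here that the geometry of PI spaces enters in an essential way. The route I would follow is the one of \cite[\S 5]{Am02}: apply the coarea formula of \autoref{thm:coarea} to the $1$-Lipschitz function $w:=\dist(x,\cdot)$ (which is of locally bounded variation with $\abs{\nabla w}\le 1$) to obtain $\int_r^{2r}\Per_{B_\rho(x)}(X)\,\di\rho\lesssim\meas(B_{2r}(x))$, so that for each small $r$ one may pick an intermediate radius $\rho\in(r,2r)$ with $\Per_{B_\rho(x)}(X)\lesssim\meas(B_{2r}(x))/r$ and $\abs{D\chi_E}(\partial B_\rho(x))=0$; then, comparing $E$ with the sets $E\cup B_\rho(x)$ and $E\setminus B_\rho(x)$ and invoking the relative isoperimetric inequality inside $B_\rho(x)$ to control the minority of $E$, $X\setminus E$ there, a further covering argument bounds $\abs{D\chi_E}(B_r(x))$ by a multiple of $\Per_{B_\rho(x)}(X)\lesssim\meas(B_{2r}(x))/r\lesssim\meas(B_r(x))/r$, the last step again by doubling. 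This upper Ahlfors-type bound is the technically most demanding step, and I expect it to be the main obstacle. Assembling it with the lower bound proves \eqref{eq:asymdoub-dens-sketch}, and hence the proposition.
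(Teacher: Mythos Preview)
The paper does not give its own proof of this proposition: it is stated as a citation of \cite[Corollary~5.8]{Am02} and used as a black box. So there is no ``paper's proof'' to compare against beyond the reference itself.

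Your outline is the correct one and mirrors the structure of \cite[\S 5]{Am02}: reduce asymptotic doubling to a two-sided comparison of $r\,|D\chi_E|(B_r(x))$ with $\meas(B_r(x))$, get the lower bound from the relative isoperimetric inequality together with the fact that $|D\chi_E|$ is concentrated on points of intermediate density, and then deduce doubling from the doubling of $\meas$. Your reduction step and your lower-bound step are fine as written.

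The upper-bound sketch, however, is where your proposal is genuinely incomplete. The sentence ``comparing $E$ with the sets $E\cup B_\rho(x)$ and $E\setminus B_\rho(x)$'' reads as if some minimality or competitor argument were available, but $E$ is an arbitrary set of finite perimeter, so no such comparison bounds $|D\chi_E|(B_r(x))$ from above. The lattice inequality $\Per(E\cap F)+\Per(E\cup F)\le \Per(E)+\Per(F)$ goes the wrong way for this purpose. In \cite{Am02} the upper density bound is not obtained by any such competitor trick; it comes instead from a covering/structure argument for the measure-theoretic boundary (Theorems~5.3--5.4 there), showing directly that $|D\chi_E|$ is controlled from above by the codimension-one gauge $h(B_r(x))=\meas(B_r(x))/r$. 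You have correctly flagged this as the main obstacle, but the mechanism you describe would not work; if you want to fill this in, follow the covering argument of \cite[Theorem~5.4]{Am02} rather than a comparison with $E\cup B_\rho(x)$ or $E\setminus B_\rho(x)$.
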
 

An important consequence of the asymptotically doubling property is the validity of the Lebesgue differentiation theorem, i.e.\ for any $f\in L^1(|D\chi_E|)$, for $|D\chi_E|$-a.e.\ $x\in X$ it holds
\[
	\lim_{r\to 0}  \fint_{B_r(x)} |f(y) - f(x)| \di |D\chi_E|(y) = 0 \, .
\]
It can be proven by observing that a countable sub-family of $\{G_{r,M} \, : \, r>0,\, M>1\}$ covers $X$ up to a $|D\chi_E|$-negligible set, where
\[
	G_{r,M} := \{ x\in X : |D\chi_E|(B_{2s}(x)) \le M |D\chi_E|(B_s(x)) \, \, \text{for any $0<s<r$} \} \, .
\]
On each $G_{r,M}$ we can apply the standard Lebesgue differentiation theorem for doubling metric measure spaces.

Let us recall the notion of tangent to a set of finite perimeter that has been introduced in \cite{AmbrosioBrueSemola19}.

\begin{definition}\label{def:blowupset}
	Let $(X,\dist,\meas)$ be an $\RCD(K,N)$ m.m.s., $x\in X$ and let $E\subset X$ be a set of locally finite perimeter. 
	We denote by $\Tan_x(X,\dist,\meas,E)$ the collection of quintuples $(Y,\varrho,\mu,y, F)$ satisfying the following two properties:
	\begin{itemize}
		\item[(a)] $(Y,\varrho,\mu,y)\in\Tan_x(X,\dist,\meas)$ and $r_i\downarrow 0$ are such that the rescaled spaces $(X,r_i^{-1}\dist,\meas_x^{r_i},x)$ converge to $(Y,\varrho,\mu,y)$ in the pointed measured Gromov-Hausdorff topology;
		\item[(b)] $F$ is a set of locally finite perimeter in $Y$ with $\mu(F)>0$ and, if $r_i$ are as in $\rm(a)$, then the sequence $f_i=\chi_E$ converges in $L^1_{\loc}$ to $\chi_F$.
	\end{itemize}
\end{definition}

In \cite{AmbrosioBrueSemola19,BruePasqualettoSemola19} the following partial generalization of De Giorgi's classical regularity theorem for boundaries of sets of finite perimeter has been obtained in the setting of $\RCD(K,N)$ metric measure spaces.

\begin{theorem}[Theorem 3.2 and Theorem 4.1 in \cite{BruePasqualettoSemola19}]\label{thm:DGRCD}
	Let $(X,\dist,\meas)$ be an $\RCD(K,N)$ m.m.s.\ with essential dimension $1\le n\le N$, $E\subset X$ be a set of finite perimeter. Then, for $|D\chi_E|$-a.e.\ $x\in X$, there exists $k=1,\ldots,n$ such that
	\[
	\Tan_x(X,\dist,\meas,E)=\left\lbrace (\setR^k,\dist_{eucl},c_k\Leb^k,0^k,\left\lbrace x_k>0\right\rbrace )\right\rbrace\, .
	\]
Moreover, setting 
\[
\mathcal{F}_kE:= \left\lbrace x\in X\, :\, \Tan_x(X,\dist,\meas,E)= \left\lbrace (\setR^k,\dist_{eucl},c_k\Leb^k,0^k,\left\lbrace x_k>0\right\rbrace )\right\rbrace \right \rbrace\, ,
\]
it holds that $\mathcal{F}_kE$ is $(\abs{D\chi_E}, k)$-rectifiable for any $k=1,\dots, n$.
\end{theorem}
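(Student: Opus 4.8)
The plan is to run the blow-up scheme of \cite{AmbrosioBrueSemola19,BruePasqualettoSemola19}, which transposes to the $\RCD$ setting the three steps of De Giorgi's theorem \cite{DeGiorgi54,DeGiorgi55}: compactness of the rescaled configurations, identification of every blow-up as a Euclidean half-space of some dimension $k\le n$, and rectifiability of $\mathcal F_kE$ via bi-Lipschitz charts. First I would fix $\abs{D\chi_E}$-a.e.\ $x\in X$ enjoying all the relevant good-point properties. By \autoref{prop:asymdoub}, the Lebesgue differentiation theorem for $\abs{D\chi_E}$, and the relative isoperimetric inequality on PI spaces, one may assume that $0<\liminf_r\meas(E\cap B_r(x))/\meas(B_r(x))$ and $\limsup_r\meas(E\cap B_r(x))/\meas(B_r(x))<1$, that the rescaled perimeters of $E$ are locally uniformly bounded (the density estimates of \cite{AmbrosioBrueSemola19}), and that $x$ is a Lebesgue point, with respect to $\abs{D\chi_E}$, of the densities $\nabla u_a\cdot\nabla u_b$ and of $\nu_E\cdot\nabla u_a$ for a fixed countable family of harmonic functions (using \autoref{lemma:gradharm}). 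Given any $r_i\downarrow 0$, Gromov compactness and stability of the $\RCD(K,N)$ condition provide, up to a subsequence, a pmGH-limit $(Y,\varrho,\mu,y)$ which is $\RCD(0,N)$, and the lower semicontinuity and compactness of the perimeter functional under pmGH convergence \cite{AmbrosioHonda17,AmbrosioDiMarino14} produce a set $F\subset Y$ of locally finite perimeter with $\mu(F\cap B_1(y))>0$ to which $\chi_E$ converges in $L^1_{\loc}$; hence $\Tan_x(X,\dist,\meas,E)\neq\emptyset$.

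The heart of the proof is to show that every such $(Y,\varrho,\mu,y,F)$ is a $k$-dimensional Euclidean half-space. I would take the largest $k$ for which, along some $r_i\downarrow 0$, there exist harmonic $\delta_i$-splitting maps $u^{(i)}\colon B_{r_i}(x)\to\setR^k$ (\autoref{def:splitting maps}) with $\fint_{B_{r_i}(x)}\abs{\nabla u^{(i)}_a\cdot\nabla u^{(i)}_b-\delta_{ab}}\di\abs{D\chi_E}\to 0$; passing these maps to the blow-up and invoking the splitting theorem \cite{Gigli13} forces $Y$ to split $k$ Euclidean factors isometrically, $Y=\setR^k\times Y'$. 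An iterated-tangent argument — tangents of $(Y,F)$ at $\abs{D\chi_F}$-a.e.\ point recenter to elements of $\Tan_x(X,\dist,\meas,E)$, and the perimeter stays asymptotically doubling throughout — combined with the maximality of $k$ lets one reduce to the case $Y=\setR^k$ with $F$ a cone with vertex $0$. At that stage the Euclidean De Giorgi theorem applies on $\setR^k$: the reduced boundary of the cone $F$ is non-empty, its blow-up there is a half-space, and $0$-homogeneity of $F$ upgrades this to $F=\{x_k>0\}$ up to an isometry, while the normalization of $\meas_x^r$ identifies the limit measure as $c_k\Leb^k$. Uniqueness of the full blow-up — so that $\Tan_x(X,\dist,\meas,E)$ is a singleton — follows because the half-space is orthogonal to $\nu(x):=\lim_r\fint_{B_r(x)}\nu_E\cdot\nabla u\di\abs{D\chi_E}$, which is well defined by the Lebesgue-point choice of $x$ and does not depend on the blow-up sequence. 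Finally $k\le n$, since the dimension of a Euclidean tangent of an $\RCD(K,N)$ space cannot exceed its essential dimension, by the structure theory \cite{BrueSemola20,MondinoNaber19} (see \autoref{thm:structure theory}).

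For the rectifiability of $\mathcal F_kE$, fix $k\in\{1,\dots,n\}$ and $x\in\mathcal F_kE$. For small $r$, the previous step provides a harmonic $\delta$-splitting map $u=(u_1,\dots,u_k)\colon B_r(x)\to\setR^k$ whose blow-up is the identity of $\setR^k$ and along which $\mathcal F_kE\cap B_r(x)$ blows up to the hyperplane $\{x_k=0\}$. Since $\nabla u_a\cdot\nabla u_b\to\delta_{ab}$ in $L^1(\abs{D\chi_E})$ and $u$ is $C(N)$-Lipschitz, the same Lipschitz-to-Lipschitz estimate used for the rectifiability of $\mathcal R_n$ in \cite{MondinoNaber19} shows that $u$ is bi-Lipschitz onto its image when restricted to a Borel set $G\subset\mathcal F_kE\cap B_r(x)$ with $\abs{D\chi_E}\big((\mathcal F_kE\cap B_r(x))\setminus G\big)$ as small as one wishes. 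By the asymptotic doubling of $\abs{D\chi_E}$ and a Vitali-type covering argument, countably many such pieces exhaust $\mathcal F_kE$ up to an $\abs{D\chi_E}$-negligible set, which yields $(\abs{D\chi_E},k)$-rectifiability.

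The step I expect to be the main obstacle is the half-space identification in the second paragraph, for two reasons. First, showing that $F$ is \emph{exactly} a half-space, rather than merely a set of locally finite perimeter with a cone structure, requires the iterated blow-up together with a rigidity statement playing the role of De Giorgi's hard blow-up theorem, and careful bookkeeping that the perimeter measure stays asymptotically doubling at every stage (so that blow-ups of blow-ups are again blow-ups). Second — and this is the genuinely new difficulty with respect to the Euclidean theory — every good-point selection and every non-degeneracy statement about the splitting maps must be carried out with respect to $\abs{D\chi_E}$, which is in general mutually singular with $\meas$; the $\meas$-a.e.\ statements of the $\RCD$ calculus do not transfer directly, and one has to lean on the asymptotic doubling of the perimeter measure (\autoref{prop:asymdoub}) to recover the needed Lebesgue-point arguments.
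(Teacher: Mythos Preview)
The present paper does not prove this theorem: it is stated in the preliminaries as a citation of Theorems~3.2 and~4.1 of \cite{BruePasqualettoSemola19}, with no proof given here. So there is nothing in this paper to compare your proposal against.

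That said, your sketch follows broadly the strategy of the cited references, with one notable discrepancy. You propose to obtain \emph{uniqueness} of the blow-up by arguing that the limiting half-space is orthogonal to the pointwise vector $\nu(x)=\lim_r\fint_{B_r(x)}\nu_E\cdot\nabla u\,\di|D\chi_E|$, which is independent of the blow-up sequence. This is precisely what the present paper achieves via the notion of \emph{good coordinates} (\autoref{def:goodcord}, \autoref{prop:harmcoord}, \autoref{prop:blowupsandharmcoord}), but it is explicitly \emph{not} how \cite{AmbrosioBrueSemola19,BruePasqualettoSemola19} argued; as the authors remark after \autoref{prop:harmcoord}, ``In \cite{AmbrosioBrueSemola19,BruePasqualettoSemola19} there was the necessity to argue differently, due to the absence of a Besicovitch differentiation theorem and of a notion of distributional derivative.'' In other words, your uniqueness step presupposes a Gauss--Green formula and a well-defined normal $\nu_E$ in $L^2_E(TX)$, which in \cite{BruePasqualettoSemola19} are established only \emph{after} the blow-up analysis, not before. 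The original proof of uniqueness relies instead on iterated tangents and the rigidity of the half-space among $\RCD(0,N)$ perimeter-minimizing cones, together with a careful propagation of splitting maps down the scales; the pointwise normal interpretation is a consequence, not an input.
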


In \cite{BruePasqualettoSemola19} a notion of \emph{restriction} of the tangent module $L^2(TX)$ over the boundary of a set of finite perimeter has been introduced. We recall below the relevant terminology.
Let \(E\subset X\) be a given set of finite perimeter. As we proved in \cite{BruePasqualettoSemola19}, it holds
that \(|D\chi_E|\ll{\rm Cap}\), thus in particular we have a natural trace operator \({\rm tr}_E\colon H^{1,2}(X)\to L^0(|D\chi_E|)\)
over the boundary of \(E\), obtained by sending each function \(f\in H^{1,2}(X)\) to the equivalence class (up to
\(|D\chi_E|\)-a.e.\ equality) of any quasi-continuous representative of \(f\). Concerning vector fields, we have the following result.

\begin{theorem}[Tangent module over \(\partial E\), Theorem 2.2 in \cite{BruePasqualettoSemola19}]\label{thm:tg_mod_over_bdry}
Let \((X,\dist,\meas)\) be an \(\RCD(K,N)\) space. Let \(E\subset X\)
be a set of finite perimeter.
Then there exists a unique couple \(\big(L^2_E(TX),\bar\nabla\big)\) -- where
\(L^2_E(TX)\) is an \(L^2(|D\chi_E|)\)-normed \(L^\infty(|D\chi_E|)\)-module
and \(\bar\nabla:\,{\rm Test}(X)\to L^2_E(TX)\) is linear -- such that:
\begin{itemize}
\item[\(\rm i)\)] The equality \(|\bar\nabla f|=\tr_E(|\nabla f|)\) holds
\(|D\chi_E|\)-a.e.\ for every \(f\in{\rm Test}(X)\).
\item[\(\rm ii)\)] \(\big\{\sum_{i=1}^n\chi_{E_i}\bar\nabla f_i\;
\big|\;(E_i)_{i=1}^n\text{ Borel partition of }X,
\,(f_i)_{i=1}^n\subset{\rm Test}(X)\big\}\) is dense in \(L^2_E(TX)\).
\end{itemize}
The space \(L^2_E(TX)\) is called
\emph{tangent module over the boundary of \(E\)} and \(\bar\nabla\)
is the \emph{gradient}.
\end{theorem}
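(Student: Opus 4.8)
The plan is to realise $L^2_E(TX)$ through the standard ``module generated by a prescribed linear operator'' scheme, taking advantage of the capacitary tangent module $L^0_{\rm Cap}(TX)$ recalled above and of the fact, established in \cite{BruePasqualettoSemola19}, that $|D\chi_E|\ll\mathrm{Cap}$. \emph{Uniqueness} is the point to settle first, and the key observation is that the pointwise norm of a generic element of the dense class in (ii) is intrinsic. Indeed, let $(\mathscr M_1,\bar\nabla_1)$ and $(\mathscr M_2,\bar\nabla_2)$ both satisfy (i) and (ii); if $(E_i)_{i=1}^n$ is a Borel partition of $X$ and $f_1,\dots,f_n\in\Test(X)$, then in any normed module the $\chi_{E_i}$ are mutually orthogonal idempotents, so for $k=1,2$
\[
\Big|\sum_{i=1}^n\chi_{E_i}\bar\nabla_k f_i\Big|^2=\sum_{i=1}^n\chi_{E_i}\,|\bar\nabla_k f_i|^2=\sum_{i=1}^n\chi_{E_i}\,\tr_E(|\nabla f_i|)^2\qquad|D\chi_E|\text{-a.e.},
\]
using (i) in the last equality. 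Hence the assignment $\sum_i\chi_{E_i}\bar\nabla_1 f_i\mapsto\sum_i\chi_{E_i}\bar\nabla_2 f_i$ is a well-posed, $L^\infty(|D\chi_E|)$-linear, pointwise-norm-preserving bijection between the dense subspaces furnished by (ii); by completeness it extends uniquely to a module isomorphism intertwining $\bar\nabla_1$ and $\bar\nabla_2$.

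For \emph{existence} I would obtain $L^2_E(TX)$ as the $L^2$-part of a localisation of $L^0_{\rm Cap}(TX)$. Since $|D\chi_E|\ll\mathrm{Cap}$, there is a projection $\pi\colon L^0(\mathrm{Cap})\to L^0(|D\chi_E|)$ and an induced module projection $\bar\pi$ onto the $L^0(|D\chi_E|)$-normed module $\big(L^0_{\rm Cap}(TX)\big)_{|D\chi_E|}$, as in the discussion of $\mathscr M_\meas$ above. Setting $\bar\nabla f\coloneqq\bar\pi(\bar\nabla_{\rm Cap}f)$ for $f\in\Test(X)$, the very definitions of the capacitary gradient and of $\tr_E$ give $|\bar\nabla f|=\tr_E(|\nabla f|)$, which lies in $L^\infty(|D\chi_E|)\subset L^2(|D\chi_E|)$ because $|\nabla f|\in L^\infty(X)$ and $|D\chi_E|$ is finite; this is precisely (i). One then defines $L^2_E(TX)\coloneqq\big\{v\in\big(L^0_{\rm Cap}(TX)\big)_{|D\chi_E|}:|v|\in L^2(|D\chi_E|)\big\}$, with multiplication restricted to $L^\infty(|D\chi_E|)$, and checks by routine verification that this is an $L^2(|D\chi_E|)$-normed $L^\infty(|D\chi_E|)$-module (it is even Hilbert, being inherited from $L^0_{\rm Cap}(TX)$).

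It remains to verify (ii). Since $\{\bar\nabla_{\rm Cap}f:f\in\Test(X)\}$ generates $L^0_{\rm Cap}(TX)$ and $\bar\pi$ is continuous with dense image, $\{\bar\nabla f:f\in\Test(X)\}$ generates the localisation; approximating the $L^0(|D\chi_E|)$-coefficients by simple functions and using the linearity of $\bar\nabla$ shows that the finite sums $\sum_i\chi_{E_i}\bar\nabla f_i$ over Borel partitions are dense for the $L^0$-distance. To upgrade this to density in the $L^2(|D\chi_E|)$-module norm on an element $v\in L^2_E(TX)$, I would apply the usual truncation trick: given an $L^0$-approximating sequence $w_j$ of such finite sums, replace it by $\chi_{\{|w_j|\le|v|+1\}}w_j$ — still of the required form after refining the partition — pass to an a.e.-convergent subsequence, and invoke dominated convergence.

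The genuine difficulty lies not in this essentially formal construction but in the input $|D\chi_E|\ll\mathrm{Cap}$, which holds on $\RCD(K,N)$ spaces but fails on general PI spaces and is exactly what makes the trace operator $\tr_E$ well defined on $H^{1,2}(X)$, and hence forces the statement. Alongside it one uses the good behaviour of quasi-continuous representatives on $L^0_{\rm Cap}(TX)$, in particular that the Gram matrices $\big(\tr_E\langle\nabla f_i,\nabla f_j\rangle\big)_{ij}$ are positive semidefinite $|D\chi_E|$-a.e., obtained by propagating the $\meas$-a.e.\ positivity valid in $L^0(TX)$ to the capacitary level along a countable dense family of rational linear combinations of the $\nabla f_i$. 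Granting these facts — all available from the theory of $L^0_{\rm Cap}(TX)$ recalled above — the only remaining step is the $L^0$-to-$L^2$ density argument of the previous paragraph, which is routine.
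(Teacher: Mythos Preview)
Your proposal is correct and follows essentially the same approach as the paper. The paper does not give a self-contained proof of this statement (it is recalled from \cite{BruePasqualettoSemola19}), but it does indicate right after the theorem that \(L^2_E(TX)\) is obtained as the space of \(2\)-integrable sections of the quotient \(L^0_E(TX)\coloneqq L^0_{\rm Cap}(TX)_{|D\chi_E|}\), which is exactly your existence construction; your uniqueness argument via the pointwise-norm computation on the dense simple-vector-field class is the standard one.
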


The space \(L^2_E(TX)\) was in fact obtained as the family of all (\(2\)-integrable) sections of the quotient
\[
L^0_E(TX)\coloneqq L^0_{\rm Cap}(TX)_{|D\chi_E|},
\]
where we adopt the notation for quotient modules that we introduced before \autoref{lem:local_dim_drops};
this comment will play a role in \autoref{rmk:equiv_nu_E=nu_F}. Observe also that \(L^2_E(TX)\) is a Hilbert module.

\medskip

The notion of restriction of the tangent module over the boundary of a set of finite perimeter is a key tool to prove a Gauss-Green integration by parts formula in this setting, for sufficiently regular vector fields.  

\begin{theorem}[Theorem 2.4 in \cite{BruePasqualettoSemola19}]\label{thm:GGRCDsmooth}
Let $(X,\dist,\meas)$ be an $\RCD(K,N)$ metric measure space and let $E\subset X$ be a set with finite perimeter and finite measure. Then there exists a unique vector field $\nu_E\in L^2_E(TX)$ such that $\abs{\nu_E}=1$ holds $\Per_E$-a.e.\ and
\begin{equation*}
\int _E\div v\di\meas
=-
\int \langle \mathrm{tr}_E v,\nu_E \rangle \di\Per_E\, , \quad\text{for any $v\in H^{1,2}_C(TX)\cap D(\div )$ such that $\abs{v}\in L^{\infty}(\meas)$}\, .
\end{equation*}
\end{theorem}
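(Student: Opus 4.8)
The plan is to produce $\nu_E$ by a representation argument in the Hilbert module $L^2_E(TX)$, after reducing to test vector fields. I would first observe that it suffices to prove the identity (and the sharp estimate below) for $v\in\TestV(X)$: for such $v$ one has $\div v\in L^2(\meas)$ and $|v|\in L^\infty(\meas)$ with a quasi-continuous representative (since $\langle v,v\rangle\in H^{1,2}(X)$), and a general admissible $v$ is then recovered by density of $\TestV(X)$ in $H^{1,2}_C(TX)$ together with the continuity of $\div$ and of the trace map $\tr_E$ on $H^{1,2}_C(TX)$. Next I would fix a recovery sequence $(f_i)\subset\Lipbs(X)$ for the relaxed perimeter of $E$ — $0\le f_i\le 1$, $f_i\to\chi_E$ in $L^1(\meas)$, $\int_X|\nabla f_i|\,\di\meas\to\abs{D\chi_E}(X)$ — noting that lower semicontinuity of the perimeter on open sets plus convergence of total masses gives $|\nabla f_i|\meas\weakstarto\abs{D\chi_E}$, and also $f_i\to\chi_E$ in $L^2(\meas)$ because $|f_i-\chi_E|^2\le|f_i-\chi_E|$. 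Testing the defining identity of $\div$ against $g f_i\in H^{1,2}(X)$ for $g\in\Test(X)$ and using the Leibniz rule $\nabla(gf_i)=g\nabla f_i+f_i\nabla g$ yields
\[
\int_X g f_i\,\div v\,\di\meas=-\int_X g\,\nabla f_i\cdot v\,\di\meas-\int_X f_i\,\nabla g\cdot v\,\di\meas\, ;
\]
letting $i\to\infty$ (the outer terms converge using $f_i\to\chi_E$ in $L^2$, $\div v\in L^2(\meas)$, $\nabla g\cdot v\in L^\infty(\meas)$, and $\meas(E)<\infty$) shows that $\Lambda_v(g):=\lim_i\int_X g\,\nabla f_i\cdot v\,\di\meas$ exists, does not depend on the recovery sequence, and equals $-\int_E\div(gv)\,\di\meas$; in particular $\Lambda_v(1)=-\int_E\div v\,\di\meas$.

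The heart of the argument is the estimate $|\Lambda_v(g)|\le\int_X|g|\,\tr_E(|v|)\,\di\abs{D\chi_E}$ (functions in $H^{1,2}(X)$ being integrated against $\abs{D\chi_E}$ via their quasi-continuous representatives). Since $|g\,\nabla f_i\cdot v|\le|g|\,|v|\,|\nabla f_i|$ pointwise, it suffices to check $\limsup_i\int_X|g|\,|v|\,\di(|\nabla f_i|\meas)\le\int_X|g|\,\tr_E(|v|)\,\di\abs{D\chi_E}$. This is exactly the step where the weak-$*$ convergence $|\nabla f_i|\meas\weakstarto\abs{D\chi_E}$ must be passed through the bounded, quasi-continuous integrand $|g||v|$: one uses that $\abs{D\chi_E}\ll\mathrm{Cap}$, so $\abs{D\chi_E}$ does not charge the small-capacity set on which $|g||v|$ fails to be continuous, together with an equi-integrability bound preventing the masses $|\nabla f_i|\meas$ from concentrating there, and the limit is by definition the integral of $|g|\tr_E(|v|)$ against $\abs{D\chi_E}$. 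Granting this estimate, for each $v\in\TestV(X)$ the functional $g\mapsto\Lambda_v(g)$ depends only on $\tr_E v$ (if $\tr_E v=\tr_E w$ then $\Lambda_{v-w}\equiv 0$) and is represented by some $h_v\in L^\infty(\abs{D\chi_E})$ with $|h_v|\le\tr_E(|v|)=|\tr_E v|$. Since $\{\tr_E v:v\in\TestV(X)\}$ generates the Hilbert module $L^2_E(TX)$ by \autoref{thm:tg_mod_over_bdry}, the assignment $\tr_E v\mapsto h_v$ extends to an element of its dual, hence to $\langle\,\cdot\,,\nu_E\rangle$ for a unique $\nu_E\in L^2_E(TX)$ with $|\nu_E|\le 1$; then $-\int_E\div v\,\di\meas=\Lambda_v(1)=\int\langle\tr_E v,\nu_E\rangle\,\di\abs{D\chi_E}$ for $v\in\TestV(X)$, and for all admissible $v$ by the reduction above. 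Uniqueness of $\nu_E$ is immediate from the density just quoted.

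The remaining — and hardest — point is to improve $|\nu_E|\le 1$ to $|\nu_E|=1$ $\abs{D\chi_E}$-a.e., for which I would invoke the structure theory. At $\abs{D\chi_E}$-a.e.\ $x$ one has $x\in\mathcal{F}_kE$ for some $1\le k\le n$, so the rescalings of $(X,\dist,\meas,E)$ at $x$ converge, along suitable radii, to $(\setR^k,\dist_{eucl},c_k\Leb^k,0^k,\{x_k>0\})$; moreover there are harmonic maps $u=(u_1,\dots,u_k)$ on $B_r(x)$ that are $\delta_r$-splitting with $\delta_r\to 0$ and satisfy $\fint_{B_r(x)}|\nabla u_a\cdot\nabla u_b-\delta_{ab}|\,\di\abs{D\chi_E}\to 0$. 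Applying the Gauss-Green identity just obtained to suitable cut-offs of the harmonic, hence divergence-free, fields $\nabla u_a$, one identifies the blow-up of $\nu_E$ with the Euclidean inner unit normal $\mathbf n$ of the limiting half-space, in the sense that $\fint_{B_r(x)}\nu_E\cdot\nabla u_a\,\di\abs{D\chi_E}\to\mathbf n_a$ with $\sum_a\mathbf n_a^2=1$; since $\tr_E(\nabla u_a)$ is asymptotically orthonormal and $\nu_E\approx\sum_a(\nu_E\cdot\nabla u_a)\,\tr_E(\nabla u_a)$, this gives $\fint_{B_r(x)}|\nu_E|^2\,\di\abs{D\chi_E}\to 1$ at $\abs{D\chi_E}$-a.e.\ $x$, and combining with $|\nu_E|\le 1$ and the Lebesgue differentiation theorem for the asymptotically doubling measure $\abs{D\chi_E}$ (\autoref{prop:asymdoub}) forces $|\nu_E|=1$ $\abs{D\chi_E}$-a.e.; an alternative packaging is a dual characterization $\abs{D\chi_E}(X)=\sup\{-\int_E\div v\,\di\meas:v\in\TestV(X),\,|v|\le 1\}$, whose nontrivial inequality would again be proved by a covering argument exploiting the local half-space structure. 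The two limiting passages against $\abs{D\chi_E}$ — the one yielding the bound $|\nu_E|\le 1$ and the one yielding $|\nu_E|=1$ — are where the real work lies, both requiring that weak-$*$ and $L^2$ convergences be reconciled with the capacitary nature of $\abs{D\chi_E}$; everything else (cut-offs, density of $\TestV(X)$, Hilbert-module duality) is routine.
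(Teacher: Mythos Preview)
This statement is not proved in the present paper: it is quoted verbatim as Theorem 2.4 of \cite{BruePasqualettoSemola19} and used as a preliminary tool, with no argument supplied here. So there is nothing in this paper to compare your proposal against.

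That said, your outline is in the right spirit but diverges from the actual proof in \cite{BruePasqualettoSemola19} in one structural respect and has one acknowledged soft spot. The original argument does not use a generic Lipschitz recovery sequence $(f_i)$ for the perimeter; it regularizes via the heat semigroup, setting $\nu_t\coloneqq\nabla P_t\chi_E/(P_t^*|D\chi_E|)$ and $\mu_t\coloneqq P_t^*|D\chi_E|\,\meas$, and exploits the Bakry--\'Emery contraction estimate to control $\int\big|1-e^{Kt}|\nu_t|\big|\,\di\mu_t\to 0$ (this is exactly the mechanism you can see reused later in the present paper, in the proof of the sharp trace bound in \autoref{sec5}). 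The heat-flow route is what makes the passage to the limit against the quasi-continuous integrand $|v|$ clean: $P_t$ maps into $H^{1,2}$, so one is really testing $|D\chi_E|$ against functions that admit quasi-continuous representatives, and the trace appears naturally. By contrast, with an arbitrary Lipschitz recovery sequence the step you flag --- upgrading $|\nabla f_i|\meas\weakstarto|D\chi_E|$ from continuous to merely quasi-continuous test functions --- genuinely requires extra input (some uniform capacitary control on the approximating measures) that you do not supply; this is not automatic and is the reason the heat-flow regularization is preferred.

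Your plan for $|\nu_E|=1$ via the blow-up/splitting-map structure is essentially what is done in \cite{BruePasqualettoSemola19}, and indeed the logical order there is: first establish the Gauss--Green identity with $|\nu_E|\le 1$, then develop the De Giorgi-type structure theory, and only afterwards conclude $|\nu_E|=1$. So that part of your sketch is on target, though it is worth being explicit that the rectifiability results you invoke are themselves proved \emph{using} the weaker form of the Gauss--Green formula, so one must be careful about the order of dependencies.
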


\begin{remark}
For the sake of clarity, let us remark that for a smooth domain on a smooth Riemannian manifold, the vector field $\nu_E$ above would correspond to the interior unit normal vector field.
\end{remark}
Let us remark that one of the goals of the present note is to lower the regularity assumptions on the vector fields in the Gauss-Green formula above.

\subsection{Geodesic plans and functions with bounded variation}

	In this section we present a characterization of BV functions in terms of {\it test plans} concentrated on geodesics. It will be relevant for the purpose of proving that boundaries of sets with finite perimeter have constant dimension.
\medskip

Let us begin by recalling the concept of test plan,
which was introduced in \cite{AmbrosioGigliSavare13}.
Given a metric measure space \((X,\dist,\meas)\) and a Borel probability
measure \(\ppi\) on the space of continuous curves \(C([0,1],X)\), we say that \(\ppi\) is an \emph{\(\infty\)-test plan}
if it is concentrated on an equi-Lipschitz family of curves and there exists \(C>0\) such that
\[
({\rm e}_t)_*\ppi\leq C\meas\quad\text{ for every }t\in[0,1]\, ,
\]
where \({\rm e}_t\colon C([0,1],X)\to X\) stands for \(\gamma\mapsto{\rm e}_t(\gamma)\coloneqq\gamma_t\).
The minimal such \(C\) is denoted by \({\rm Comp}(\ppi)\), while \(\Lip(\ppi)\) stands for the
minimal \(L\geq 0\) such that \(\ppi\) is concentrated on \(L\)-Lipschitz curves.
\medskip

The following notion of function having bounded \(\Pi\)-variation, where \(\Pi\) is an arbitrary family
of \(\infty\)-test plans, has been recently proposed in \cite[Definition 2.7]{NobiliPasqualettoSchultz21},
as a generalization of the notion of BV space via test plans introduced in \cite[Section 5.3]{AmbrosioDiMarino14}.

\begin{definition}\label{def:BV}
Let \((X,\dist,\meas)\) be a metric measure space. Let \(\Pi\) be a
family of \(\infty\)-test plans on \(X\). Let \(f\in L^1(\meas)\)
be given. Then we declare that \(f\in{\BV}_\Pi(X)\) provided:
\begin{itemize}
\item[\(\rm i)\)] Given any \(\ppi\in\Pi\), it holds that
\(f\circ\gamma\in{\BV}(0,1)\) for \(\ppi\)-a.e.\ \(\gamma\).
\item[\(\rm ii)\)] There exists a finite Borel measure \(\mu\geq 0\)
on \(X\) such that for every \(\ppi\in\Pi\) it holds that
\begin{equation}\label{eq:def_BV}
\int\gamma_\#|D(f\circ\gamma)|(B)\,\di\ppi(\gamma)\leq
{\rm Comp}(\ppi){\Lip}(\ppi)\mu(B),\quad\text{ for every }
B\subset X\text{ Borel.}
\end{equation}
\end{itemize}
The minimal measure \(\mu\geq 0\) satisfying \eqref{eq:def_BV}
is denoted by \(|Df|_\Pi\).
\end{definition}

The following theorem follows from the results in \cite{NobiliPasqualettoSchultz21}.
It says that, on finite-dimensional \(\RCD\) spaces, (a countable family of) \(\infty\)-test plans
concentrated on geodesics is sufficient to recover all BV functions and their total variation
measures (up to a given multiplicative constant).

\begin{theorem}\label{thm:master_tp}
Let \((X,\dist,\meas)\) be an \({\RCD}(K,N)\) space, where \(N<\infty\).
Then there exists a sequence \(\Pi=(\ppi_i)_i\) of \(\infty\)-test plans
on \(X\) concentrated on \({\rm Geo}(X)\) such that
\({\BV}_\Pi(X)={\BV}(X)\) and
\begin{equation}\label{eq:ineq_master_tp}
|Df|_\Pi\leq|Df|\leq 2^N|Df|_\Pi\, ,\quad\text{ for every }f\in\BV(X)\, .
\end{equation}
\end{theorem}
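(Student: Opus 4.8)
The plan is to split the two-sided bound \eqref{eq:ineq_master_tp} into the inequality $|Df|_\Pi\le|Df|$, which holds for \emph{any} family of $\infty$-test plans, and the inequality $|Df|\le 2^N|Df|_\Pi$, which forces a careful choice of $\Pi$ and genuinely uses $N<\infty$. For the first one, fix $f\in\BV(X)$; by the general calculus on metric measure spaces (see \cite{AmbrosioDiMarino14} and \cite{NobiliPasqualettoSchultz21}) one knows that $f\circ\gamma\in\BV(0,1)$ for $\ppi$-a.e.\ $\gamma$ whenever $\ppi$ is an $\infty$-test plan, and that the superposition of the one-dimensional derivatives obeys $\int\gamma_\#|D(f\circ\gamma)|(B)\,\di\ppi(\gamma)\le\mathrm{Comp}(\ppi)\,\Lip(\ppi)\,|Df|(B)$ for every Borel $B\subset X$ (the factor $\Lip(\ppi)$ recording the speed of the curves, $\mathrm{Comp}(\ppi)$ the compression of the marginals). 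Hence \eqref{eq:def_BV} holds with $\mu=|Df|$, so $f\in\BV_\Pi(X)$ with $|Df|_\Pi\le|Df|$ for \emph{every} $\Pi$, and the real content of the statement is the existence of a \emph{countable} family of geodesic plans achieving the reverse estimate with the dimensional constant.

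For the construction I would use the plans coming from $L^2$-optimal transport between absolutely continuous measures with bounded density and bounded support. Fix a countable dense set $\{x_j\}\subset X$ and rational parameters; for each admissible configuration consider the (unique) $W_2$-geodesic from a normalized restriction $\meas\res B_r(x_j)$ to a normalized restriction $\meas\res B_s(x_\ell)$ and lift it to an optimal geodesic plan $\ppi$ on $C([0,1],X)$. Since $\RCD(K,N)$ spaces are essentially non-branching, $\ppi$ is concentrated on $\mathrm{Geo}(X)$; since the endpoint densities are bounded, the $\CD(K,N)$ Jacobian estimates -- equivalently, a Bishop--Gromov-type volume comparison, which at small scales degenerates to the $\CD(0,N)$ one -- bound the density of every intermediate marginal $(\e_t)_\#\ppi$, so $\ppi$ is an $\infty$-test plan with $\mathrm{Comp}(\ppi)$ and $\Lip(\ppi)$ under control. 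Collecting all such plans (and suitable rescalings) produces the countable family $\Pi=(\ppi_i)_i$ concentrated on $\mathrm{Geo}(X)$, and the constant $2^N$ in \eqref{eq:ineq_master_tp} is precisely the one emerging from this volume-comparison control on the compression, as in \cite{NobiliPasqualettoSchultz21}.

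The substantial step is then $|Df|\le 2^N|Df|_\Pi$, i.e.\ that this countable family of \emph{geodesic} plans with controlled compression already detects the full relaxed total variation of an arbitrary $f\in\BV_\Pi(X)$. Following \cite{NobiliPasqualettoSchultz21}, one exploits that $\RCD(K,N)$ spaces are PI spaces: multiplying $f$ by Lipschitz cutoffs localizes the problem, and the one-dimensional $\BV$ information of $f$ along $\ppi_i$-a.e.\ geodesic, averaged over the transport plans and combined with the $(1,1)$-Poincar\'e and doubling inequalities, yields locally Lipschitz approximations of the truncated functions with $L^1$-equibounded slopes; letting the cutoffs invade $X$ gives $f\in\BV(X)$ together with the quantitative bound. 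The reduction to a \emph{countable} $\Pi$ -- if one starts instead from the analogous statement for the full family of geodesic $\infty$-test plans -- is a soft separability argument: the infima defining the total-variation measure on the members of a countable basis of balls, evaluated on a countable $\BV$-dense set of functions, are approximated along countably many of the competitor plans above, whose union then works for all $f$ simultaneously by lower semicontinuity and density. The main obstacle is exactly this reverse inequality with the sharp dimensional constant, and it rests on the combination of essential non-branching, the $\CD(K,N)$ Jacobian/Bishop--Gromov estimates, and the local-to-global Poincar\'e/doubling machinery imported from \cite{NobiliPasqualettoSchultz21}.
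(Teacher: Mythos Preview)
Your sketch is broadly correct and aligned with the paper's treatment, but it is worth noting that the paper does not give a self-contained argument here: it simply records that \autoref{thm:master_tp} follows from \cite[Remark 3.8]{NobiliPasqualettoSchultz21} (for the full family \(\Pi_{\rm Geo}\) of geodesic \(\infty\)-test plans, giving \(\BV_{\Pi_{\rm Geo}}(X)=\BV(X)\) and \(|Df|_{\Pi_{\rm Geo}}\le|Df|\le 2^N|Df|_{\Pi_{\rm Geo}}\)), together with \cite[Theorem 1.3]{Deng20} for non-branching and \cite[Theorem 3.9, Remark 3.10]{NobiliPasqualettoSchultz21} for the reduction to a countable subfamily. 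Your outline is essentially an attempt to unpack what those references do, and the ingredients you identify --- optimal geodesic plans between normalized restrictions of \(\meas\), \(\CD(K,N)\) density bounds on intermediate marginals to control \({\rm Comp}(\ppi)\), non-branching, and a separability argument for the countable reduction --- are indeed the right ones.

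Two small cautions. First, your attribution of the constant \(2^N\) to ``volume-comparison control on the compression'' is slightly imprecise: the paper itself remarks that when \(K=0\) or \(X\) is compact the inequality sharpens to \(|Df|_\Pi=|Df|\), so the factor \(2^N\) is the price of a localization/rescaling step needed when \(K<0\) on a non-compact space, not an intrinsic feature of the Bishop--Gromov compression bound at small scales. Second, the paragraph invoking the \((1,1)\)-Poincar\'e inequality and ``locally Lipschitz approximations with \(L^1\)-equibounded slopes'' to recover \(|Df|\le 2^N|Df|_\Pi\) is not the mechanism in \cite{NobiliPasqualettoSchultz21}: there the reverse inequality is obtained directly at the level of measures, by choosing geodesic plans whose superposed one-dimensional variations saturate the relaxed total variation on a countable basis of sets, without passing through Lipschitz approximants. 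Your route via PI-space approximation is plausible but would yield a different (and likely worse) constant, so if you want the stated \(2^N\) you should follow the measure-level argument of \cite{NobiliPasqualettoSchultz21} rather than the approximation scheme you describe.
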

A few words about \autoref{thm:master_tp} are in order. Calling \(\Pi_{\rm Geo}\) the
family of those \(\infty\)-test plans on \(X\) that are concentrated on \({\rm Geo}(X)\), we know
from \cite[Remark 3.8]{NobiliPasqualettoSchultz21} that \(\BV_{\Pi_{\rm Geo}}(X)=\BV(X)\) and
\(|Df|_{\Pi_{\rm Geo}}\leq|Df|\leq 2^N|Df|_{\Pi_{\rm Geo}}\) for every \(f\in\BV(X)\).
Here, the fact that \(X\) is non-branching is ensured by \cite[Theorem 1.3]{Deng20}.
Moreover, by using \cite[Theorem 3.9 and Remark 3.10]{NobiliPasqualettoSchultz21} we can select a countable
subfamily \(\Pi\subset\Pi_{\rm Geo}\) verifying the statement of \autoref{thm:master_tp}.\\
Albeit not strictly needed for our purposes, we point out that in the case where
either \(K=0\) or \(X\) is compact, \eqref{eq:ineq_master_tp} improves to the identity
\(|Df|_\Pi=|Df|\) for every \(f\in\BV(X)\). In fact, it is strongly believed that this is
actually the case on every \(\RCD(K,N)\) space. 
\section{Constant dimension along boundaries}\label{sec:constancy}
In this section we prove that the total variation measure of $BV$ functions is concentrated on the regular set $\mathcal{R}_n$, where $n$ is the essential dimension of the space. 
This result, heavily relying on the continuity of tangent cones along geodesics proven recently in \cite{Deng20}, allows us to positively answer one of the questions left open in \cite{BruePasqualettoSemola19}.
We can indeed show that boundaries of sets of finite perimeter are concentrated on $\mathcal{R}_n$, hence they have constant dimension $n-1$. 

An important consequence is that the tangent module $L^2_E(TX)$ over the boundary of a set with finite perimeter $E$ has dimension $n$ as well.

\begin{theorem}\label{thm:tv_on_reg_set}
Let \((X,\dist,\meas)\) be an \({\RCD}(K,N)\) space for some $K\in\setR$ and \(1\le N<\infty\).
Let \(f\in{\BV}(X)\) be given. Then \(|Df|\) is concentrated
on \(\mathcal R_n\), where \(1\le n\leq N\) stands for the essential dimension
of \(X\).
\end{theorem}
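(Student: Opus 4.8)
The plan is to combine the test-plan characterization of $\BV$ functions (\autoref{thm:master_tp}) with the continuity of tangent cones along geodesics (\autoref{prop:a.e.full}). The key observation is the following: since $\meas(X\setminus\mathcal R_n)=0$ by \autoref{thm:structure theory}, every $\infty$-test plan $\ppi$ (which by definition satisfies $(\e_t)_*\ppi\le C\meas$) gives zero mass to curves passing through $X\setminus\mathcal R_n$ at a given time, and in fact by Fubini $\ppi$-a.e.\ curve $\gamma$ satisfies $\gamma_t\in\mathcal R_n$ for $\Leb^1$-a.e.\ $t\in(0,1)$. When $\ppi$ is concentrated on geodesics, \autoref{prop:a.e.full} upgrades this to: $\ppi$-a.e.\ curve $\gamma$ has $\gamma_t\in\mathcal R_n$ for \emph{every} $t\in(0,1)$.

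First I would fix the sequence $\Pi=(\ppi_i)_i$ of $\infty$-test plans concentrated on $\mathrm{Geo}(X)$ provided by \autoref{thm:master_tp}, so that $\BV_\Pi(X)=\BV(X)$ and $|Df|\le 2^N|Df|_\Pi$ for every $f\in\BV(X)$. It then suffices to show that $|Df|_\Pi$ is concentrated on $\mathcal R_n$, i.e.\ that $|Df|_\Pi(X\setminus\mathcal R_n)=0$. By the minimality property defining $|Df|_\Pi$, it is enough to exhibit \emph{some} finite nonnegative Borel measure $\mu$ concentrated on $\mathcal R_n$ that satisfies the defining inequality \eqref{eq:def_BV} for every $\ppi\in\Pi$; then $|Df|_\Pi\le\mu$ (after possibly replacing $\mu$ by $\min$ with $|Df|_\Pi$, or simply invoking minimality on the Borel set $X\setminus\mathcal R_n$).

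The construction of $\mu$ goes as follows. For each $\ppi\in\Pi$, the second step is to show that the measure $B\mapsto\int\gamma_\#|D(f\circ\gamma)|(B)\,\di\ppi(\gamma)$ is itself concentrated on $\mathcal R_n$: indeed, for $\ppi$-a.e.\ $\gamma$ the pushforward $\gamma_\#|D(f\circ\gamma)|$ is supported on $\gamma((0,1))\cup\{\gamma_0,\gamma_1\}$, and the interior $\gamma((0,1))$ lies in $\mathcal R_n$ by the argument above; the two endpoints contribute a set that is $\ppi$-a.e.\ $|D(f\circ\gamma)|$-negligible since $f\circ\gamma\in\BV(0,1)$ has no atoms forced at $0$ or $1$ — more carefully, one restricts attention to the open interval by noting $|D(f\circ\gamma)|(\{0,1\})$ is handled by a standard reparametrization/truncation argument, or one simply observes that $\int\gamma_\#|D(f\circ\gamma)|(\{0,1\})\,\di\ppi$ can be absorbed since $(\e_0)_*\ppi,(\e_1)_*\ppi\ll\meas$ and $\meas(X\setminus\mathcal R_n)=0$, so even the endpoint contributions land in $\mathcal R_n$. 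Hence the measure $\nu:=|Df|_\Pi$ restricted to test against each $\ppi$ already "sees" only $\mathcal R_n$. Concretely: since for every $\ppi\in\Pi$ and every Borel $B\subset X\setminus\mathcal R_n$ we have $\int\gamma_\#|D(f\circ\gamma)|(B)\,\di\ppi(\gamma)=0\le\mathrm{Comp}(\ppi)\mathrm{Lip}(\ppi)\,\big(|Df|_\Pi\res\mathcal R_n\big)(B)$, the measure $|Df|_\Pi\res\mathcal R_n$ is admissible in \eqref{eq:def_BV}, so by minimality $|Df|_\Pi\le|Df|_\Pi\res\mathcal R_n$, forcing $|Df|_\Pi(X\setminus\mathcal R_n)=0$. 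Then $|Df|(X\setminus\mathcal R_n)\le 2^N|Df|_\Pi(X\setminus\mathcal R_n)=0$, which is the claim.

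The main obstacle I anticipate is the careful handling of the endpoints $t=0,1$ of the geodesics: \autoref{prop:a.e.full} only gives regularity of $\gamma_t$ for $t$ in the \emph{open} interval $(0,1)$, and a priori the total variation $|D(f\circ\gamma)|$ could charge $\{0,1\}$. One clean way to bypass this is to use that for an $\infty$-test plan $(\e_0)_*\ppi\ll\meas$ and $(\e_1)_*\ppi\ll\meas$, so $\int\gamma_\#|D(f\circ\gamma)|(\{0\})\,\di\ppi(\gamma)$ and the analogous term at $1$, when tested against $X\setminus\mathcal R_n$, vanish because $\meas(X\setminus\mathcal R_n)=0$ — but one must check that $\gamma\mapsto|D(f\circ\gamma)|(\{0\})$ is controlled by the jump of $f$ at $\gamma_0$ in an integrable way, which follows from the defining bound \eqref{eq:def_BV} applied with $B$ a small neighbourhood and a monotone limit. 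Alternatively, one restricts each test plan to a slightly smaller time interval $[\epsilon,1-\epsilon]$ (still a test plan after affine reparametrization) and lets $\epsilon\downarrow 0$; on $[\epsilon,1-\epsilon]$ \autoref{prop:a.e.full} applies verbatim to the whole curve. Either route is routine once set up, so the essential content of the proof is genuinely the combination of \autoref{thm:master_tp} with \autoref{prop:a.e.full}.
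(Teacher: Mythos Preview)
Your proposal is correct and follows essentially the same route as the paper: combine \autoref{thm:master_tp} with \autoref{prop:a.e.full} to see that for $\ppi_i$-a.e.\ geodesic $\gamma$ one has $\gamma((0,1))\subset\mathcal R_n$, and deduce that the integrated measures $B\mapsto\int\gamma_\#|D(f\circ\gamma)|(B)\,\di\ppi_i(\gamma)$ are concentrated on $\mathcal R_n$; then the inequality $|Df|\le 2^N|Df|_\Pi$ finishes the job. Your endpoint worry is unnecessary: by \autoref{def:BV} one has $f\circ\gamma\in\BV(0,1)$ on the \emph{open} interval, so $|D(f\circ\gamma)|$ is a measure on $(0,1)$ and $\gamma_\#|D(f\circ\gamma)|$ is automatically supported on $\gamma((0,1))$, with no contribution at $\gamma_0,\gamma_1$. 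The paper packages the conclusion slightly differently---it builds an explicit $\sigma$-compact (hence Borel) set $G\subset\mathcal R_n$ via Ulam's lemma and shows $|Df|(X\setminus G)=0$ directly from \eqref{eq:def_BV}---whereas you invoke minimality of $|Df|_\Pi$ against $|Df|_\Pi\res\mathcal R_n$; both are equivalent one-line endgames once the key inclusion $\gamma((0,1))\subset\mathcal R_n$ is in hand.
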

\begin{proof}
Fix a Borel representative \(\bar f\colon X\to\setR\) of \(f\).
Let \((\ppi_i)_i\) be chosen as in \autoref{thm:master_tp}.
Calling \(\e\colon C([0,1],X)\times[0,1]\to X\) the evaluation
map \((\gamma,t)\mapsto\e(\gamma,t)\coloneqq\gamma_t\), for any
\(i\in\setN\) we have that
\(\e_\#(\ppi_i\otimes\mathcal L^1)\leq{\rm Comp}(\ppi_i)\meas\).
Since \(\meas(X\setminus\mathcal R_n)=0\), we deduce that
\[
\gamma_t\in\mathcal R_n,\quad\text{ for }
(\ppi_i\otimes\mathcal L^1)\text{-a.e.\ }(\gamma,t).
\]
Recalling the fact that each \(\ppi_i\) is concentrated on geodesics,
as well as Ulam Lemma,
we can find a \(\sigma\)-compact set
\(\Gamma_i\subset{\rm Geo}(X)\) having the property that for
every \(\gamma\in\Gamma_i\) the following conditions hold:
\[
\bar f\circ\gamma\in{\BV}(0,1),
\quad
\ppi_i(\Gamma_i^c)=0,
\quad
\gamma_t\in\mathcal R_n\;\text{ for }\mathcal L^1\text{-a.e.\ }t\in[0,1].
\]
The last condition implies that \(\gamma_t\in\mathcal R_n\)
for every \(\gamma\in\Gamma_i\) and \(t\in(0,1)\) by \autoref{prop:a.e.full}. Define
\[
G\coloneqq\bigcup_{i\in\setN}G_i,\quad\text{ where we set }
G_i\coloneqq\big\{\gamma_t\;\big|\;\gamma\in\Gamma_i,\,t\in(0,1)\big\}.
\]
Observe that \(G\subset\mathcal R_n\). Moreover,
each set \(G_i\), being the continuous image (under \(e\))
of the \(\sigma\)-compact set \(\Gamma_i\times(0,1)\), is
\(\sigma\)-compact itself and accordingly \(G\) is Borel.
Therefore, it only remains to show that
\(|Df|(X\setminus G)=0\). Since \(\Pi\coloneqq(\ppi_i)_i\)
satisfies \(|Df|\leq 2^N|Df|_\Pi\), we just have to check that
\begin{equation}\label{eq:tv_on_reg_set_aux}
\int\gamma_\#|D(f\circ\gamma)|(X\setminus G)\,\di\ppi_i(\gamma)=0,
\quad\text{ for every }i\in\setN.
\end{equation}
To prove it, notice that \(\gamma^{-1}(X\setminus G_i)\cap(0,1)
=\emptyset\) for every \(\gamma\in\Gamma_i\) by definition of \(G_i\),
so that
\[\begin{split}
\int\gamma_\#|D(f\circ\gamma)|(X\setminus G)\,\di\ppi_i(\gamma)
&=\int_{\Gamma_i}\gamma_\#|D(\bar f\circ\gamma)|(X\setminus G)
\,\di\ppi_i(\gamma)\\
&\leq\int_{\Gamma_i}\gamma_\#|D(\bar f\circ\gamma)|(X\setminus G_i)
\,\di\ppi_i(\gamma)\\
&=\int_{\Gamma_i}|D(\bar f\circ\gamma)|\big(\gamma^{-1}(X\setminus G_i)\big)\,\di\ppi_i(\gamma)=0.
\end{split}\]
This yields \eqref{eq:tv_on_reg_set_aux} and accordingly the statement.
\end{proof}

We shall denote by $\mathcal{H}^h$ the codimension $1$ Hausdorff-type measure on $(X,\dist,\meas)$ build through the Carath\'{e}odory construction with gauge function $h(B_r(x))=\meas(B_r(x))/r$, see \cite[Definition 1.9]{BruePasqualettoSemola19}.
In \cite[Corollary 3.15]{BruePasqualettoSemola19} it was proved that, for a set of locally finite perimeter $E\subset X$,
\begin{equation}\label{eq:reprnoncost}
	\abs{D \chi_E}=\sum_{k=1}^n\frac{\omega_{k-1} }{\omega_k}\mathcal{H}^h\res \mathcal{F}_kE\, ,
\end{equation}
where $1\le n\le N$ denotes the essential dimension of $(X,\dist,\meas)$.

\begin{corollary}\label{cor:repr}
Let \((X,\dist,\meas)\) be an \({\RCD}(K,N)\) space with essential dimension $n\le N$. Let $E\subset X$ be a set of finite perimeter.
Then $\Per_E$ is concentrated on $\mathcal{R}_n$. In particular $\mathcal{F}_k E$ is $|D\chi_E|$-negligible for $k\neq n$ and 
	\begin{equation}\label{eq:reprcdim}
		\abs{D\chi_E}=\frac{\omega_{n-1}}{\omega_n}\mathcal{H}^h\res \mathcal{F}_nE\,.
	\end{equation}
\end{corollary}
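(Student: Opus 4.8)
The plan is to derive \autoref{cor:repr} as a direct consequence of \autoref{thm:tv_on_reg_set} applied to the characteristic function $f=\chi_E$, combined with the already-established structural results for sets of finite perimeter on $\RCD(K,N)$ spaces. First I would observe that since $E$ has finite perimeter and finite measure (or, in the locally finite case, after localizing with a cutoff), we have $\chi_E\in\BV(X,\dist,\meas)$ with $|D\chi_E|=\Per_E$. Applying \autoref{thm:tv_on_reg_set} to $f=\chi_E$ immediately gives that $\Per_E$ is concentrated on the $n$-regular set $\mathcal R_n$, where $n$ is the essential dimension of $X$.

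Next I would translate this concentration statement into the claimed consequences about the sets $\mathcal F_k E$. By \autoref{thm:DGRCD}, for $|D\chi_E|$-a.e.\ $x\in X$ there is some $k\in\{1,\dots,n\}$ with $x\in\mathcal F_k E$; hence $X=\bigcup_{k=1}^n\mathcal F_k E$ up to a $|D\chi_E|$-negligible set, and the $\mathcal F_k E$ are pairwise disjoint by definition. The key point is that for $k<n$ we have $\mathcal F_k E\subset\mathcal R_k$, because at a point of $\mathcal F_k E$ the blow-up of the ambient space is forced to be $(\setR^k,\dist_{eucl},c_k\Leb^k,0^k)$ (the ambient tangent appearing in the quintuple defining $\mathcal F_k E$), so such a point is $k$-regular; in particular $\mathcal F_k E\cap\mathcal R_n=\emptyset$ when $k\ne n$ since the regular sets $\mathcal R_k$ are disjoint. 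Combining this with the concentration of $\Per_E$ on $\mathcal R_n$ yields $|D\chi_E|(\mathcal F_k E)=0$ for every $k\ne n$.

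Finally, for the representation formula \eqref{eq:reprcdim}, I would simply feed the vanishing $|D\chi_E|(\mathcal F_k E)=0$ for $k\ne n$ into the general decomposition \eqref{eq:reprnoncost} from \cite[Corollary 3.15]{BruePasqualettoSemola19}: all terms with $k<n$ drop out, leaving $|D\chi_E|=\frac{\omega_{n-1}}{\omega_n}\mathcal H^h\res\mathcal F_n E$.

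I do not expect a serious obstacle here, since all the analytic work is packaged in \autoref{thm:tv_on_reg_set} (which in turn relies on \autoref{prop:a.e.full} and \autoref{thm:master_tp}) and in the previously known structure theory. The only point requiring a little care is the inclusion $\mathcal F_k E\subset\mathcal R_k$ — i.e.\ that a blow-up point of the set carries a Euclidean blow-up of the ambient space of the corresponding dimension — but this is immediate from \autoref{def:blowupset}, since the first entry of any quintuple in $\Tan_x(X,\dist,\meas,E)$ is required to lie in $\Tan_x(X,\dist,\meas)$, and for $x\in\mathcal F_k E$ this forces $\Tan_x(X,\dist,\meas)=\{(\setR^k,\dist_{eucl},c_k\Leb^k,0)\}$.
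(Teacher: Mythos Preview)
Your proof is correct and follows exactly the route the paper intends: apply \autoref{thm:tv_on_reg_set} to $\chi_E$, observe that $\mathcal F_kE\cap\mathcal R_n=\emptyset$ for $k\neq n$, and then read off \eqref{eq:reprcdim} from the known decomposition \eqref{eq:reprnoncost}. The only imprecision is your claim that \autoref{def:blowupset} alone forces $\Tan_x(X,\dist,\meas)=\{(\setR^k,\dist_{eucl},c_k\Leb^k,0)\}$ for $x\in\mathcal F_kE$: the definition only guarantees $\setR^k\in\Tan_x(X,\dist,\meas)$ (the projection from quintuples to ambient tangents need not be surjective without invoking compactness of blow-ups of finite-perimeter sets), but this weaker statement already gives $x\notin\mathcal R_n$ when $k\neq n$, which is all your argument actually uses.
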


When $(X,\dist, \haus^N)$ is a non collapsed $\RCD(K,N)$ space one has $\haus^h = \frac{\omega_N}{\omega_{N-1}} \haus^{N-1}$ and the identity \eqref{eq:reprcdim} was already proven in \cite{AmbrosioBrueSemola19,BruePasqualettoSemola19}.

Let us point out a remarkable consequence of \autoref{thm:tv_on_reg_set} about noncollapsing of codimension one hypersurfaces relative to the ambient spaces, even when the ambient manifolds do collapse.

\begin{corollary}\label{cor:noncoll}
Let $K\in\setR$ and $N\ge 2$ be fixed. Let us consider a sequence of pointed smooth $N$-dimensional Riemannian manifolds $(M_i,\dist_i,\haus^N,p_i)$ with Ricci curvature uniformly bounded from below by $K$ and assume that $(M_i,\dist_i,\haus^N/\haus^N(B_1(p_i)),p_i)$ converge in the pmGH topology to a Ricci limit $(X,\dist,\meas,p)$ with essential dimension $1\le n\le N$. Let moreover $\Omega_i\subset M_i$ be open domains with smooth boundary such that 
\begin{equation}\label{eq:volbd}
\frac{1}{4}\le \frac{\haus^N(\Omega_i\cap B_2(p_i))}{\haus^N(B_2(p_i))}\le \frac{3}{4}\, ,\quad\text{for any $i\in\setN$}
\end{equation}
and 
\begin{equation}\label{eq:areabd}
\frac{\haus^{N-1}(\partial\Omega_i\cap B_2(p_i))}{\haus^N(B_2(p_i))}\le C\, ,\quad\text{for any $i\in\setN$, for some $C>0$}\, .
\end{equation}
Then the perimeter measure $\abs{D\chi_{\Omega}}$ of any limit $\Omega\subset X$ of the sequence $\Omega_i$ in the $L^1_{\loc}$ sense is concentrated on the $n$-regular set $\mathcal{R}_n$ of $X$ on $B_1(p)$.
\end{corollary}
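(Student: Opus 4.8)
The plan is to realize $\Omega$ as a set of finite perimeter in the $\RCD(K,N)$ space $X$ and then apply \autoref{cor:repr} to conclude that $|D\chi_\Omega|$ is concentrated on $\mathcal R_n$ on $B_1(p)$. The only thing to be checked is that the limit $\Omega$ actually has (locally) finite perimeter in $X$, at least inside $B_2(p)$, with perimeter controlled by the uniform bounds \eqref{eq:volbd}--\eqref{eq:areabd}; once this is known, \autoref{cor:repr} (applied on the ball $B_2(p)$, or via a localization of \autoref{thm:tv_on_reg_set} to $\chi_\Omega\,\eta$ for a cutoff $\eta$ supported in $B_2(p)$) gives that $|D\chi_\Omega|\res B_1(p)$ is concentrated on $\mathcal R_n$.

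First I would set up the convergence. By hypothesis the rescaled spaces $(M_i,\dist_i,\meas_i,p_i)$, with $\meas_i\coloneqq \haus^N/\haus^N(B_1(p_i))$, converge in the pmGH topology to $(X,\dist,\meas,p)$. Using the lower semicontinuity of perimeter under $L^1_{\loc}$-convergence along pmGH-converging sequences (a standard fact in this setting, see \cite{AmbrosioBrueSemola19}), from \eqref{eq:areabd} and the fact that on a smooth Riemannian manifold $\Per_{\Omega_i}=\haus^{N-1}\res\partial\Omega_i$, one gets
\[
\Per_\Omega(B_2(p))\le\liminf_{i\to\infty}\Per_{\Omega_i}(B_2(p_i))
=\liminf_{i\to\infty}\frac{\haus^{N-1}(\partial\Omega_i\cap B_2(p_i))}{\haus^N(B_2(p_i))}\le C\, .
\]
In particular $\chi_\Omega$ has bounded variation on $B_2(p)$, so $\Omega$ is a set of locally finite perimeter there; the bounds \eqref{eq:volbd} just guarantee that the limit is nontrivial (so that $\Omega$ and $X\setminus\Omega$ both have positive measure on $B_2(p)$, preventing the degenerate situation $\meas(\Omega\cap B_2(p))\in\{0,\meas(B_2(p))\}$), and are otherwise not needed for the conclusion.

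Next I would invoke the structure theory. Since $(X,\dist,\meas)$ is an $\RCD(K,N)$ space (by stability of the $\RCD(K,N)$ condition under pmGH convergence, recalled in Section~\ref{sec:preliminaries}) with essential dimension $n$, \autoref{thm:tv_on_reg_set} applied to any $f=\chi_\Omega\,\eta$ with $\eta\in\Lip_c(B_2(p))$, $0\le\eta\le1$, $\eta\equiv1$ on $B_1(p)$ — or more directly \autoref{cor:repr} in its local form — gives that $|D\chi_\Omega|$ restricted to $B_1(p)$ is concentrated on $\mathcal R_n$, and in fact that $|D\chi_\Omega|\res B_1(p)=\frac{\omega_{n-1}}{\omega_n}\haus^h\res(\mathcal F_n\Omega\cap B_1(p))$. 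This is exactly the assertion.

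The main obstacle is the first step: making precise the lower semicontinuity of perimeter along the pmGH-converging sequence $(M_i,\dist_i,\meas_i,p_i)\to(X,\dist,\meas,p)$ together with the $L^1_{\loc}$-convergence $\chi_{\Omega_i}\to\chi_\Omega$, and in particular ensuring that the notion of $L^1_{\loc}$-convergence of sets (Definition~\ref{def:blowupset}, adapted to a general converging sequence rather than blow-ups) behaves well and that the limit $\Omega$ is indeed a bona fide set of finite perimeter in $X$ and not merely a measurable set. Everything after that is a direct application of the results already established in this section, so no genuinely new difficulty arises there. I would also remark that the hypotheses $N\ge2$ and \eqref{eq:volbd} are there only to exclude trivial or one-dimensional degeneracies; the heart of the statement is the concentration $|D\chi_\Omega|\ll_{\rm set}\mathcal R_n$, which is a clean corollary of \autoref{thm:tv_on_reg_set}.
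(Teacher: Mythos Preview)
Your proposal is correct and follows essentially the same approach as the paper: the paper's proof observes that the $\Omega_i$ admit $L^1_{\loc}$ limit points (with locally finite perimeter) by \eqref{eq:volbd}, \eqref{eq:areabd} and the compactness/lower semicontinuity result \cite[Corollary~3.4]{AmbrosioBrueSemola19}, and then applies \autoref{thm:tv_on_reg_set} directly. Your identification of the lower semicontinuity step as the only nontrivial ingredient, and the subsequent appeal to \autoref{thm:tv_on_reg_set}/\autoref{cor:repr}, matches this exactly.
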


\begin{proof}
Let us observe that the sequence $\Omega_i$ admits limit points with respect to $L^1_{\loc}$ convergence, thanks to \eqref{eq:volbd}, \eqref{eq:areabd} and \cite[Corollary 3.4]{AmbrosioBrueSemola19}. The conclusion then follows from \autoref{thm:tv_on_reg_set}.
\end{proof}

\subsection{Dimension of the tangent module over the boundary of \texorpdfstring{$E$}{E}}

Recall that, by \cite{GigliPasqualetto16}, almost everywhere constancy of the dimension at the level of pointed measured GH tangent spaces on $\RCD(K,N)$ spaces can be turned into constancy of the dimension for the tangent module $L^2(TX)$. 

Here we wish to show that the same phenomenon occurs, for slightly different reasons, also at the level of the restriction of the tangent module to the boundary of any set of finite perimeter, as introduced in \cite{BruePasqualettoSemola19}, see \autoref{thm:tg_mod_over_bdry} for the relevant definition.

\begin{theorem}\label{thm:constdim tanmod}
	Let $(X,\dist,\meas)$ be an $\RCD(K,N)$ metric measure space for some $K\in\setR$ and $1\le N<\infty$. Let $E\subset X$ be a set of finite perimeter and let $1\le n\le N$ be the essential dimension of $(X,\dist,\meas)$. Then the dimension of $L^2_E(TX)$ is constant and equals $n$.
\end{theorem}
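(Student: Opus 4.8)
The plan is to establish the two bounds $\dim L^2_E(TX)\le n$ and $\dim L^2_E(TX)\ge n$ separately, both understood $|D\chi_E|$-a.e., i.e.\ at the level of the dimensional decomposition of the Hilbert $L^0(|D\chi_E|)$-module $L^0_E(TX)=L^0_{\rm Cap}(TX)_{|D\chi_E|}$, whose dimension agrees with that of $L^2_E(TX)$. The decisive new input is \autoref{cor:repr}: it guarantees that $\Per_E$ is concentrated on $\mathcal F_nE\subset\mathcal R_n$ and gives no mass to $\mathcal F_kE$ for $k<n$, which is exactly what prevents the local dimension from dropping below $n$ on a set of positive perimeter. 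Two general facts will be used throughout: $|D\chi_E|$ is asymptotically doubling (\autoref{prop:asymdoub}), hence the Lebesgue differentiation theorem is available for it; and $|D\chi_E|\ll\mathrm{Cap}$, so every quasi-continuous function has a canonical $|D\chi_E|$-a.e.\ representative and the trace $\tr_E\colon H^{1,2}(X)\to L^0(|D\chi_E|)$ is linear and multiplicative on $H^{1,2}(X)\cap L^\infty(X)$.

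\emph{The bound $\dim L^2_E(TX)\le n$.} I claim that the generating set $\{\bar\nabla f:f\in\Test(X)\}$ of $L^2_E(TX)$ (see \autoref{thm:tg_mod_over_bdry}) realizes this bound in the sense of \autoref{def:local_dim}. Fix $f_1,\dots,f_{n+1}\in\Test(X)$. Since $\nabla f_a\cdot\nabla f_b\in H^{1,2}(X)\cap L^\infty(X)$ for all $a,b$, the function $g\coloneqq\det\big(\nabla f_a\cdot\nabla f_b\big)_{a,b=1}^{n+1}$ lies in $H^{1,2}(X)\cap L^\infty(X)$ as well. As $L^0(TX)$ has dimension $n$ $\meas$-a.e.\ by \cite{GigliPasqualetto16} together with \autoref{thm:structure theory}, \autoref{lemma:matrixlin} applied in $L^0(TX)$ forces the Gram matrix $\big(\nabla f_a\cdot\nabla f_b\big)_{a,b}$ to be singular $\meas$-a.e., i.e.\ $g=0$ $\meas$-a.e. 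Hence $g$ is the zero element of $H^{1,2}(X)$, so its quasi-continuous representative vanishes $\mathrm{Cap}$-a.e., and therefore $|D\chi_E|$-a.e.; using that $\tr_E$ is linear and multiplicative,
\[
\det\big(\langle\bar\nabla f_a,\bar\nabla f_b\rangle\big)_{a,b=1}^{n+1}=\tr_E(g)=0\qquad|D\chi_E|\text{-a.e.}
\]
By \autoref{lemma:matrixlin} in $L^0_E(TX)$, the elements $\bar\nabla f_1,\dots,\bar\nabla f_{n+1}$ are linearly dependent on every Borel set of positive perimeter, which proves the bound. (Alternatively, the same argument first gives $\dim L^0_{\rm Cap}(TX)\le n$, and then \autoref{lem:local_dim_drops} with $\mu=\mathrm{Cap}$ transfers the bound to $L^0_E(TX)$.)

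\emph{The bound $\dim L^2_E(TX)\ge n$.} By \autoref{cor:repr}, $|D\chi_E|$ is concentrated on $\mathcal F_nE$. By the structure theory of sets of finite perimeter developed in \cite{BruePasqualettoSemola19,AmbrosioBrueSemola19} (see also \autoref{prop:harmcoord} below), for $|D\chi_E|$-a.e.\ $x\in\mathcal F_nE$ and every $\delta>0$ there exist $r>0$ and a $\delta$-splitting map $u=(u_1,\dots,u_n)\colon B_r(x)\to\setR^n$ that splits well also with respect to the perimeter, namely $\fint_{B_r(x)}\big|\nabla u_a\cdot\nabla u_b-\delta_{ab}\big|\,\di|D\chi_E|\le\delta$ for all $a,b$. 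As each $u_a$ is harmonic, $\nabla u_a\cdot\nabla u_b\in H^{1,2}(B_r(x))$ (cf.\ \autoref{lemma:gradharm}), so $\bar\nabla u_1,\dots,\bar\nabla u_n$ are well-defined in $L^2_E(TX)$ restricted to $B_r(x)$ and $\langle\bar\nabla u_a,\bar\nabla u_b\rangle=\tr_E(\nabla u_a\cdot\nabla u_b)$. By Chebyshev's inequality there is a Borel set $F\subset B_r(x)\cap\mathcal F_nE$ with $|D\chi_E|(F)\ge\big(1-n^2\delta^{1/2}\big)|D\chi_E|(B_r(x))$ on which $\big|\langle\bar\nabla u_a,\bar\nabla u_b\rangle-\delta_{ab}\big|\le\delta^{1/2}$ for all $a,b$; once $\delta$ is small enough the Gram matrix $\big(\langle\bar\nabla u_a,\bar\nabla u_b\rangle\big)_{a,b}$ is invertible $|D\chi_E|$-a.e.\ on $F$, and \autoref{lemma:matrixlin} gives that $\bar\nabla u_1,\dots,\bar\nabla u_n$ are linearly independent on $F$, so $|D\chi_E|\big(F\cap D_k(L^2_E(TX))\big)=0$ for every $k<n$. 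To globalize, set $Z\coloneqq\bigcup_{k<n}D_k(L^2_E(TX))$ and suppose $|D\chi_E|(Z)>0$. Since $|D\chi_E|$ is asymptotically doubling, $|D\chi_E|$-a.e.\ point of $Z$ is a Lebesgue density point of $Z$; picking such a point $x$ that also satisfies the property above and letting $r\downarrow 0$ along the relevant scales, the sets $F$ yield $|D\chi_E|(B_r(x)\cap Z)\le n^2\delta^{1/2}\,|D\chi_E|(B_r(x))$, which for $\delta$ with $n^2\delta^{1/2}<\tfrac12$ contradicts $x$ being a density point of $Z$. Hence $|D\chi_E|(Z)=0$.

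Combining the two bounds, $D_k(L^2_E(TX))$ is $|D\chi_E|$-negligible for every $k\ne n$, including $k=\infty$, which is precisely the assertion that the dimension of $L^2_E(TX)$ is constant and equal to $n$. The main difficulty—and the reason a new argument beyond \cite{BruePasqualettoSemola19} is needed—is that the dimension of the ambient tangent module is controlled only $\meas$-a.e., whereas $|D\chi_E|$ is typically singular with respect to $\meas$: the upper bound circumvents this by upgrading the $\meas$-a.e.\ vanishing of a genuine Sobolev function to a $\mathrm{Cap}$-a.e.\ (hence $|D\chi_E|$-a.e.) vanishing, while the lower bound rests on \autoref{cor:repr} together with the availability of splitting maps adapted to the perimeter measure, plus a routine Lebesgue-differentiation covering argument.
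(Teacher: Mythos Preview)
Your proof is correct, and for the lower bound it follows the paper's approach closely: both you and the paper invoke \autoref{prop:harmcoord} to obtain, at \(|D\chi_E|\)-a.e.\ point, harmonic coordinates whose Gram matrix \(\langle\bar\nabla u_a,\bar\nabla u_b\rangle\) is close to the identity, and then conclude via \autoref{lemma:matrixlin}. Your Lebesgue-density globalization is slightly more explicit than the paper's, which simply observes that the Gram matrix is invertible \(|D\chi_E|\)-a.e.\ on a small ball. One presentational point: your phrasing ``for every \(\delta>0\) there exist \(r>0\) and a \(\delta\)-splitting map'' does not by itself allow you to send \(r\downarrow 0\) with \(\delta\) fixed; what you actually need (and what \autoref{prop:harmcoord} provides) is a \emph{fixed} system of coordinates for which the \(|D\chi_E|\)-averaged error tends to zero as \(r\to 0\). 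With that reading the density argument goes through.

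Your upper bound, however, is genuinely different from the paper's and worth highlighting. The paper (\autoref{prop:indtest}) argues by blow-up: assuming \(n+1\) gradients of test functions were independent on a set of positive capacity, it finds a point where the rescaled functions converge to \(n+1\) independent linear functions on the tangent \(\setR^n\), a contradiction. You instead exploit that \(g\coloneqq\det(\nabla f_a\cdot\nabla f_b)_{a,b=1}^{n+1}\) belongs to \(H^{1,2}\cap L^\infty\), vanishes \(\meas\)-a.e.\ because \(L^0(TX)\) has dimension \(n\), and therefore vanishes \(\mathrm{Cap}\)-a.e.\ via its quasi-continuous representative; since \(|D\chi_E|\ll\mathrm{Cap}\) and \(\tr_E\) is multiplicative on bounded Sobolev functions, the boundary Gram matrix is singular \(|D\chi_E|\)-a.e. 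This is shorter and more algebraic, trading the blow-up machinery for the black box that \(L^0(TX)\) has constant dimension \(n\) (from \cite{GigliPasqualetto16} and \autoref{thm:structure theory}). The paper's route is more self-contained in that it re-derives the needed dimensional rigidity at the capacitary level directly from the tangent-cone structure, whereas yours transfers the already-known \(\meas\)-a.e.\ result across the singular measure \(|D\chi_E|\) through the Sobolev regularity of the Gram determinant.
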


The proof of \autoref{thm:constdim tanmod} is given in two steps. We first show that the dimension of $L^2_E(TX)$ is smaller than $n$.

\begin{proposition}\label{prop:indtest}
	Let $(X,\dist,\meas)$ be an $\RCD(K,N)$ metric measure space with essential dimension $1\le n\le N$. Let $E\subset X$ be a set of locally finite perimeter.
     Then the local dimension of \(L^0_{\rm Cap}(TX)\) on \(\mathcal F_n E\) does not exceed \(n\).
    In particular, the local dimension of \(L^2_E(TX)\) is smaller than or equal to \(n\).
\end{proposition}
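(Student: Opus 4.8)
The plan is to verify the upper local dimension bound of \autoref{def:local_dim}(iii) directly, using as generating family the image $\mathcal S\coloneqq\{\bar\nabla f:f\in\Test(X)\}$ of the capacitary gradient, which generates $L^0_{\rm Cap}(TX)$ by construction. Fix arbitrary $f_1,\dots,f_{n+1}\in\Test(X)$ and consider the symmetric matrix with entries $A_{ij}\coloneqq\langle\bar\nabla f_i,\bar\nabla f_j\rangle\in L^0({\rm Cap})$. By \autoref{lemma:matrixlin}, in order to show that $\bar\nabla f_1,\dots,\bar\nabla f_{n+1}$ are linearly dependent on \emph{every} Borel set $B\subseteq\mathcal F_n E$ with ${\rm Cap}(B)>0$, it is enough to prove
\[
\det\big((A_{ij})_{i,j=1}^{n+1}\big)=0\qquad{\rm Cap}\text{-a.e.\ on }X\,,
\]
since then $(A_{ij})$ fails to be invertible on any such $B$ and \autoref{lemma:matrixlin} yields the required linear dependence; an appeal to \autoref{def:local_dim}(iii) then gives the bound for $L^0_{\rm Cap}(TX)$ on $\mathcal F_n E$.

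To establish the displayed identity I would first prove that $\det\big((\langle\nabla f_i,\nabla f_j\rangle)_{i,j}\big)=0$ holds $\meas$-a.e.\ and then upgrade it to a ${\rm Cap}$-a.e.\ statement. The $\meas$-a.e.\ vanishing is a consequence of the constancy of the dimension of the tangent module: by \cite{BrueSemola20} and \cite{GigliPasqualetto16}, $L^0(TX)$ has local dimension equal to the essential dimension $n$ at $\meas$-a.e.\ point, so expanding the $n+1$ vectors $\nabla f_1,\dots,\nabla f_{n+1}$ in a local basis of cardinality $n$ shows that the Gram matrix $(\langle\nabla f_i,\nabla f_j\rangle)_{i,j}$ has rank at most $n<n+1$, hence vanishing determinant, $\meas$-a.e. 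The upgrade to ${\rm Cap}$ is the crucial step, and it is here that the special structure of test functions enters: since $f_i\in\Test(X)$ we have $|\nabla f_i|\in L^\infty(X)$ and $\langle\nabla f_i,\nabla f_j\rangle\in H^{1,2}(X)$, so each entry belongs to $H^{1,2}(X)\cap L^\infty(X)$, and by the Leibniz rule the polynomial $\det\big((\langle\nabla f_i,\nabla f_j\rangle)_{i,j}\big)$ again lies in $H^{1,2}(X)\cap L^\infty(X)$. Applying \autoref{thm:caprepr} to this Sobolev function, its quasi-continuous representative coincides ${\rm Cap}$-a.e.\ with the limit of its integral averages over shrinking balls, all of which are $0$ because the function vanishes $\meas$-a.e.; thus this representative is $0$ ${\rm Cap}$-a.e. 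Finally, by the construction of $L^0_{\rm Cap}(TX)$ together with polarization, each $A_{ij}$ is the ${\rm Cap}$-class of a quasi-continuous representative of $\langle\nabla f_i,\nabla f_j\rangle$, so $\det(A_{ij})$ is represented by the determinant of these representatives, which is itself a quasi-continuous representative of the Sobolev function above; by the ${\rm Cap}$-a.e.\ uniqueness of quasi-continuous representatives (again a consequence of \autoref{thm:caprepr}) we conclude $\det(A_{ij})=0$ ${\rm Cap}$-a.e., as wanted. This proves the first assertion.

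For the ``in particular'' part, recall that $L^0_E(TX)=\big(L^0_{\rm Cap}(TX)\big)_{|D\chi_E|}$ and $|D\chi_E|\ll{\rm Cap}$. By \autoref{lem:local_dim_drops}, the local dimension of $L^0_E(TX)$ on $\mathcal F_n E$ does not exceed $n$; since $|D\chi_E|$ is concentrated on $\mathcal F_n E$ by \autoref{cor:repr}, the same bound holds on all of $X$, whence the local dimension of $L^2_E(TX)$ is at most $n$.

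The main obstacle is precisely the passage from the $\meas$-a.e.\ rank estimate to its ${\rm Cap}$-a.e.\ counterpart: the device that makes it work is that scalar products of gradients of test functions are \emph{bounded Sobolev} functions, so that their determinant falls within the scope of the capacitary Lebesgue point theorem \autoref{thm:caprepr}. Everything else is routine bookkeeping with \autoref{lemma:matrixlin} and \autoref{lem:local_dim_drops}.
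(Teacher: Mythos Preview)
Your proof is correct and takes a genuinely different route from the paper's. The paper argues by contradiction via a \emph{blow-up}: assuming $\bar\nabla f_1,\dots,\bar\nabla f_{n+1}$ are linearly independent on some $B\subset\mathcal F_n E$ with ${\rm Cap}(B)>0$, it selects a point $x\in B$ where the Gram matrix is invertible and the $f_i$ satisfy suitable Lebesgue-point conditions, then rescales the $f_i$ around $x$ to obtain, in the limit, $n+1$ linearly independent linear (harmonic, splitting) functions on the tangent $\setR^n$, which is absurd. Your argument instead bypasses the blow-up entirely: you import the fact that $L^0(TX)$ has constant local dimension $n$ (from \cite{BrueSemola20} together with \cite{GigliPasqualetto16}) to get $\det(\langle\nabla f_i,\nabla f_j\rangle)=0$ $\meas$-a.e., and then exploit the Sobolev regularity $\langle\nabla f_i,\nabla f_j\rangle\in H^{1,2}\cap L^\infty$ to propagate this to a ${\rm Cap}$-a.e.\ identity via \autoref{thm:caprepr} and the uniqueness of quasi-continuous representatives.

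What the two approaches buy: the paper's blow-up is more self-contained (it does not invoke the identification of the tangent-module dimension from \cite{GigliPasqualetto16}) and uses only the pointwise structure of $\mathcal F_n E$, at the cost of heavier analytic machinery (convergence of rescaled harmonic functions, etc.). Your argument is shorter and purely algebraic/functional-analytic, and in fact yields the stronger conclusion that $\det(A_{ij})=0$ holds ${\rm Cap}$-a.e.\ on \emph{all} of $X$ --- precisely the strengthening the paper records separately in the remark following its proof (there obtained via \cite{Kitabeppu19}). The appeal to \autoref{cor:repr} in your ``in particular'' part is legitimate (no circularity) but superfluous: since you have already established the bound ${\rm Cap}$-a.e.\ on $X$, \autoref{lem:local_dim_drops} applied globally gives the bound for $L^0_E(TX)$ on $X$ directly.
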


To prove that the dimension of $L^2_E(TX)$ is bigger than or equal to $n$ we employ the first conclusion of the following proposition and the fact that $|D\chi_E|(\mathcal{F}_k E)=0$ for $k<n$ as a consequence of \autoref{cor:repr}.
Indeed, given $x\in X$, $r_x>0$ and $(u_i):B_{r_x}(x)\to\setR^n$ as in \eqref{eq:ortomper}, it is simple to check that
\[
	A_{i,j} := \nabla u_i \cdot \nabla u_j
\]
is invertible $|D\chi_E|$-a.e.\ in $B_r(x)$ for $r<r_x$ small enough. Hence we can apply \autoref{lemma:matrixlin} and conclude that the dimension of $L^2_E(TX)$ is bigger than $n$.

\begin{proposition}\label{prop:harmcoord}
	Let $(X,\dist,\meas)$ be an $\RCD(K,N)$ metric measure space for some $K\in\setR$ and $1\le N<\infty$ and let $1\le n\le N$ denote its essential dimension. Let $E\subset X$ be a set of finite perimeter. Then, for $\abs{D\chi_E}$-a.e.\ $x\in X$ there exist $r_x>0$ and harmonic functions $(u_i):B_{r_x}(x)\to\setR^n$ such that, for any $i,j\in\{1,\dots, n\}$, 
	\begin{equation}\label{eq:ortomper}
		\lim_{r\to 0}\fint_{B_r(x)}\abs{\nabla u_i\cdot\nabla u_j-\delta_{ij}}\di\meas=\lim_{r\to 0}\fint_{B_r(x)}\abs{\nabla u_i\cdot\nabla u_j-\delta_{ij}}\di\abs{D\chi_E}=0\, .
	\end{equation}
	Moreover, for any $i\in\{1,\dots, n\}$ the following limits exist:
	\begin{equation}\label{eq:exsisttracenu}
		\nu_i:=\lim_{r\to 0}\fint_{B_r(x)}\nu_E\cdot\nabla u_i\di\abs{D\chi_E},
	\end{equation}
	\begin{equation}\label{eq:reinflebper}
		\lim_{r\to 0}\fint_{B_r(x)}\abs{\nu_i-\nu_E\cdot\nabla u_i}\di\abs{D\chi_E}=0
	\end{equation}
	and, setting $\nu:=(\nu_1,\dots,\nu_n)$, it holds $\abs{\nu}_{\setR^n}=1$.
\end{proposition}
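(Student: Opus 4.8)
The plan is to produce, at $|D\chi_E|$-a.e.\ $x$, the map $u$ by the same blow-up/splitting-map construction used in \cite{AmbrosioBrueSemola19,BruePasqualettoSemola19}, which by \autoref{cor:repr} is now always $n$-dimensional, and then to read off the various limits from Lebesgue-differentiation statements together with the structure of the blow-up. First I would fix a Borel set $\mathcal G$ of full $|D\chi_E|$-measure on which one can work: using \autoref{cor:repr} ($\Per_E$ is concentrated on $\mathcal R_n\cap\mathcal F_nE$), the asymptotic doubling of $\Per_E$ (\autoref{prop:asymdoub}) and hence the validity of Lebesgue differentiation with respect to $\Per_E$, the fact $\Per_E\ll\mathrm{Cap}$, and the existence of $\nu_E\in L^2_E(TX)$ with $|\nu_E|=1$ $\Per_E$-a.e.\ (\autoref{thm:GGRCDsmooth}), I would let $\mathcal G$ consist of the points $x$ at which: the blow-up of $(X,\dist,\meas,E,\nu_E)$ along every infinitesimal sequence is $(\setR^n,\dist_{eucl},c_n\Leb^n,0^n,\{x_n>0\},\nu_x)$ with $|\nu_x|_{\setR^n}=1$ and the rescaled perimeter measures converge to a multiple of $\haus^h\res\{x_n=0\}$ (this is \autoref{thm:DGRCD} and the convergence machinery of \cite{AmbrosioBrueSemola19}), and moreover $x$ is a $\Per_E$-Lebesgue point of the (finitely many) bounded functions appearing below.

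Next, for a fixed $x\in\mathcal G$ and a small $\delta>0$, I would construct $u$ by lifting the Euclidean coordinate functions through a pmGH $\delta$-approximation of a rescaled ball $(X,r^{-1}\dist,\meas_r^x,x)$ by its tangent $\setR^n$: for a suitable $r_x>0$ this produces a harmonic $\delta$-splitting map $u=(u_1,\dots,u_n)\colon B_{r_x}(x)\to\setR^n$ in the sense of \autoref{def:splitting maps} which, since the rescaled $\Per_E$ also converge, is a $\delta$-splitting map with respect to $\Per_E$ as well, i.e.\ $\fint_{B_{r_x}(x)}|\nabla u_i\cdot\nabla u_j-\delta_{ij}|\di|D\chi_E|\le\delta$. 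Letting $c_{ij}$ be the capacitary precise value of $\nabla u_i\cdot\nabla u_j$ at $x$ — which, thanks to $\meas\ll\mathrm{Cap}$, $|D\chi_E|\ll\mathrm{Cap}$ and the fine-potential-theoretic compatibility underlying \autoref{thm:caprepr}, coincides with both its $\meas$-Lebesgue and its $|D\chi_E|$-Lebesgue value and is a symmetric positive semidefinite matrix $O(\delta)$-close to the identity, hence invertible — I would replace $u$ by $c^{-1/2}u$. This is still harmonic and still an $O(\delta)$-splitting map for $\meas$ and for $|D\chi_E|$, now with $c_{ij}=\delta_{ij}$, and it blows up along the scales fixed at $x$ to an isometric splitting of $\setR^n$.

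With such a $u$ at hand the limits are essentially automatic. For \eqref{eq:ortomper}: by Bochner's inequality each $|\nabla(u_i\pm u_j)|^2$ has Laplacian bounded below, so by the Remark following \autoref{lemma:gradharm} every point — in particular $x$ — is a $\meas$-Lebesgue point of $\nabla u_i\cdot\nabla u_j$, whose value there is $\delta_{ij}$; this gives the first limit. For the $|D\chi_E|$-average, the $\Per_E$-Lebesgue value $\ell_{ij}$ of $\nabla u_i\cdot\nabla u_j$ at $x$ exists by \autoref{prop:asymdoub}, and comparing along the scales fixed at $x$ (where $\fint_{B_{r_k}(x)}|\nabla u_i\cdot\nabla u_j-\delta_{ij}|\di|D\chi_E|$ converges to the corresponding integral against the half-space perimeter, which vanishes since $u$ blows up to an isometric splitting) forces $\ell_{ij}=\delta_{ij}$, yielding the second limit. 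Since $\nu_E\cdot\nabla u_i\in L^\infty(|D\chi_E|)$, \autoref{prop:asymdoub} also gives the existence of $\nu_i$ in \eqref{eq:exsisttracenu} and \eqref{eq:reinflebper}. Finally, for $|\nu|_{\setR^n}=1$ I would argue through the blow-up rather than through module algebra (so as not to use the as-yet-unproven full-dimensionality of $L^2_E(TX)$): the vector fields $\bar\nabla u_i$ blow up to the orthonormal frame $e_i'$ of the tangent $\setR^n$ and $\nu_E$ blows up to the constant unit vector $\nu_x$, so the convergence of the rescaled perimeters together with the $L^2$-convergence of vector fields along the pmGH convergence gives $\fint_{B_{r_k}(x)}\nu_E\cdot\nabla u_i\di|D\chi_E|\to\nu_x\cdot e_i'$; combined with \eqref{eq:exsisttracenu} this identifies $\nu=(\nu_x\cdot e_1',\dots,\nu_x\cdot e_n')$, whence $|\nu|_{\setR^n}^2=|\nu_x|_{\setR^n}^2=1$.

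I expect the main obstacle to lie exactly in making the blow-up argument of the second and third steps rigorous: one must transport the pmGH blow-up simultaneously through the metric-measure structure, the set $E$ and its perimeter measure, the tangent module and the vector field $\nu_E$ (the relevant $L^2$-convergence of modules and vector fields is the one developed in \cite{AmbrosioBrueSemola19,BruePasqualettoSemola19}), and one must reconcile this with the capacitary fine structure and with $|D\chi_E|$-Lebesgue differentiation so that all the ``$\lim_{r\to0}\fint_{B_r(x)}$'' statements hold for the single map $u$ at the single point $x$ — in particular, that the $\meas$-Lebesgue, the $|D\chi_E|$-Lebesgue and the capacitary precise values of $\nabla u_i\cdot\nabla u_j$ at $x$ all agree. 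The purely local differentiation steps and the linear renormalization by $c^{-1/2}$ are routine; the module-valued convergence is what carries the proof.
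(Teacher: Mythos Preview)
Your construction of the splitting map, the linear normalization, and the Lebesgue-point arguments for \eqref{eq:ortomper}--\eqref{eq:reinflebper} are essentially what the paper does. The paper, however, reverses your order of quantifiers: it first invokes from \cite{BruePasqualettoSemola19} a \emph{countable} family of harmonic $\eps$-splitting maps $(u^j)$ covering $\mathcal F_nE$, and only then throws away $|D\chi_E|$-null sets to ensure each relevant point is a Lebesgue point (for $\meas$ and for $|D\chi_E|$) of the functions $\nabla u^j_a\cdot\nabla u^j_b$ and $\nu_E\cdot\nabla u^j_a$ attached to \emph{that fixed} map. Your scheme ``fix $x$, then build $u$'' runs into exactly the obstacle you flag: the capacitary precise value, the $\meas$-Lebesgue value, and the $|D\chi_E|$-Lebesgue value of $\nabla u_i\cdot\nabla u_j$ each exist only outside an exceptional set, and that set depends on $u$; there is no mechanism guaranteeing your particular $x$ avoids it for the map you just constructed. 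The fix is precisely the paper's ordering.

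The genuine divergence is in the proof of $|\nu|_{\setR^n}=1$. The paper argues by module algebra: \autoref{prop:indtest} gives $\dim L^2_E(TX)\le n$, quantitative linear independence of the $\nabla u_i$ then forces them to be a basis of $L^2_E(TX)$ on $A''$, one writes $\nu_E=\sum a_i\nabla u_i$, uses $|\nu_E|=1$ to get $\sum a_ia_j\nabla u_i\cdot\nabla u_j=1$, and evaluates at $x$ after orthonormalizing. Your worry about circularity is unfounded here --- only the \emph{upper} bound $\dim\le n$ is needed, and \autoref{prop:indtest} is proved independently. Your alternative via blow-up is viable, but the phrase ``$\nu_E$ blows up to the constant unit vector $\nu_x$'' is not directly meaningful: $\nu_E$ lives in $L^2_E(TX)$, and there is no off-the-shelf convergence of this object along pmGH blow-ups. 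What does work is to pass the Gauss--Green identity $\int_{E_m}\div(\phi_m\nabla u_i^m)\di\meas_m=-\int\phi_m(\nabla u_i^m\cdot\nu_{E_m})\di|D\chi_{E_m}|$ to the limit: the left side converges to $\int_H\partial_i\phi=-\,(e_i\cdot\nu_H)\int_{\partial H}\phi$, while on the right the Lebesgue property of $\nu_E\cdot\nabla u_i$ at $x$ lets you replace it by the constant $\nu_i$, giving $\nu_i=e_i\cdot\nu_H$ and hence $\sum\nu_i^2=|\nu_H|^2=1$. This is essentially the computation the paper carries out later in \autoref{prop:blowupsandharmcoord}; it can be done here without circularity, but it is not the ``$L^2$-convergence of $\nu_E$'' you invoke.
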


The last two conclusions of \autoref{prop:harmcoord} will play a role in \autoref{sec:cutpaste}. To understand its meaning let us recall that for sets of finite perimeter $E\subset \setR^n$ a point in $\partial ^*E$ is a reduced boundary point if the limit
\begin{equation}\label{eq:differen}
	\nu_E(x):=\lim_{r\to 0}\frac{D\chi_E(B_r(x))}{\abs{D\chi_E}(B_r(x))}
\end{equation}
exists and $\abs{\nu_E(x)}=1$, where we recall that $D\chi_E$ is the distributional derivative of $\chi_E$, which is a Radon measure under the assumption that $E$ has finite perimeter.\\ 
In \cite{AmbrosioBrueSemola19,BruePasqualettoSemola19} there was the necessity to argue differently, due to the absence of a Besicovitch differentiation theorem and of a notion of distributional derivative. The properties \eqref{eq:exsisttracenu} and \eqref{eq:reinflebper} are the natural replacement of \eqref{eq:differen} in this framework. 

\medskip

We conclude this section by proving \autoref{prop:indtest} and \autoref{prop:harmcoord}.

\begin{proof}[Proof of \autoref{prop:indtest}]
	The proof is based on a blow-up argument. Assuming linear independence of the vector fields $\nabla f_1,\dots,\nabla f_{n+1}$ we find a point $x\in \mathcal{F}_n E$ where the blow-ups of the functions $f_1,\dots,f_{n+1}$ are linearly independent linear harmonic functions. This will contradict the fact that the blow-up of $(X,\dist,\meas)$ at $x$ is $(\setR^n, \dist_{eucl}, \Leb^n, 0^n)$.
	\medskip
	
   We argue by contradiction.
   Suppose there exist a Borel set $B\subset\mathcal F_n E$ with ${\rm Cap}(B)>0$ and functions \(f_1,\ldots,f_{n+1}\in{\rm Test}(X)\) such that
   $\nabla f_1,\dots,\nabla f_{n+1}$ are linearly independent on $B$. Then by \autoref{lemma:matrixlin} the $(n+1)\times (n+1)$ symmetric matrix 
	\[
		A_{ij}(y)\coloneqq\nabla f_i(y)\cdot\nabla f_j(y)
	\]
	is invertible for $\rm Cap$-a.e.\ $y\in B$.
	\\
	Since $\Per_E\ll \mathrm{Cap}$,
	and since for any test functions $f,g\in\Test(X)$ it holds that $\abs{\nabla f}^2$ and $\nabla f\cdot\nabla g$ are in $H^{1,2}$, thanks to \autoref{thm:caprepr} there exists $x\in B$ such that the following properties hold:
	\begin{itemize}
		\item[i)] the limits 
		\[\begin{split}
			\abs{\nabla f_i}^2(x)&:=\lim_{r\to 0}\fint_{B_r(x)} \abs{\nabla f_i}^2(y)\di\meas(y)\,,\\
			\nabla f_i(x)\cdot\nabla f_j(x)&:=\lim_{r\to 0}\fint_{B_r(x)}\nabla f_i(y)\cdot\nabla f_j(y)\di\meas(y)
		\end{split}\]
		exist for any $i,j\in \{1,\dots, n + 1\}$;
		\item[ii)] it holds
		\[
			\lim_{r\to 0}\fint_{B_r(x)} \abs{\abs{\nabla f_i}(y)-\abs{\nabla f_i}(x)}^2\di\meas(y)=0\, 
		\]
		and 
		\[
			\lim_{r\to 0}\fint_{B_r(x)} \abs{\abs{\nabla f_i\cdot \nabla f_j}(y)-\abs{\nabla f_i\cdot\nabla f_j}(x)}^2\di\meas(y)=0\, ,
		\]
		for any $i,j\in \{1,\dots, n+1\}$;
		\item[iii)] the matrix $A_{ij}(x)=\nabla f_i(x)\cdot\nabla f_j(x)$ is invertible;
		\item[iv)] 
		\[
			\lim_{r\to 0}r^2\fint_{B_r(x)}\left(\Delta f_i(y)\right)^2\di\meas(y)=0\, ,
		\]
		for any $i=1,\dots, n + 1$.
	\end{itemize}
	
	A by now classical argument implies then that, up to extraction of a subsequence that we do not relabel, for any sequence $r_m\to 0$, the functions
	\[
		f^m_i(y):=\left(f_i(y)-f_i(x)\right)/r_m
	\]
	considered on the scaled pointed m.m.s.\ $(X,\dist/r_m,\meas_{m},x)$ converge locally uniformly and in $H^{1,2}_{\loc}$ to Lipschitz harmonic functions $g_i: \setR^n \to\setR$, where $(\setR^n, \dist_{eucl}, \Leb^n)$ is the tangent cone of $(X,\dist,\meas)$ at $x$.  
	Moreover, the functions $g_i$ have constant slopes and 
	\[
		\nabla g_i\cdot \nabla g_j=A_{ij}(x)\, ,\quad\text{$\meas_Z$-a.e.\ on $Z$}\, .
	\]
	It is now easy to check that this is in contradiction with the fact that the dimension of $(\setR^n, \dist_{eucl}, \Leb^n)$ is $n$, see \cite{AntonelliBrueSemola19}. Indeed the functions $g_i$ are linearly independent splitting functions.
	This proves the first part of the statement, while the last part follows from the first one by recalling that
	\(L^0_E(TX)=L^0_{\rm Cap}(TX)_{|D\chi_E|}\), and by taking \autoref{lem:local_dim_drops} and \autoref{lem:equiv_dim} into account.
\end{proof}

\begin{remark}
By arguing as we did in the proof of \autoref{prop:indtest} and using the results in \cite{Kitabeppu19},
one can prove that the local dimension of \(L^0_{\rm Cap}(TX)\) on the whole \(X\) does not exceed \(n\).
This upper dimensional bound cannot be improved to an equality: considering for instance the unit interval \(X=[0,1]\), we see that the capacitary
tangent module vanishes on the boundary \(\{0,1\}\), but the latter has positive capacity.
\end{remark}

\begin{proof}[Proof of \autoref{prop:harmcoord}]
	We start by recalling that in \cite{BruePasqualettoSemola19} (see also \cite[Chapter 5]{Semola20} for a different formulation of the result) the following statement has been proved. For any set of finite perimeter $E\subset X$ and for any $\eps>0$ there exist countably many $k$-tuples of harmonic functions $(u^j_i)\, :\, B_{r_j}(x_j)\to\setR^{k_j}$, where $i\in \{1,\dots, k_j\}$, $j\in \setN$ and $k_j\le n$ such that, for $\abs{D\chi_E}$-a.e.\ $x\in \mathcal{F}_k E$, for some $j\in\setN$ such that $k_j = k$ it holds that 
	\begin{equation}\label{eq:almostorto}
		\lim_{r\to 0}\fint_{B_r(x)}\abs{\nabla u^j_a\cdot\nabla u ^j_b-\delta_{ab}}\di\meas\le \eps\, .
	\end{equation}
	Since $|D\chi_E|(\mathcal{F}_k)=0$ for any $k\neq n$, as a consequence of \autoref{cor:repr}, we just consider the case $k=n$. Let $A:=A_j$ be the set of those $x\in \mathcal{F}_n E$ such that the above condition holds for $(u_i):=(u^j_i):B_{r_j}(x_j)\to\setR^n$. It is sufficient to prove that the statement holds for $\abs{D\chi_E}$-a.e.\ $x\in A$. The conclusion will follow since the $A_j$'s cover $\mathcal{F}_n E$ up to an $\abs{D\chi_E}$-negligible set.
	
	\medskip
	
	Thanks to \autoref{lemma:gradharm}, we can also restrict $A$ to $A'$ making the further requirement that
	\begin{equation}\label{eq:lebm}
		\lim_{r\to 0}\fint_{B_r(x)}\abs{\nabla u_i(y)\cdot\nabla u_j(y)-\nabla u_i(x)\cdot\nabla u_j(x)}\di\meas(y)=0\, ,
	\end{equation} 
	for any $x\in A'$.\\
	By a Lebesgue point argument w.r.t.\ the asymptotically doubling measure $\abs{D\chi_E}$ (see \autoref{prop:asymdoub} and the discussion below), we can also assume that 
	\begin{equation}\label{eq:lebper}
		\lim_{r\to 0}\fint_{B_r(x)}\abs{\nabla u_i(y)\cdot\nabla u_j(y)-\nabla u_i(x)\cdot\nabla u_j(x)}\di\abs{D\chi_E}(y)=0\,, 
	\end{equation}
	for any $x\in A'$.\\
	Above, $\nabla u_i\cdot\nabla u_j$ is pointwise defined by \autoref{lemma:gradharm}. 
	
	The combination of \eqref{eq:almostorto} and \eqref{eq:lebm} yields that
	\begin{equation}\label{eq:almortpoint}
		\abs{\nabla u_i(x)\cdot\nabla u_j(x)-\delta_{ij}}\le \eps\, \quad\text{for any $x\in A'$} \, .
	\end{equation}
	With an additional Lebesgue point argument (again w.r.t.\ the asymptotically doubling measure $\abs{D\chi_E}$) we can restrict to a set $A''\subset A'$ considering only those points for which, in addition to the properties above, the limits in \eqref{eq:exsisttracenu} exist and \eqref{eq:reinflebper} holds.
	\medskip
	
	Observe that, for any $x\in A''$ there exists an invertible $n\times n$ matrix $A_x$ such that, setting $u':=A_xu:B_r(x)\to\setR^n$, $u'$ verifies \eqref{eq:ortomper}.\\
	Moreover, the limit in \eqref{eq:exsisttracenu} still exists for $u'$ and \eqref{eq:reinflebper} holds. 
	
	We are left to verify that, if $\nu_x\in\setR^n$ is defined according to this procedure, then $\abs{\nu_x}_{\setR^n}=1$ for $\abs{D\chi_E}$-a.e.\ $x\in A''$.
	\medskip

	We claim that $\{\nabla u_i\, :\, i=1,\dots,n\}$ form a basis of $L^2_E(TX)$ on $A''$. This property follows from two observations. The first one is that the dimension of $L^2_E(TX)$ is not greater than $n$, by \autoref{prop:indtest}. 
	The second one is that they are linearly independent, in the quantitative sense of \eqref{eq:almortpoint}.\\
	We claim that it is possible to find functions $a_1\dots,a_n\in L^{\infty}(\abs{D\chi_E})$ such that 
	\begin{equation}\label{eq:writeinbasis}
		\nu_E=\sum_{i=1}^n a_i\nabla u_i\, ,\quad \text{$\abs{D\chi_E}$-a.e.\ on $A''$}\, .
	\end{equation} 
	The existence of the coefficients follows in turn from the fact that $\{\nabla u_i\, : \, i=1, \ldots, n\}$ forms a basis, while the boundedness follows from the quantitative linear independence \eqref{eq:almortpoint}. See also the proof of \cite[Proposition 1.4.5]{Gigli18} for similar constructions.
	
	Let us observe that, since $\abs{\nu_E}_{\mathbb{R}^n}=1$ holds $\abs{D\chi_E}$-a.e.\ by \autoref{thm:GGRCDsmooth},
	\[
		\abs{\sum_{i=1}^n a_i\nabla u_i}^2=\sum_{1\le i, j\le n}a_ia_j\nabla u_i\cdot\nabla u_j=1\, , \quad\text{$\abs{D\chi_E}$-a.e.\ on $A''$}\, .
	\]
	Moreover, at $\abs{D\chi_E}$-a.e.\ point in $A''$, the coefficients $a_i$ as in \eqref{eq:writeinbasis} are uniquely determined by the values of
	\[
		\nu_E\cdot \nabla u_i\, \quad\text{and }\,\,  \nabla u_i\cdot\nabla u_j\, ,
	\]
	for any $1\le i,j\le n$.
	
	\medskip
	
	If $A_x$ as above denotes a matrix which transforms the $\nabla u_i(x)$ into an orthonormal basis $\nabla u_i'(x)$ at $x$, i.e.\ such that 
	\[
		\nabla u_i'(x)\cdot \nabla u'_j(x)=\delta_{ij}\, ,
	\]
	then, denoting by $a_i'$ the coefficients such that
	\[
		\nu_E(x)=\sum_{i=1}^n a_i'\nabla u_i'(x)\, ,
	\]
	the following hold:
	\[
		a_i'=\nu_E\cdot\nabla u_i'=\lim_{r\to 0}\fint_{B_r(x)}\nu_E\cdot\nabla u_i'\di\abs{D\chi_E}\, 
	\]
	and 
	\[
		\sum_{i=1}^{n}(a_i')^2=1\, ,
	\]
	concluding the proof of the proposition.
\end{proof}

\section{Cut and paste of sets with finite perimeter}
\label{sec:cutpaste}

Given two sets of finite perimeter $E,F\subset X$ it is simple to check that $E\cap F$ and $E\setminus F$ are of finite perimeter as well. In several applications it is relevant to characterize the perimeter measure and the interior normal of $E\cap F$ and $E\setminus F$ in terms of those of $E$ and $F$. The main achievement of this section is to extend classical results in this direction (see \cite[Theorem 16.3]{Maggi12}) to the setting of $\RCD$ spaces.
Recently, with the growing interest towards Geometric Measure Theory on metric measure spaces, and in particular on $\RCD$ spaces, these tools have become relevant also in this framework, see \cite{AntonelliPasqualettoPozzetta21,MondinoSemola21}.

Before stating our main result \autoref{thm:cutandpaste} we need to prove a Federer-type characterization for sets of finite perimeter on $\RCD$ spaces (see  \autoref{prop:federertype}) and a finer characterization of blow-ups at boundary points where {\it good coordinates} exist (see \autoref{prop:blowupsandharmcoord}).

\subsection{Federer-type characterization of sets with finite perimeter}

Let us recall a mild regularity result for sets of finite perimeter which follows again from \cite{BruePasqualettoSemola19}. It can be considered as a counterpart tailored for this framework of the Euclidean Federer-type characterization of sets of finite perimeter, see \cite[Theorem 16.2]{Maggi12}.

\begin{definition}\label{def:denstheta}
In order to ease the notation, given a set of finite perimeter $E\subset X$ and $x\in X$ we shall denote by
\[
\theta(E,x):=\lim_{r\to 0}\frac{\meas(E\cap B_r(x))}{\meas(B_r(x))}\, ,
\]
whenever the limit exists.
\end{definition}

\begin{proposition}\label{prop:federertype}
Let $(X,\dist,\meas)$ be an $\RCD(K,N)$ metric measure space for some $K\in\setR$ and $1\le N<\infty$ and let $E\subset X$ be a set of finite perimeter with finite measure. Then the following hold:
\begin{itemize}
\item[i)] For $\haus^{h}$-a.e.\ $x\in X$ it holds
\[
\theta(E,x)\in \left\lbrace 0,\frac{1}{2},1 \right\rbrace \, .
\]
Moreover, up to an $\haus^{h}$-negligible set it holds
\[
\mathcal{F}E=\left\lbrace x\in E\, :\, \theta(E,x)=\frac{1}{2}\right\rbrace\, .
\]
\item[ii)] For $\haus^{h}$-a.e.\ $x\in X$ it holds
\[
\lim_{t\downarrow 0}P_t\chi_E(x)\in \left\lbrace 0,\frac{1}{2},1\right\rbrace\, .
\]
Moreover, up to a $\haus^{h}$-negligible set it holds
\[
\mathcal{F}E=\left\lbrace x\in E\, :\, \lim_{t\downarrow 0}P_t\chi_E(x)=\frac{1}{2}\right\rbrace\, .
\]
\end{itemize}

\end{proposition}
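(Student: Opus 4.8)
The plan is to derive both statements from the structure theory already established in \cite{BruePasqualettoSemola19} together with the constancy of the dimension along boundaries proved above in \autoref{cor:repr}. The key point is that, after \autoref{cor:repr}, we know that $\abs{D\chi_E}$ is concentrated on $\mathcal{F}_nE$, where $n$ is the essential dimension; hence all blow-ups of $E$ at $|D\chi_E|$-a.e.\ point are half-spaces in $\setR^n$ with the canonical metric measure structure. This is precisely the situation in which the Euclidean-style density computations go through.

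First I would address part (i). The inclusion $\mathcal{F}E\subset\{\theta(E,\cdot)=\tfrac12\}$ up to $\haus^h$-negligible sets is the easy direction: at a point $x\in\mathcal{F}_nE$ the blow-up of $E$ is the half-space $\{x_n>0\}\subset\setR^n$, and the volume density of a half-space at the origin is exactly $\tfrac12$; since the rescaled measures converge in pmGH and $\chi_E\to\chi_{\{x_n>0\}}$ in $L^1_{\loc}$, one passes to the limit in $\meas(E\cap B_r(x))/\meas(B_r(x))$ to get $\theta(E,x)=\tfrac12$. For the reverse inclusion and the trichotomy of densities, I would combine three facts: (a) by the coarea-type decomposition and the Lebesgue-type differentiation theorem for the asymptotically doubling measure $\abs{D\chi_E}$ (recalled after \autoref{prop:asymdoub}), the set $\mathcal{F}E=\bigcup_k\mathcal{F}_kE$ carries all of $\abs{D\chi_E}$; (b) on the complement $X\setminus\mathcal{F}E$ one shows that $\theta(E,x)\in\{0,1\}$ for $\haus^h$-a.e.\ $x$ — this uses that $\haus^h\res(X\setminus\mathcal{F}E)$-a.e.\ point is either a density-$1$ point of $E$ or a density-$1$ point of $X\setminus E$, which follows from the relative isoperimetric inequality on PI spaces applied at small scales together with the fact that the perimeter is concentrated on $\mathcal{F}E$; (c) the decomposition $X=E^{(1)}\cup E^{(0)}\cup\mathcal{F}E$ up to $\haus^h$-null sets. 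The sharpest way to organize this is to quote the $\haus^h$-rectifiability of $\mathcal{F}E$ and the representation \eqref{eq:reprcdim}, which already encode that $\mathcal{F}E$ has finite $\haus^h$-measure and that $\haus^h\res(\mathcal{F}E)$-a.e.\ blow-up is a half-space.

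For part (ii), I would proceed analogously, replacing the metric-ball averages by the heat-semigroup averages $P_t\chi_E$. The main tool is the convergence of the heat flow under pmGH convergence (the Mosco-type convergence of Cheeger energies, cf.\ \cite{GigliMondinoSavare15,AmbrosioHonda17}): if $(X,r_i^{-1}\dist,\meas_x^{r_i},x)\to(Y,\varrho,\mu,y)$ and $\chi_E\to\chi_F$ in $L^1_{\loc}$, then $P_{t}^{r_i}\chi_E$ evaluated at $x$ (with $t$ rescaled appropriately, i.e.\ $P_{r_i^2 t}^X\chi_E(x)$) converges to $P_t^Y\chi_F(y)$. Evaluating at a blow-up point $x\in\mathcal{F}_nE$, the limit is $P_t^{\setR^n}\chi_{\{x_n>0\}}(0)=\tfrac12$ for every $t$ by symmetry of the Euclidean heat kernel under $x_n\mapsto -x_n$; letting $t\downarrow 0$ after the blow-up, and using a diagonal argument to interchange the two limits, gives $\lim_{t\downarrow0}P_t\chi_E(x)=\tfrac12$. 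At density-$0$ (resp.\ density-$1$) points the blow-up is the empty set (resp.\ the whole space), so the heat-flow value converges to $0$ (resp.\ $1$); combined with part (i) this gives the trichotomy and the characterization of $\mathcal{F}E$ in terms of $\lim_{t\downarrow0}P_t\chi_E$.

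The main obstacle I anticipate is the rigorous justification of the interchange of the blow-up limit $r\to0$ with the $t\downarrow0$ limit in part (ii): one needs a quantitative rate, uniform in the scale, for the convergence $P_t\chi_E(x)\to\tfrac12$. This is where the finite-dimensionality $N<\infty$ really enters — Gaussian heat-kernel bounds and the doubling property on $\RCD(K,N)$ spaces give the needed equicontinuity, allowing the diagonal extraction. A secondary technical point is checking that the "bad" set where $\theta(E,\cdot)$ fails to be in $\{0,\tfrac12,1\}$ is $\haus^h$-null and not merely $\meas$-null; this is handled by the rectifiability of $\mathcal{F}E$ together with the fact that on an $n$-rectifiable piece the codimension-one measure $\haus^h$ is mutually absolutely continuous with $\abs{D\chi_E}$, so one is reduced to an $\abs{D\chi_E}$-a.e.\ statement, which is exactly what the blow-up analysis delivers.
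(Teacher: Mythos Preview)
Your proposal is correct and follows essentially the same route as the paper: blow-up via De Giorgi's theorem (\autoref{thm:DGRCD}) for the density statement, and stability of the heat flow under pmGH convergence for part (ii). Two minor points of comparison. First, the paper organizes part (i) more economically by working with the essential boundary $\partial^*E$ rather than $\mathcal{F}E$: by definition $X\setminus\partial^*E$ is exactly the set where $\theta(E,\cdot)\in\{0,1\}$, and on $\partial^*E$ one invokes directly the mutual absolute continuity of $\Per_E$ and $\haus^h$ established in \cite{Am01,Am02} for PI spaces, so that the $\Per_E$-a.e.\ statement $\theta=\tfrac12$ coming from \autoref{thm:DGRCD} upgrades to an $\haus^h\res\partial^*E$-a.e.\ statement. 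Your route through the relative isoperimetric inequality is precisely the content of that mutual-absolute-continuity result, so you are reproving it rather than citing it. Second, the constancy of the dimension (\autoref{cor:repr}) is not actually needed here: the density-$\tfrac12$ conclusion only uses that the blow-up is \emph{some} half-space in \emph{some} $\setR^k$, which is already in \autoref{thm:DGRCD}; the paper cites \autoref{cor:repr} only for the convenient representation formula, and your invocation of it is likewise harmless but inessential.
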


\begin{proof}
Observe that in this framework, the perimeter measure coincides, up to constant, with the restriction of $\haus^{h}$ to the reduced boundary $\mathcal{F}E$, see \autoref{cor:repr} above. 
\medskip

Combining the outcomes of \cite{AmbrosioBrueSemola19,BruePasqualettoSemola19} and the general theory of sets of finite perimeter on PI spaces (see \cite{Am01,Am02}), we also know that the perimeter measure $\Per_E$ and $\haus^{h}$ are mutually absolutely continuous on $\partial^*E$, where
\begin{equation}\label{eq:essentialboundary}
\partial^*E:=\left\lbrace x\in X\, :\, \limsup_{r\to 0}\frac{\meas(B_r(x)\cap E)}{\meas(B_r(x))}>0\, \quad\text{and}\, \quad  \limsup_{r\to 0}\frac{\meas(B_r(x)\setminus  E)}{\meas(B_r(x))}>0\right\rbrace\, ,
\end{equation}
and, by De Giorgi's theorem for sets of finite perimeter on $\RCD(K,N)$ spaces \autoref{thm:DGRCD},
\begin{equation}\label{eq:density1/2}
\lim_{r\to 0}\frac{\meas(B_r(x)\cap E)}{\meas(B_r(x))}=\frac{1}{2}\, ,\quad\text{for $\Per_E$-a.e.\ $x\in X$}\, .
\end{equation}
In particular, \eqref{eq:density1/2} holds for $\haus^{h}$-a.e.\ $x\in \partial^*E$.

Observe now that
\[
X\setminus\partial^*E=\left\lbrace x\in X\, :\, \lim_{r\to 0}\frac{\meas(B_r(x)\cap E)}{\meas(B_r(x))}=0\right\rbrace  \bigcup \left\lbrace x\in X\, :\, \lim_{r\to 0}\frac{\meas(B_r(x)\cap E)}{\meas(B_r(x))}=1 \right\rbrace\, .
\]
\smallskip

The second part of the statement can be proved with an analogous strategy, relying on the stability of the heat flow together with a blow-up procedure, building on \autoref{thm:DGRCD}. We refer to \cite[Proposition 4.39, Corollary 5.21]{Semola20} for a more detailed argument.
\end{proof}

\begin{definition}
Given a set of finite perimeter $E\subset X$ we set
\[
E^{(t)}:=\{x\in X\, :\, \theta(E,x) =t\}\, ,
\]
where we recall that the density $\theta(E,\cdot)$ has been introduced in \autoref{def:denstheta}.
\end{definition}

\begin{remark}
By \autoref{prop:federertype} it holds
\begin{equation}\label{eq:tripartition}
X=E^{(1)}\cup E^{(1/2)}\cup E^{(0)}\, ,
\end{equation}
up to an $\haus^{h}$-negligible set.
\end{remark}

\begin{remark}\label{remark:heatdensity vs density}
	By ii) in \autoref{prop:federertype} it holds
	\[
		X = \left\lbrace \lim_{t\to 0}P_t\chi_E = 0 \right\rbrace 
		\cup 	
		\left\lbrace \lim_{t\to 0}P_t\chi_E = 1/2 \right\rbrace 
		\cup 	
		\left\lbrace
		 \lim_{t\to 0}P_t\chi_E = 1 
		\right\rbrace \, ,
	\]
	up to a $\haus^{h}$-negligible set. 
	A simple blow-up argument shows that
	\[
		E^{(0)} = \left\lbrace \lim_{t\to 0}P_t\chi_E = 0 \right\rbrace \, ,
		\quad
		E^{(1/2)} = \left\lbrace \lim_{t\to 0}P_t\chi_E = 1/2 \right\rbrace \, ,
		\quad
		E^{(1)} = \left\lbrace \lim_{t\to 0}P_t\chi_E = 1 \right\rbrace \, ,
	\]
		up to a $\haus^{h}$-negligible set.	
\end{remark}

In the following we shall adopt the notation $M\sim N $ to indicate that two Borel sets coincide up to $\haus^{h}$ negligible sets, i.e.\ $\haus^{h}(M\Delta N)=0$.

It follows from the discussion above that, for any Borel set $M\subset X$, it holds
\[
M\sim (M\cap E^{(1)})\cup (M\cap E^{(0)})\cup (M\cap E^{(1/2)})\, .
\]

\subsection{Good coordinates}
In this section we introduce the notion of {\it good coordinates at a boundary point $x$} and we employ them to give a pointwise notion of interior normal. The latter, very much in the Euclidean spirit, will be used to characterize blow-up.

\begin{definition}\label{def:goodcord}
	Let $(X,\dist,\meas)$ be an $\RCD(K,N)$ metric measure space for some $K\in\setR$ and $1\le N<\infty$ with essential dimension $1\le n\le N$ and let $E\subset X$ be a set of finite perimeter. Then, for any $x\in \mathcal{F}_n E$, any $n$-tuple of harmonic functions $(u_i):B_{r_x}(x)\to\setR^n$ satisfying the following properties is called a system of \emph{good coordinates} for $E$ at $x$.
	
	\begin{itemize}
		
		\item[(i)] For any $i,j\in \{ 1,\dots, n \}$,
		\begin{equation*}
			\lim_{r\to 0}\fint_{B_r(x)}\abs{\nabla u_i\cdot\nabla u_j-\delta_{ij}}\di\meas=\lim_{r\to 0}\fint_{B_r(x)}\abs{\nabla u_i\cdot\nabla u_j-\delta_{ij}}\di\abs{D\chi_E}=0\, .
		\end{equation*}
		
		\item[(ii)] For any $i\in\{1,\dots, n\}$ the following limits exist:
		\begin{equation*}
			\nu_i(x):=\lim_{r\to 0}\fint_{B_r(x)}\nu_E\cdot\nabla u_i\di\abs{D\chi_E},
		\end{equation*}
		\begin{equation*}
			\lim_{r\to 0}\fint_{B_r(x)}\abs{\nu_i(x)-\nu_E\cdot\nabla u_i}\di\abs{D\chi_E}=0 \, .
		\end{equation*}
	\end{itemize}
\end{definition}

As a consequence of \autoref{prop:harmcoord}, good coordinates at $x\in \mathcal{F}_n E$ exist for $|D\chi_E|$-a.e.\ point $x$.
Moreover, setting $\nu(x):=(\nu_1(x),\dots,\nu_n(x))$, it holds $\abs{\nu(x)}_{\setR^n}=1$.

\begin{remark}\label{remark:doublegoodcoordinates}
	Given two sets of finite perimeter $E,F\subset X$, by adapting the argument of the proof of \autoref{prop:harmcoord}, one can show the existence of $(u_i): B_r(x)\to \setR^n$ that are good coordinates at $x$ for both $E$ and $F$, for $\haus^h$-a.e.\ $x\in \mathcal{F} E \cap \mathcal{F} F$.
\end{remark}

\begin{proposition}\label{prop:blowupsandharmcoord}
Let $(X,\dist,\meas)$ be an $\RCD(K,N)$ metric measure space for some $K\in\setR$ and $1\le N<\infty$, and let $1\le n\le N$ be its essential dimension. Let $E\subset X$ be a set of finite perimeter. Then, for $\abs{D\chi_E}$-a.e.\ $x\in X$ and for any set of good coordinates $(u_i):B_{r_x}(x)\to\setR^n$, if $\nu\in\setR^n$ is given by \autoref{prop:harmcoord}, then the following holds. 
If the coordinates $(x_i)$ on the tangent space to $(X,\dist,\meas)$ at $x$ (which is Euclidean) are chosen so that the harmonic functions $(u_i)$, when rescaled properly, converge to $(x_i):\setR^n\to\setR^n$, then the blow-up of $E$ at $x$ in the sense of sets of finite perimeter is 
\[
H_{\nu}:=\{y\in\setR^n\, :\, y\cdot\nu \ge 0\}\, .
\]
\end{proposition}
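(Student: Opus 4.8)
\emph{Strategy of proof.} The plan is to blow up the Gauss--Green formula of \autoref{thm:GGRCDsmooth}, tested against the gradients of the good coordinates, and to match it with the Euclidean divergence theorem on a half-space. First I would fix $x\in\mathcal F_n E$ in the set of full $|D\chi_E|$-measure where simultaneously: a system of good coordinates $(u_i)\colon B_{r_x}(x)\to\setR^n$ exists, so that \eqref{eq:ortomper}, \eqref{eq:exsisttracenu} and \eqref{eq:reinflebper} hold; the pointwise values $\nabla u_i\cdot\nabla u_j(x)$ of \autoref{lemma:gradharm} exist and equal $\delta_{ij}$; the blow-up of $(X,\dist,\meas)$ at $x$ is $(\setR^n,\dist_{eucl},c_n\Leb^n,0)$, while $\Tan_x(X,\dist,\meas,E)$ consists of a single half-space by \autoref{thm:DGRCD} and \autoref{cor:repr}; and $|D\chi_E|$ has finite, strictly positive upper density at $x$ by \autoref{prop:asymdoub}. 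Then I would fix an infinitesimal sequence $r_m\downarrow 0$ realising the blow-up, so that the rescaled spaces $X_m\coloneqq(X,r_m^{-1}\dist,\meas_x^{r_m},x)$ converge in the pmGH topology to $(\setR^n,\dist_{eucl},c_n\Leb^n,0)$, $\chi_E\to\chi_F$ in $L^1_{\loc}$ with $F\subset\setR^n$ a half-space through the origin, and the perimeter measures of $E$ on $X_m$, call them $|D\chi_E|_m$, converge weakly to the perimeter $\mathsf P_F$ of $F$ in $(\setR^n,\dist_{eucl},c_n\Leb^n)$ --- the last point by the compactness and lower semicontinuity theory for sets of finite perimeter along pmGH-converging $\RCD$ spaces from \cite{AmbrosioBrueSemola19,BruePasqualettoSemola19}. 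By the splitting-map blow-up argument already invoked in the proof of \autoref{prop:indtest}, the rescaled functions $u_a^m(y)\coloneqq(u_a(y)-u_a(x))/r_m$ converge, together with their gradients, locally uniformly and in $H^{1,2}_{\loc}$ to linear functions $g_a\colon\setR^n\to\setR$ with $g_a(0)=0$ and $\nabla g_a\cdot\nabla g_b=\delta_{ab}$; choosing the coordinates $(x_a)$ on the tangent space as in the statement amounts exactly to $g_a(y)=y_a$, and then $F=H_e\coloneqq\{y\in\setR^n:y\cdot e\ge 0\}$ for a unique unit vector $e\in\setR^n$. The whole statement is thus reduced to proving $e=\nu$.

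To identify $e$, I would fix $\psi\in\Lip_c(\setR^n)$ supported in a ball $B_R(0)$ with $\mathsf P_F(\partial B_R(0))=0$, and use the pmGH-convergence to select Lipschitz functions $\psi_m$ on $X$, supported in $B_{Rr_m}(x)$, converging to $\psi$ uniformly and in energy along $X_m$. Applying \autoref{thm:GGRCDsmooth} to the vector field $v=\psi_m\nabla u_a$ --- which is admissible since $u_a$ is harmonic, hence $\nabla u_a\in H^{1,2}_{C,\loc}(TX)$ is bounded, $v\in H^{1,2}_C(TX)\cap D(\div)$ and $\div v=\nabla\psi_m\cdot\nabla u_a$ --- and rewriting the identity on $X_m$ yields
\[
\int_E\nabla\psi_m\cdot\nabla u_a^m\di\meas_x^{r_m}=-\int\tr_E(\psi_m)\,(\nu_E\cdot\nabla u_a^m)\di|D\chi_E|_m\,,
\]
where $\nu_E\cdot\nabla u_a^m$ is scale invariant and, as a function against $|D\chi_E|$, agrees with the one in the good-coordinate conditions. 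On the left-hand side I would pass to the limit via $\nabla\psi_m\cdot\nabla u_a^m\to\partial_a\psi$ and $\chi_E\to\chi_F$, obtaining $c_n\int_{H_e}\partial_a\psi\di\Leb^n=-c_n\,e_a\int_{\partial H_e}\psi\di\haus^{n-1}$ by the Euclidean divergence theorem. On the right-hand side I would split $\nu_E\cdot\nabla u_a^m=\nu_a+(\nu_E\cdot\nabla u_a-\nu_a)$: the constant part tends to $-\nu_a\int\psi\di\mathsf P_F=-c_n\,\nu_a\int_{\partial H_e}\psi\di\haus^{n-1}$ by weak convergence of the perimeter measures, while the remainder is bounded by $\|\psi_m\|_\infty\,|D\chi_E|_m(B_{Rr_m}(x))\,\fint_{B_{Rr_m}(x)}|\nu_a-\nu_E\cdot\nabla u_a|\di|D\chi_E|$, a product of a bounded quantity and a term vanishing by \eqref{eq:reinflebper}. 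Comparing the two limits gives $e_a=\nu_a$ for every $a=1,\dots,n$, i.e.\ $e=\nu$; hence the blow-up of $E$ at $x$ is $H_\nu$, and since $\Tan_x(X,\dist,\meas,E)$ is a singleton this holds along every infinitesimal sequence.

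The step I expect to be the main obstacle is the passage to the limit in the Gauss--Green identity: setting up the rescaled formula so that it is compatible with the pmGH-convergence and with the normalisations of $\meas_x^{r_m}$ and of the perimeter measures is somewhat delicate, though routine given \cite{AmbrosioBrueSemola19,BruePasqualettoSemola19}; the genuinely subtle point is the control of the rescaled normal traces $\nu_E\cdot\nabla u_a$ along the blow-up, for which condition (ii) in the definition of good coordinates --- that is, precisely \eqref{eq:reinflebper} --- is exactly what is needed, in combination with the asymptotic doubling of $|D\chi_E|$ at $x$.
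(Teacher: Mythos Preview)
Your proposal is correct and follows essentially the same route as the paper: blow up the Gauss--Green formula of \autoref{thm:GGRCDsmooth} tested against $\psi_m\nabla u_a^m$, pass to the limit using the $H^{1,2}$-convergence of the test functions and the weak convergence of the perimeter measures, and control the trace terms by splitting off the constant $\nu_a$ and killing the remainder with \eqref{eq:reinflebper}. The only organizational difference is that the paper first rotates the good coordinates so that $\nu=e_n$, reducing to a single test with $a=n$ and a sign determination, whereas you keep $\nu$ general and test with all $n$ coordinates to identify $e_a=\nu_a$ directly; both arguments are equivalent and rely on the same key ingredient you correctly singled out, namely condition (ii) of \autoref{def:goodcord}.
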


\begin{proof}
Up to rotating via a orthonormal matrix the harmonic good coordinates, we can assume without loss of generality that the following holds: for any $i\in\{1,\dots, n-1\}$
\begin{equation}\label{eq:ortobound}
\lim_{r\to 0}\fint_{B_r(x)}\abs{\nu_E\cdot\nabla u_i}\di\abs{D\chi_E}=0\,
\end{equation}
and 
\begin{equation}\label{eq:unitpoints}
\lim_{r\to 0}\fint_{B_r(x)}\abs{1-\nu_E\cdot\nabla u_k}\di\abs{D\chi_E}=0\, .
\end{equation}
Moreover, the blow-up of $E$ at $x$ is an $n$-dimensional Euclidean half-space and the $(u_i)$, when scaled properly, converge to the coordinate functions $(x_i):\setR^n\to\setR^n$.

Under these assumptions we wish to argue as in Step 1 in the proof of \cite[Proposition 4.7]{BruePasqualettoSemola19} to prove that the blow-up of $E$, read in these coordinates is
\[
\{y\in\setR^n\, :\, y_n  \ge  0\}\, .
\]

The arguments in \cite{BruePasqualettoSemola19} prove that, by \eqref{eq:ortobound}, the blow-up of $E$ at $x$ can either be 
\[
\{y\in\setR^n\, :\, y_n\le 0\}\, ,
\quad \text{or} \quad
\{y\in\setR^n\, :\, y_n\ge  0\}\, .
\]
We claim that \eqref{eq:unitpoints} is sufficient to exclude the first possibility. Let us denote by $H$ the blow-up of $E$ at $x$ (along a sequence of scalings with the properties above). By \eqref{eq:unitpoints}, for any smooth function with compact support $\phi:\setR^n\to\setR$, 
\begin{equation}\label{eq:limitintbyparts}
\int_H\frac{\partial \phi}{\partial x_n}\di\haus^n = -\int _{\mathcal{F}H}\phi\di\haus^{n-1}\, 
\end{equation}
which proves that $H=\{y\, :\, y_n \ge  0\}$.

\medskip

In order to check \eqref{eq:limitintbyparts}, arguing as in \cite{BruePasqualettoSemola19} we just need to pass to the limit the Gauss-Green integration by parts formulae along the sequence of scalings of $E$ converging to the blow-up.\\
Suppose without loss of generality that $\phi$ is compactly supported in $B_1(0^n)\subset \setR^n$. Then, thanks to \cite{AmbrosioHonda17} we find a sequence of uniformly bounded and uniformly Lipschitz functions $\phi_m:X\to\setR$ compactly supported in $B_{r_m}(x)$ such that, when considered along the sequence of scaled spaces $X_m:=(X,r_m^{-1}\dist,\meas_m,x)$, they converge strongly in $H^{1,2}$ to $\phi$ (see \cite[Definition 5.2]{AmbrosioHonda17}). Let $E_m$ be $E\subset X_m$ and $u_i^m$ be the good coordinates scaled by $u^m_i(y)=(u_i(y)-u_i(x))/r_m$. Then we can pass to the limit the Gauss-Green formulae
\[
\int_{E_m}\div(\phi_m\nabla u_n^m)\di\meas_m=-\int_{\mathcal{F}E_m}\phi_m\nabla u_n^m\cdot\nu_{E_m}\di\abs{D\chi_{E_m}}\, ,
\]
which are obtained by scaling from the Gauss-Green integration by parts formula for $E$, to obtain \eqref{eq:limitintbyparts}. Indeed the left-hand sides converge to the left-hand side thanks to the strong $H^{1,2}$-convergence of $\phi_m$ to $\phi$. The right-hand sides instead can be written as
\[
\int_{\mathcal{F}E_m}\phi_m\nabla u_n^m\cdot\nu_{E_m}\di\abs{D\chi_{E_m}}=\int_{\mathcal{F}E_m}\phi_m\di\abs{D\chi_{E_m}}+\int_{\mathcal{F}E_m}\phi_m\left(\nabla u_n^m\cdot\nu_{E_m}-1\right)\di\abs{D\chi_{E_m}}\, ,
\]
where the second contribution converges to $0$ as $m\to\infty$ thanks to \eqref{eq:unitpoints}, since
\begin{align*}
\abs{\int_{\mathcal{F}E_m}\phi_m\left(\nabla u_n^m\cdot\nu_{E_m}-1\right)\di\abs{D\chi_{E_m}}}\le& \int _{\mathcal{F}E_m}\abs{\phi_m}\abs{1-\nu_{E_{m}}\cdot\nabla u^m_n}\di\abs{D\chi_{E_m}}\\
\le &\max_{B_{r_m}(x)}\abs{\phi_m}\fint_{B_{r_m}(x)}\abs{1-\nu_E\cdot\nabla u_n}\di\abs{D\chi_E}\to 0\, ,
\end{align*}
as $m\to\infty$.
\end{proof}

\subsection{Main result}
Below we consider the behaviour of the unit normal vector field and of the Gauss-Green formula with respect to natural cut and paste operations with sets of finite perimeter.\\
We refer to \cite[Theorem 16.13]{Maggi12} for the analogous statement for sets of finite perimeter on $\setR^n$. 

\begin{definition}
	    Let $(X,\dist,\meas)$ be an $\RCD(K,N)$ space with essential dimension $1\le n \le N$.
		Let $E,F\subset X$ be sets of finite perimeter. We define
		\[
			\{\nu_E = \nu_F \} : = 	\{\mathcal{F} E\cap \mathcal{F} F\, : \, \nu_E = \nu_F \} \, ,
		\]
	    where $\{\mathcal{F} E\cap \mathcal{F} F\, : \, \nu_E = \nu_F \}$ is the set of those $x\in \mathcal{F}_n E\cap \mathcal{F}_n F$ such that there exist good coordinates $u: B_r(x)\to \setR^n$ for both $E$ and $F$, such that $\nu_E(x) = \nu_F(x)$.
	    
	    The set $\{\nu_E = - \nu_F \}$ is defined analogously.
\end{definition}

We briefly comment on the well-posedness of the previous definition. \autoref{remark:doublegoodcoordinates} ensures that
at \(\haus^h\)-a.e.\ \(x\in\mathcal F_n E\cap\mathcal F_n F\) one can find good coordinates for both \(E\) and \(F\) at \(x\) simultaneously.
Moreover, the fact that the set \(\{\nu_E=\nu_F\}\) is independent (up to \(\haus^h\)-null sets) of the chosen good coordinates
is a consequence of the following equivalent characterization of \(\{\nu_E=\nu_F\}\).
\begin{remark}\label{rmk:equiv_nu_E=nu_F}
Given that \(|D\chi_E|,|D\chi_F|\ll{\rm Cap}\), we have that \(\mu_{E,F}\coloneqq|D\chi_E|+|D\chi_F|\ll{\rm Cap}\).
Recall that \(L^0_E(TX)=L^0_{\rm Cap}(TX)_{|D\chi_E|}\) and \(L^0_F(TX)=L^0_{\rm Cap}(TX)_{|D\chi_F|}\),
where we are using the notation for quotient modules introduced before \autoref{lem:local_dim_drops}.
For brevity, let us also set
\[
L^0_{E,F}(TX)\coloneqq L^0_{\rm Cap}(TX)_{\mu_{E,F}}\, .
\]
Given that \(|D\chi_E|,|D\chi_F|\leq\mu_{E,F}\), it is clear that
\[
L^0_E(TX)=L^0_{E,F}(TX)_{|D\chi_E|}\, ,\quad
L^0_F(TX)=L^0_{E,F}(TX)_{|D\chi_F|}\, .
\]
Moreover, it is easy to check that the projection map
\(\pi_{|D\chi_E|}\colon L^0_{E,F}(TX)\to L^0_E(TX)\)
is bijective if restricted to \(L^0_{E,F}(TX)|_{\mathcal F E}\),
so we denote by \(i_E\colon L^0_E(TX)\to L^0_{E,F}(TX)\)
its partial inverse. Similarly, we can define the map
\(i_F\colon L^0_F(TX)\to L^0_{E,F}(TX)\). Therefore, it holds that
\[
\{\nu_E=\nu_F\}\sim\big\{x\in\mathcal F E\cap\mathcal F F\,:\,|i_E(\nu_E)-i_F(\nu_F)|(x)=0\big\},
\]
as one can easily check with an argument analogous to the one employed in the proof of \autoref{thm:cutandpaste} below.
\end{remark}

\medskip

For the sake of notation, we shall indicate by $\mu_E=\nu_E\cdot \Per_E$ the \emph{Gauss-Green measure}, where we understand that 
\[
\int _E\div v\di\meas=-\int v\di\mu_E\, ,
\]
for any set of finite perimeter $E$ and any vector field $v$ verifying the assumptions of \autoref{thm:GGRCDsmooth}.

\begin{theorem}\label{thm:cutandpaste}
Let $(X,\dist,\meas)$ be an $\RCD(K,N)$ metric measure space for some $K\in\setR$ and $1\le N<\infty$. 
Let $E,F\subset X$ be sets of finite perimeter. 
Then $E\cap F$, $E\cup F$ and $E\setminus F$ are sets of finite perimeter and
\begin{subequations}\begin{align}\label{eq:GGEcapF}
\mu_{E\cap F}&=\mu_E\res F^{(1)}+\mu_F\res E^{(1)}+\nu_E\haus^h\res\{\nu_E=\nu_F\}\, ,\\
\label{eq:GGEcupF}
\mu_{E\cup F}&=\mu_E\res F^{(0)} + \mu_F\res E^{(0)}+\nu_E\haus^h\res\{\nu_E=\nu_F\}\, ,\\
\label{eq:GGEsetminusF}
\mu_{E\setminus F}&=\mu_E\res F^{(0)}-\mu_F\res E^{(1)}+\nu_E\haus^h\res \{\nu_E=-\nu_F\}\, .
\end{align}\end{subequations}
\end{theorem}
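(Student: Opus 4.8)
The plan is to reduce everything to the single formula \eqref{eq:GGEcapF}; the other two follow by the elementary identities $E\cup F=X\setminus\big((X\setminus E)\cap(X\setminus F)\big)$ and $E\setminus F=E\cap(X\setminus F)$ together with the trivial behaviour of the Gauss--Green measure under complementation, namely $\mu_{X\setminus E}=-\mu_E$ (which is immediate from $\chi_{X\setminus E}=1-\chi_E$ and the definition via integration by parts) and the density identities $(X\setminus E)^{(t)}=E^{(1-t)}$, $\mathcal F(X\setminus E)=\mathcal F E$, $\nu_{X\setminus E}=-\nu_E$ $\haus^h$-a.e. Care is needed only with the \emph{orientation} bookkeeping: on $\{\nu_E=\nu_F\}$ the normals of $E$ and $F$ agree, whereas passing to complements turns the ``mutually tangent, same side'' locus into the ``mutually tangent, opposite side'' locus $\{\nu_E=-\nu_F\}$, which is exactly why that set appears in \eqref{eq:GGEsetminusF} and not in \eqref{eq:GGEcapF}.

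For \eqref{eq:GGEcapF} itself, first I would record that $E\cap F$ has finite perimeter (standard: $\chi_{E\cap F}=\chi_E\chi_F$, sub-additivity of total variation, plus finiteness of measure) so that $\mu_{E\cap F}$ is defined. Then, using \autoref{cor:repr}, both sides are mutually absolutely continuous with respect to $\haus^h\res\mathcal F(E\cap F)$, and by the tripartition \eqref{eq:tripartition} applied to $E$ relative to the points of $\mathcal F F$, and to $F$ relative to the points of $\mathcal F E$, it suffices to prove the identity of the two vector measures after restricting, up to $\haus^h$-null sets, to the pieces
\[
\mathcal F F\cap E^{(1)}\, ,\quad \mathcal F E\cap F^{(1)}\, ,\quad \mathcal F E\cap\mathcal F F\, ,
\]
since $\haus^h$-a.e.\ point of $\mathcal F(E\cap F)$ lies in one of these (a point of density $\tfrac12$ for $E\cap F$ cannot have $E$ or $F$ of density $0$ there, and if it has density $1$ for, say, $E$ then it must be a density-$\tfrac12$ point for $F$, etc.; this is the Federer-type input \autoref{prop:federertype}). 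On the first piece the blow-up of $E$ at $x$ is all of the tangent space while the blow-up of $F$ is a half-space, hence the blow-up of $E\cap F$ coincides with that of $F$, so $x\in\mathcal F(E\cap F)$ with the same normal and the same density of the perimeter; this gives $\mu_{E\cap F}\res(\mathcal F F\cap E^{(1)})=\mu_F\res(\mathcal F F\cap E^{(1)})=\mu_F\res E^{(1)}$, and symmetrically for the second piece.

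The heart of the matter is the third piece $\mathcal F E\cap\mathcal F F$. Here I would invoke \autoref{remark:doublegoodcoordinates} to choose, at $\haus^h$-a.e.\ such $x$, harmonic functions $u=(u_i)\colon B_{r_x}(x)\to\setR^n$ that are good coordinates for $E$ \emph{and} $F$ simultaneously, and \autoref{prop:blowupsandharmcoord} to identify the blow-up of $E$ with the half-space $H_{\nu_E(x)}$ and that of $F$ with $H_{\nu_F(x)}$, both read in the \emph{same} Euclidean coordinates. Now a purely Euclidean dichotomy applies: two half-spaces through the origin in $\setR^n$ either coincide (iff $\nu_E(x)=\nu_F(x)$), in which case $H_{\nu_E}\cap H_{\nu_F}=H_{\nu_E}$ and $0\in\mathcal F(H_{\nu_E}\cap H_{\nu_F})$ with normal $\nu_E(x)$; or they do not, in which case $\haus^{n-1}$-a.e.\ point of $\mathcal F(H_{\nu_E}\cap H_{\nu_F})$ near $0$ lies on exactly one of the two hyperplanes, so $0$ is \emph{not} a point of $\mathcal F(H_{\nu_E}\cap H_{\nu_F})$ of the normalized type — equivalently, $x\notin\mathcal F_n(E\cap F)$, so $x$ is $\Per_{E\cap F}$-negligible. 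Using the lower semicontinuity / convergence of perimeters along the blow-up (as in \cite{AmbrosioBrueSemola19,BruePasqualettoSemola19}) this blow-up information upgrades to the measure-theoretic statement that $\haus^h\res(\mathcal F(E\cap F)\cap\mathcal F E\cap\mathcal F F)$ is concentrated on $\{\nu_E=\nu_F\}$ and there the Radon--Nikodym density of $\mu_{E\cap F}$ equals $\nu_E$; on the complementary locus $\{\nu_E\ne\nu_F\}$ (which includes $\{\nu_E=-\nu_F\}$) the point is $\Per_{E\cap F}$-null, contributing nothing. Combining the three pieces yields \eqref{eq:GGEcapF}.

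The main obstacle I anticipate is the last passage — turning the pointwise blow-up identification into the equality of vector \emph{measures}, i.e.\ controlling that no perimeter of $E\cap F$ is lost or created along the mutually-tangent set and that the normal genuinely equals $\nu_E(x)$ there rather than merely being parallel to it. This requires either a density/Lebesgue-point argument for $\Per_{E\cap F}$ against the asymptotically doubling measures $\Per_E,\Per_F$ (so that one may compare $\frac{\Per_{E\cap F}(B_r(x))}{\Per_E(B_r(x))}\to\theta$ for suitable $\theta$), or the convergence-of-perimeters theorem for sequences of $\RCD$ spaces; the orientation (choosing $+\nu_E$ versus $-\nu_E$) is then forced by the first-order blow-up and by \eqref{eq:unitpoints}-type normalisation built into the definition of good coordinates. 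The rest is the clean but slightly tedious translation of \eqref{eq:GGEcapF} into \eqref{eq:GGEcupF} and \eqref{eq:GGEsetminusF} via complementation, where one must check that $\{\nu_{X\setminus E}=\nu_{X\setminus F}\}\sim\{\nu_E=\nu_F\}$ and $\{\nu_E=\nu_{X\setminus F}\}\sim\{\nu_E=-\nu_F\}$ up to $\haus^h$-null sets, using \autoref{rmk:equiv_nu_E=nu_F}.
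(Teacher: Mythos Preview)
Your proposal is correct and follows essentially the same route as the paper: decompose $\mathcal F(E\cap F)$ via \autoref{prop:federertype} into the three pieces $\mathcal F E\cap F^{(1)}$, $\mathcal F F\cap E^{(1)}$, and $\mathcal F E\cap\mathcal F F$, and on each piece identify the unit normal of $E\cap F$ by choosing common good coordinates (\autoref{remark:doublegoodcoordinates}) and reading off the blow-up half-space via \autoref{prop:blowupsandharmcoord}, exploiting that $\chi_{E\cap F}=\chi_E\chi_F$ is stable under $L^1_{\loc}$ limits. The only cosmetic differences are that the paper phrases each piece as a contradiction argument (assume $\nu_{E\cap F}(x)\neq\nu_E(x)$ and obtain two distinct half-spaces as blow-ups of the same set) and handles \eqref{eq:GGEcupF}, \eqref{eq:GGEsetminusF} by direct repetition rather than by your complementation trick; your anticipated ``main obstacle'' of passing from pointwise blow-up to equality of vector measures is exactly what the good-coordinates machinery resolves, since $\nu_{E\cap F}(x)$ is \emph{defined} through the averages \eqref{eq:exsisttracenu} and hence is determined by the blow-up.
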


Let us clarify the meaning of \eqref{eq:GGEcapF}, the meaning of \eqref{eq:GGEcupF} and \eqref{eq:GGEsetminusF} can be deduced by analogy.
With this notation we mean that, for any vector field $v\in H^{1,2}_C(TX)\cap D(\div )$ such that $\abs{v}\in L^{\infty}(\meas)$,
\begin{align*}
\int _{E\cap F}\div v\di\meas
=&-\int_{F^{(1)}} \langle \mathrm{tr}_E v,\nu_E \rangle \di\Per_E-\int_{E^{(1)}} \langle \mathrm{tr}_F v,\nu_F \rangle \di\Per_F\\
&-\int_{\{\nu_E = \nu_F\}} \langle \mathrm{tr}_E v,\nu_E \rangle \di\haus^h\, .
\end{align*}

\begin{proof}
If $E,F\subset X$ are sets of finite perimeter, then (on general ambient metric measure spaces) $E\cap F$, $E\cup F$ and $E\setminus F$ are sets of finite perimeter. 
\medskip

Let us make a preliminary observation. By decomposing, for any $x\in X$ and for any $r>0$, $(E\cup F)\cap B_r(x)$ into the disjoint union
\[
(E\cup F)\cap B_r(x)= \left((E\setminus F)\cap B_r(x)\right)\cup \left((F\setminus E)\cap B_r(x)\right)\cup \left((E\cap F)\cap B_r(x)\right)\, ,
\]
we can easily infer that, at any point of existence of the densities $\theta(\cdot,\cdot)$ (see \autoref{def:denstheta} for the relevant notion)
\begin{equation}\label{eq:estdensau}
\max\{\theta(E,x),\theta(F,x)\}\le \theta(E\cup F,x)\le \theta(E,x)+\theta(F,x)-\theta(E\cap F,x)\, .
\end{equation} 
\medskip
We are going to prove \eqref{eq:GGEcapF}, the proofs of the other statements being completely analogous. 
First, relying on \eqref{eq:estdensau} and arguing as in the proof of \cite[Theorem 16.3]{Maggi12}, we obtain that
\begin{equation}\label{eq:decdens}
(E\cap F)^{(1/2)}\sim \left(F^{(1)}\cap E^{(1/2)}\right)\cup \left(E^{(1)}\cap F^{(1/2)}\right)\cup\left((E\cap F)^{(1/2)}\cap E^{(1/2)}\cap F^{(1/2)}\right)\, 
\end{equation}
and that the three sets at the right-hand side have mutually $\haus^{h}$-negligible intersections.

Let us see now how to prove the representation formula \eqref{eq:GGEcapF} for the Gauss-Green measure of $E\cap F$.

Let us briefly recall the strategy in the Euclidean setting. Given \eqref{eq:decdens}, which identifies the reduced boundary of $E\cap F$, and De Giorgi's theorem, it remains only to determine the unit normal vector to $E\cap F$ on the different components of the reduced boundary in the decomposition. On $\setR^n$ the blow-up of a set of finite perimeter at a reduced boundary point is the half-space orthogonal to the unit normal vector. As we shall see, the combination of \autoref{prop:harmcoord}, \autoref{remark:doublegoodcoordinates} and \autoref{prop:blowupsandharmcoord} is a replacement of this fact in our framework. 
\medskip

Let us first deal with $\mu_{E\cap F}\res F^{(1)}$. We wish to prove that it coincides with $\mu_E\res F^{(1)}$.\\
Suppose by contradiction that this is not the case. Then we can find a set of positive $\haus^{h}$ measure on $(E\cap F)^{(1/2)}\cap F^{(1)}$ where the identity does not hold. In particular, applying \autoref{prop:harmcoord} \autoref{remark:doublegoodcoordinates} and \autoref{prop:blowupsandharmcoord} to both $E$ and $E\cap F$ we can find $x\in X$ such that: 
\begin{itemize}
\item[i)] $x\in F^{(1)}$ and $x\in \mathcal{F}_n E \cap \mathcal{F}_n (E\cap F)$;
\item[ii)] there exist $r>0$ and a set of good coordinates $(u_i):B_r(x)\to\setR^n$ for both $E$ and $E\cap F$ at $x$.
\end{itemize}
Moreover, by the contradiction assumption, we can suppose that
\begin{equation}\label{eq:diffnorm}
\nu_{E\cap F}(x)\neq \nu_E(x)\, ,
\end{equation}
where $\nu_{E\cap F}(x),\nu_E(x)\in\setR^n$,
\begin{equation}\label{z1}
\left(\nu_{E\cap F}(x)\right)_i=\lim_{s\to 0}\fint_{B_s(x)}\nu_{E\cap F}(y)\cdot\nabla u_i(y)\di\abs{D\chi_{E\cap F}}(y)\, ,\quad\text{for any $i=1,\dots,n$}\, ,
\end{equation}
and 
\begin{equation}\label{z2}
\left(\nu_{E}(x)\right)_i=\lim_{s\to 0}\fint_{B_s(x)}\nu_{E}(y)\cdot\nabla u_i(y)\di\abs{D\chi_{E}}(y)\, ,\quad\text{for any $i=1,\dots,n$}.
\end{equation}
Relying on \autoref{prop:blowupsandharmcoord}, by \eqref{eq:diffnorm}, $E$ and $E\cap F$ converge to different halfspaces in the $L^1_{\mathrm{loc}}$ sense along a sequence of rescaled pointed spaces $X_i:=(X,r_i^{-1}\dist,\meas^{r_i}_{x},x)$ converging to the tangent Euclidean space $(\setR^n,\dist_{eucl},\haus^n,0^n)$.\\
We claim that this yields a contradiction. Indeed, $x\in F^{(1)}$, equivalently $x$ is a point of density $1$ of $F$. This easily implies that along the converging sequence $X_i$, $\chi_F$ converge in the $L^1_{\loc}$ sense to the constant function $1$. Since
\[
\chi_{E\cap F}=\chi_E\cdot\chi_F\, 
\]
and $L^1_{\loc}$ convergence is stable under multiplication (see for instance \cite[Lemma 3.5]{AmbrosioBrueSemola19}), we infer that $E$ and $E\cap F$ have the same blow-up along the same sequence of converging scaled pointed spaces. 
This provides the sought contradiction, hence 
\[
\mu_{E\cap F}\res F^{(1)}=\mu_E\res F^{(1)}\, .
\]
In order to avoid confusion, let us comment on the previous assertion. Since isomorphic (pointed) metric measure spaces are identified when dealing with pmGH convergence, any two half-spaces in a Euclidean space should be considered the same from this perspective and this statement might sound strange. Here the subtle point is that good coordinates are providing a canonical way to parametrize the tangent. Once the limit coordinates are fixed, the half-spaces are uniquely identified (in terms of their unit normal vectors).

\medskip

The argument to prove that
\[
\mu_{E\cap F}\res E^{(1)}=\mu_F\res E^{(1)}
\]
is completely analogous, based on the fact that at a point in $E^{(1)}$, the set $E$, when considered along a sequence of scalings of the ambient space converging to a tangent, converges to the whole space. So we omit the details.  
\medskip

In order to prove that 
\[
\mu_{E\cap F}\res \{\nu_E=\nu_F\}=\nu_E\haus^{h}\res\{\nu_E=\nu_F\}\, ,
\]
we just need a slight variant of the argument used above. 
By contradiction, we can find $x\in \mathcal{F}_n(E\cap F)\cap \mathcal{F}_n E\cap \mathcal{F}_n F$ such that 
\begin{itemize}
	\item[i)] there exist $r>0$ and a set of good coordinates $(u_i):B_r(x)\to\setR^n$ for $E$ and $F$ and $E\cap F$ at $x$ such that $\nu_E(x) = \nu_F(x)$;
	\item[ii)] there exist the limits
	\[
		\left(\nu_{E\cap F}(x)\right)_i=\lim_{s\to 0}\fint_{B_s(x)}\nu_{E\cap F}(y)\cdot\nabla u_i(y)\di\abs{D\chi_{E\cap F}}(y)\, ,\quad\text{for any $i=1, \dots, n$}\, ,
	\]
	and $\nu_{E\cap F}(x) \neq \nu_E(x)$.
\end{itemize}
Denoting by $E_r$ and $F_r$, respectively, the set $E$ and $F$ in the rescaled pointed spaces $X_i:=(X,r_i^{-1}\dist,\meas^{r_i}_{x},x)$ we know that
\[
	\chi_{E_r} \to H_{\nu_E(x)} \, , \quad
	\chi_{F_r} \to H_{\nu_F(x)} \, , \quad
	\chi_{E_r\cap F_r} \to H_{\nu_{E\cap F}(x)} \, , 
\]
in the  $L^1_{\mathrm{loc}}$ topology, as a consequence of \autoref{prop:blowupsandharmcoord}. The stability of the $L^1_{\loc}$ convergence under multiplication implies that
\[
  \chi_{H_{\nu_E(x)}} \cdot  \chi_{H_{\nu_F}(x)}
  = \lim_{r\to 0}	\chi_{E_r} \cdot \chi_{F_r}
  = \lim_{r\to 0} \chi_{E_r\cap F_r}
  = \chi_{H_{\nu_{E\cap F}(x)}} \, ,
\]
which contradicts ii).
\end{proof}

\begin{corollary}\label{cor:GGmeasureincluded}
Let $(X,\dist,\meas)$ be an $\RCD(K,N)$ metric measure space and let $E\subset F\subset X$ be sets of finite perimeter. Then $\nu_E=\nu_F$ on $\mathcal{F}E\cap\mathcal{F}F$ $\haus^h$-a.e.,  and 
\[
\mu_E=\mu_E\res F^{(1)}+\nu_F\haus^{h}\res\left(\mathcal{F}E\cap\mathcal{F}F\right)\, .
\]
\end{corollary}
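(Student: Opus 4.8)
The plan is to read the identity off from \autoref{thm:cutandpaste} applied to the pair $(E,F)$, exploiting that $E\subset F$ forces $E\cap F=E$. With this substitution, \eqref{eq:GGEcapF} becomes
\[
\mu_E=\mu_E\res F^{(1)}+\mu_F\res E^{(1)}+\nu_E\haus^h\res\{\nu_E=\nu_F\}\, ,
\]
so it suffices to prove that (i) $\mu_F\res E^{(1)}=0$, and (ii) $\{\nu_E=\nu_F\}\sim\mathcal F E\cap\mathcal F F$ with $\nu_E=\nu_F$ holding $\haus^h$-a.e.\ on $\mathcal F E\cap\mathcal F F$. Observe that (ii) is precisely the first assertion of the statement, and that inserting (i) and (ii) into the display rewrites the last term as $\nu_F\haus^h\res(\mathcal F E\cap\mathcal F F)$, which is the claimed identity.

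For (i): since $E\subset F$, at any point $x$ with $\theta(E,x)=1$ we have $\meas(F\cap B_r(x))\geq\meas(E\cap B_r(x))$ for every $r>0$, hence $\theta(F,x)=1$ as well; thus $E^{(1)}\subset F^{(1)}$, and in particular $E^{(1)}\cap F^{(1/2)}=\emptyset$. On the other hand, by \autoref{cor:repr} and \autoref{prop:federertype}(i), $\mu_F=\nu_F\,\Per_F$ is concentrated, modulo $\Per_F$-null sets, on $\mathcal F F\sim F^{(1/2)}$; therefore $\mu_F\res E^{(1)}=0$.

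For (ii): by \autoref{cor:repr}, $\mathcal F E\sim\mathcal F_n E$ and $\mathcal F F\sim\mathcal F_n F$ up to $\haus^h$-null sets, and since by definition $\{\nu_E=\nu_F\}\subset\mathcal F_n E\cap\mathcal F_n F$, it remains only to check that $\nu_E=\nu_F$ $\haus^h$-a.e.\ on $\mathcal F_n E\cap\mathcal F_n F$. By \autoref{remark:doublegoodcoordinates}, at $\haus^h$-a.e.\ $x\in\mathcal F_n E\cap\mathcal F_n F$ there exists a system $(u_i)\colon B_{r_x}(x)\to\setR^n$ of good coordinates for \emph{both} $E$ and $F$; fix one such $x$ at which, in addition, the conclusions of \autoref{prop:blowupsandharmcoord} hold for both sets. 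Reading the blow-ups in the coordinates induced by the $(u_i)$ and using \autoref{thm:DGRCD} (uniqueness of the blow-up), we obtain that $\chi_E$ and $\chi_F$ converge in $L^1_{\loc}$, along any sequence of rescalings converging to the Euclidean tangent $(\setR^n,\dist_{eucl},c_n\Leb^n,0^n)$, to $\chi_{H_{\nu_E(x)}}$ and $\chi_{H_{\nu_F(x)}}$ respectively, with $\abs{\nu_E(x)}_{\setR^n}=\abs{\nu_F(x)}_{\setR^n}=1$. Since $\chi_E\leq\chi_F$ pointwise, the same inequality passes to the limit, i.e.\ $H_{\nu_E(x)}\subset H_{\nu_F(x)}$ up to $\Leb^n$-null sets; as one closed half-space through the origin of $\setR^n$ can be contained in another only if their unit normals coincide, this forces $\nu_E(x)=\nu_F(x)$. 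This proves (ii) and hence the corollary.

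The delicate point is (ii): just as in the proof of \autoref{thm:cutandpaste}, one must use that the good coordinates yield a \emph{canonical} parametrization of the tangent space by $\setR^n$, so that $H_{\nu_E(x)}$ and $H_{\nu_F(x)}$ are half-spaces of the \emph{same} $\setR^n$ and the set-theoretic inclusion $E\subset F$ genuinely descends to the blow-ups. Granting this, the elementary half-space fact and the density bookkeeping in (i) are routine.
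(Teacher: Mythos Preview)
Your proof is correct and follows essentially the same route as the paper: apply \autoref{thm:cutandpaste} with $E\cap F=E$, kill the term $\mu_F\res E^{(1)}$ via $E^{(1)}\subset F^{(1)}$, and identify $\{\nu_E=\nu_F\}\sim\mathcal F E\cap\mathcal F F$ by a blow-up argument. The paper is terser (it leaves the vanishing of $\mu_F\res E^{(1)}$ implicit and just says ``the same blow-up argument'' for (ii)); your version spells out the details and uses the monotonicity $\chi_E\le\chi_F$ to pass to the containment $H_{\nu_E(x)}\subset H_{\nu_F(x)}$ of half-spaces, which is a clean and equally valid way to run the blow-up.
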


\begin{proof}
From \autoref{thm:cutandpaste} and the identity $E=E\cap F$ we deduce
\[
	\mu_E = \mu_{E\cap F}
	=\mu_E\res F^{(1)} + \nu_F\haus^{h}\res\{\nu_E = \nu_F\}\, .
\]
Hence, it suffices to prove that 
\[
	\mathcal{F} E\cap\mathcal{F} F \sim \{\nu_E = \nu_F\} \, .
\]
The latter follows from the same blow-up argument performed in the proof of \autoref{thm:cutandpaste}.
\end{proof}

\begin{remark}
The particular case of the constructions above when $E$ and $F$ have essentially disjoint reduced boundaries has been considered before in \cite{AntonelliPasqualettoPozzetta21}, see in particular Lemma 2.5 and Proposition 2.31 therein.
\end{remark}

\section{Gauss-Green formulae for essentially bounded divergence measure vector fields}\label{sec5}

The aim of this section is to sharpen the integration by parts formulae, introduced in \cite{BuffaComiMiranda19} on $\RCD(K,\infty)$ metric measure spaces (after the developments of the Euclidean theory in \cite{Anzellotti83,ChenTorresZiemer09,ComiPayne20}), for {\it essentially bounded divergence vector fields} on sets with finite perimeter.

\begin{definition}\label{def:essbounddivmeas}
Let $(X,\dist,\meas)$ be an $\RCD(K,N)$ metric measure space. We say that a vector field $V\in L^{\infty}(TX)$ is an essentially bounded divergence measure vector field if its distributional divergence is a finite Radon measure, that is if $\div V$ is  finite Radon measure such that, for any Lipschitz function with compact support $g:X\to\setR$, it holds
\begin{equation}\label{eq:intbypartsebdm}
\int_X g\di \div V=-\int_X \nabla g\cdot V\di\meas\, .
\end{equation}
We shall denote the class of these vector fields by $\mathcal{DM}^{\infty}(X)$ and sometimes, to ease the notation, we will abbreviate $\int g\di\div V$ as $\int g\div V$.

Analogously, for any open set $\Omega\subset X$ it is possible to introduce the space $\mathcal{DM}^{\infty}(\Omega)$ of locally essentially bounded divergence measure vector fields in $\Omega$.
\end{definition}

It turns out that, despite not being able to pointwise define a vector field with such low regularity over the reduced boundary of a set of finite perimeter, it is possible to define interior and exterior normal traces, possibly different, playing the role of the term $V\cdot\nu_E$ in the Gauss-Green formula.
The rigorous result is the following.

\begin{theorem}\label{thm:GaussGreenEssDivVect}
	Let $(X,\dist,\meas)$ be an $\RCD(K,N)$ metric measure space for some $K\in\setR$ and $1\le N<\infty$. Let $E\subset X$ be a set of finite perimeter and let $V\in\mathcal{DM}^{\infty}(X)$. Then we have the Gauss-Green integration by parts formulae: for any function $\phi\in \Lip_c(X)$ it holds
	\[\begin{split}
		\int_{E^{(1)}}\phi \div V+\int_E\nabla\phi\cdot V\di\meas&=-\int_{\mathcal{F}E}\phi\left(V\cdot\nu_E\right)_{\mathrm{int}}\di\Per_E \, ,\\
		\int_{E^{(1)}\cup\mathcal{F}E}\phi \div V+\int_E\nabla\phi\cdot V\di\meas&=-\int_{\mathcal{F}E}\phi\left(V\cdot\nu_E\right)_{\mathrm{ext}}\di\Per_E\, ,
	\end{split}\]
	where $(V\cdot \nu_E)_{_{\mathrm{int}}}$ and $(V\cdot \nu_E)_{_{\mathrm{ext}}}$ belong to $L^{\infty}(\mathcal{F}E,\Per_E)$ and satisfy
   \begin{subequations}\begin{align}\label{eq:inftyboundtraceintsharp}
	\norm{\left(V\cdot\nu_E\right)_{\mathrm{int}}}_{L^{\infty}(\mathcal{F}E,\Per_E)}&\le \norm{V}_{L^{\infty}(E,\meas)} \, ,\\
  \label{eq:inftyboundtraceextsharp}
	\norm{\left(V\cdot\nu_E\right)_{\mathrm{ext}}}_{L^{\infty}(\mathcal{F}E,\Per_E)}&\le \norm{V}_{L^{\infty}(X\setminus E,\meas)}\, .
   \end{align}\end{subequations}
\end{theorem}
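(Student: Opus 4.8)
The plan is to build the normal traces by a mollification/approximation procedure at the level of the vector field, combined with the already established Gauss--Green formula for regular vector fields (\autoref{thm:GGRCDsmooth}) and the structural results on sets of finite perimeter on $\RCD(K,N)$ spaces. First I would use the heat flow to regularize $V$: set $V_t \coloneqq \mathsf h_t V$ (the heat semigroup acting on vector fields, or equivalently a suitable regularization through the Hodge/Bochner heat flow) so that $V_t \in H^{1,2}_C(TX)\cap D(\div)$ with $|V_t|\in L^\infty(\meas)$ and $\|V_t\|_{L^\infty} \le \|V\|_{L^\infty}$, while $\div V_t = \mathsf h_t \div V$ converges weakly$^*$ to $\div V$ and $V_t \to V$ in $L^2_{\mathrm{loc}}$ and $\meas$-a.e.\ along a subsequence. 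For fixed $\phi \in \Lip_c(X)$, apply \autoref{thm:GGRCDsmooth} to the vector field $\phi V_t$ (or rather test the smooth Gauss--Green formula for $E$ against $\phi V_t$), obtaining
\[
\int_E \div(\phi V_t)\di\meas = -\int_{\mathcal F E}\phi\,\langle \tr_E V_t, \nu_E\rangle\di\Per_E,
\]
i.e.\ $\int_E \phi\,\div V_t\di\meas + \int_E \nabla\phi\cdot V_t\di\meas = -\int_{\mathcal F E}\phi\,\langle \tr_E V_t,\nu_E\rangle\di\Per_E$.

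The second step is to pass to the limit $t\downarrow 0$ in each term. The term $\int_E \nabla\phi\cdot V_t\di\meas \to \int_E \nabla\phi\cdot V\di\meas$ by dominated convergence. The delicate term is $\int_E \phi\,\div V_t\di\meas = \int_X (\chi_E\phi)\,\mathsf h_t(\div V)\di\meas = \int_X \mathsf h_t(\chi_E\phi)\di\div V$: here one uses that $\mathsf h_t(\chi_E\phi)$ converges, as $t\downarrow 0$, to $\phi$ on $E^{(1)}$, to $0$ on $E^{(0)}$, and to $\tfrac12\phi$ on $\mathcal F E$ (this is exactly the content of part (ii) of \autoref{prop:federertype} together with \autoref{remark:heatdensity vs density}, applied pointwise $\div V$-a.e.\ once one knows $|\div V|$ is concentrated where the heat-flow density limit exists — and by \autoref{prop:federertype} that limit lies in $\{0,\tfrac12,1\}$ $\haus^h$-a.e., hence $|\div V|\ll\haus^h$ on the relevant part; more carefully one splits $\div V = \div V\res E^{(1)} + \div V\res\mathcal F E + \div V\res E^{(0)}$ plus a possibly singular part on $\mathcal F E$). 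This produces, in the limit, the quantity $\int_{E^{(1)}}\phi\di\div V + \tfrac12\int_{\mathcal F E}\phi\di\div V$. Comparing the two normalizations (density $1$ vs.\ density $\tfrac12$ on $\mathcal F E$) yields the \emph{interior} and \emph{exterior} versions: the interior trace arises from the representative that puts full weight on $E^{(1)}$ and no weight of $\div V\res\mathcal F E$, the exterior from the one that puts weight on $E^{(1)}\cup\mathcal F E$. Concretely, I would \emph{define}
\[
-\int_{\mathcal F E}\phi\,(V\cdot\nu_E)_{\mathrm{int}}\di\Per_E \coloneqq \int_{E^{(1)}}\phi\,\div V + \int_E \nabla\phi\cdot V\di\meas,
\]
check this is a bounded linear functional of $\phi$ (using the uniform bound below), invoke Riesz representation with respect to $\Per_E$ to get $(V\cdot\nu_E)_{\mathrm{int}}\in L^\infty(\mathcal F E,\Per_E)$, and similarly for the exterior trace with $E^{(1)}\cup\mathcal F E$ in place of $E^{(1)}$.

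The third step is the norm bounds \eqref{eq:inftyboundtraceintsharp}–\eqref{eq:inftyboundtraceextsharp}. For the interior bound: from the regularized identity $|\int_{\mathcal F E}\phi\langle\tr_E V_t,\nu_E\rangle\di\Per_E| \le |\int_E\div(\phi V_t)\di\meas|$ one does not directly get the sharp constant, so instead I would localize. Given a Borel set $B\subset\mathcal F E$, approximate $\chi_B$ by $\phi_k\in\Lip_c$ with $\phi_k\to\chi_B$ in $L^1(\Per_E)$ and with $\supp\phi_k$ shrinking toward $B$; then $\int_B (V\cdot\nu_E)_{\mathrm{int}}\di\Per_E$ is controlled, via the defining identity, by $\liminf_k [\,|\int_{E^{(1)}}\phi_k\div V| + |\int_E\nabla\phi_k\cdot V\di\meas|\,]$, and one shows the first summand is negligible in the limit (as $\phi_k$ concentrates near $\mathcal F E$ which is $|\div V\res E^{(1)}|$-null after the decomposition) while the second is bounded by $\|V\|_{L^\infty(E,\meas)}\int|\nabla\phi_k|\di\meas\res E$, whose limsup is $\|V\|_{L^\infty(E,\meas)}\Per_E(B)$ by the very definition of perimeter via relaxation and a coarea-type cutoff. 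The exterior bound follows by applying the interior statement to the complement $X\setminus E$ (which is also of finite perimeter, with $\nu_{X\setminus E} = -\nu_E$ and $\mathcal F(X\setminus E)\sim\mathcal F E$) and the vector field $-V$, noting $(X\setminus E)^{(1)} = E^{(0)}$, so that $\int_{E^{(0)}}\phi\div V + \int_{X\setminus E}\nabla\phi\cdot V\di\meas = -\int_{\mathcal F E}\phi(V\cdot\nu_E)_{\mathrm{ext}}\di\Per_E$ after using $\int_X\phi\div V + \int_X\nabla\phi\cdot V\di\meas = 0$ (the global integration by parts, valid since $\phi\in\Lip_c$) to convert between the two.

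\textbf{Main obstacle.} The crux is the limiting behaviour of $\mathsf h_t(\chi_E\phi)$ against the measure $\div V$, because $\div V$ may well charge $\mathcal F E$ (indeed it may have a singular part there), and on that part the pointwise heat-flow limit equals $\tfrac12\phi$ only $\haus^h$-a.e., so one must first argue that $\div V$, restricted to $\mathcal F E$, is absolutely continuous with respect to $\Per_E$ (equivalently $\haus^h\res\mathcal F E$) — or more precisely that the singular-to-$\Per_E$ part of $\div V$ is concentrated off $\mathcal F E$, where the density limit is $0$ or $1$ and no ambiguity arises. This is where the finite-dimensionality $N<\infty$ and the sharp structure theory (\autoref{cor:repr}, \autoref{prop:federertype}, and the fact that $\Per_E\sim\haus^h\res\mathcal F E$ with $\mathcal F E\subset\mathcal R_n$) are essential and where the present treatment improves on \cite{BuffaComiMiranda19}: one gets that $\div V\res\mathcal F E \ll \Per_E$ automatically, so the $\tfrac12$-normalization term $\tfrac12\int_{\mathcal F E}\phi\di\div V$ is itself of the form $\int_{\mathcal F E}\phi\,g\di\Per_E$ with $g\in L^\infty$, and it can be absorbed into the definition of the trace; the discrepancy between interior and exterior traces is then exactly $(V\cdot\nu_E)_{\mathrm{ext}} - (V\cdot\nu_E)_{\mathrm{int}} = \frac{\di(\div V\res\mathcal F E)}{\di\Per_E}$, which vanishes precisely when $\div V$ does not charge $\mathcal F E$, recovering \autoref{thm:GGRCDsmooth} in the regular case.
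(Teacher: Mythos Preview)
Your approach --- regularizing the vector field $V$ via a heat flow rather than regularizing $\chi_E$ --- is genuinely different from the paper's, which builds directly on \cite{BuffaComiMiranda19}: there one mollifies $\chi_E$, defines the traces as densities of the weak limits of $\nabla P_t\chi_E\cdot(\chi_E V)$ and $\nabla P_t\chi_E\cdot(\chi_{E^c}V)$ with respect to $|D\chi_E|$, and the paper's contribution is to upgrade two points: (a) identifying the sets $\tilde E^{(t)}$ of \cite{BuffaComiMiranda19} with $E^{(1)},\mathcal F E,E^{(0)}$ up to $|\div V|$-null sets, via \autoref{lemma:divllHh} ($|\div V|\ll\haus^h$) and \autoref{prop:federertype}; and (b) sharpening the constant $2$ in the trace bounds to $1$.

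Your route can plausibly recover the Gauss--Green identities, but there is a real gap at the sharp one-sided bounds \eqref{eq:inftyboundtraceintsharp}--\eqref{eq:inftyboundtraceextsharp}, which are the main new content. The localization argument you sketch does not work: for Borel $B\subset\mathcal F E$ and $\phi_k\in\Lip_c$ with $\phi_k\to\chi_B$ in $L^1(\Per_E)$, there is no mechanism forcing $\limsup_k\int_E|\nabla\phi_k|\,\di\meas\le\Per_E(B)$; the relaxation definition of perimeter controls $\int|\nabla\psi_k|$ only for $\psi_k\to\chi_E$, not for approximations of codimension-one subsets of $\mathcal F E$. Even in $\setR^n$ this requires a carefully constructed normal-direction cutoff, and on an $\RCD(K,N)$ space you would effectively need good coordinates and a local product structure along $\mathcal F E$. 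More structurally, smoothing $V$ by the heat flow mixes its values over $E$ and $X\setminus E$, so the regularized identity naturally yields a bound by $\|V\|_{L^\infty(X)}$, not the one-sided $\|V\|_{L^\infty(E)}$. The paper avoids this by keeping the factor $\chi_E$ attached to $V$ throughout: one estimates $\int\phi\chi_E(\nabla P_t\chi_E\cdot V)\,\di\meas$ using $\chi_E|V|\le\|V\|_{L^\infty(E)}$, writes $\nabla P_t\chi_E=\nu_t\,(P_t^*|D\chi_E|)\,\meas$ with $e^{Kt}|\nu_t|\to 1$ in $L^1(\mu_t)$ (so that $|\nu_E|=1$ enters), and then uses $P_t(\phi\chi_E)\to\tfrac12\phi$ pointwise $|D\chi_E|$-a.e.\ (density $\tfrac12$ at reduced boundary points, \autoref{prop:federertype}) to turn the factor $\tfrac12$ into the sharp constant $1$ in place of the $2$ of \cite{BuffaComiMiranda19}. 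A secondary circularity: to pass to the limit in $\int\mathsf h_t(\chi_E\phi)\,\di\div V$ via \autoref{prop:federertype} you already need $|\div V|\ll\haus^h$; the paper proves this as a separate lemma (\autoref{lemma:divllHh}), by testing the Gauss--Green formula on balls, before invoking the pointwise heat-flow density limits.
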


 Given an essentially bounded divergence measure vector field $V\in \mathcal{DM}^{\infty}(X)$, it is proved in \cite[Section 6.5]{BuffaComiMiranda19} that there exist a subsequence $t_k \to 0$, a function $\tilde \chi_E \in L^\infty(|\div V|)$ and measures $D\chi_E(\chi_E V)$, $D\chi_E(\chi_{E^c}V)$ such that
 \[\begin{split}
 	P_{t_k}\chi_E \overset * \weakto &\,\tilde \chi_E \, 
 	\quad
 	\text{in $L^\infty(|\div V|)$} \, ,\\
 	\nabla P_{t_k} \chi_E\cdot (\chi_EV)\weakto &\,D\chi_E(\chi_E V) \, ,\\
 	\nabla P_{t_k}\chi_E\cdot (\chi_{E^c}V)\weakto &\,D\chi_E(\chi_{E^c} V)\, .
 \end{split}\]
 Notice that, a priori, $D\chi_E(\chi_E V)$, $D\chi_E(\chi_{E^c} V)$ and $\tilde \chi_E$ depend on the choice of the subsequence.
 It can be proven that $D\chi_E(\chi_E V)$ and $D\chi_E(\chi_{E^c}V)$ are both absolutely continuous w.r.t.\ $\abs{D\chi_E}$. Therefore we are entitled to consider their densities and set
 \begin{subequations}\begin{align}\label{eq:inttrace}
 	2D\chi_E(\chi_E V)&=\left(V\cdot\nu_E\right)_{\mathrm{int}}\abs{D\chi_E} \, ,\\
 \label{eq:exttrace}
 	2D\chi_E(\chi_{E^c} V)&=\left(V\cdot\nu_E\right)_{\mathrm{ext}}\abs{D\chi_E}\, ,
 \end{align}\end{subequations}
 respectively. In \cite[Proposition 6.5]{BuffaComiMiranda19} it is shown that
  \begin{subequations}\begin{align}\label{eq:inftyboundtraceintnonsharp}
 	\norm{\left(V\cdot\nu_E\right)_{\mathrm{int}}}_{L^{\infty}(\mathcal{F}E,\Per_E)}&\le 2\norm{V}_{L^{\infty}(E,\meas)} \, ,\\
 \label{eq:inftyboundtraceextnonsharp}
 	\norm{\left(V\cdot\nu_E\right)_{\mathrm{ext}}}_{L^{\infty}(\mathcal{F}E,\Per_E)}&\le 2\norm{V}_{L^{\infty}(X\setminus E,\meas)}\, .
\end{align}\end{subequations}

 In Theorem 6.22 \cite{BuffaComiMiranda19} the authors proved that, under the assumption that any weak-star limit in $L^\infty(|D\chi_E|)$ of $P_t\chi_E$ is constant, for any function $\phi\in \Lip_c(X)$ it holds
 \begin{subequations}\begin{align}\label{eq:GGformula nonsharp1}
 	\int_{\tilde E^{(1)}}\phi \div V+\int_E\nabla\phi\cdot V\di\meas&=-\int_{\mathcal{F}E}\phi\left(V\cdot\nu_E\right)_{\mathrm{int}}\di\Per_E \, ,\\
\label{eq:GGformula nonsharp2}
 	\int_{\tilde E^{(1)}\cup\tilde E^{(1/2)}}\phi \div V+\int_E\nabla\phi\cdot V\di\meas&=-\int_{\mathcal{F}E}\phi\left(V\cdot\nu_E\right)_{\mathrm{ext}}\di\Per_E\, ,
 \end{align}\end{subequations}
where $\tilde E^{(t)} : = \{ \tilde \chi_E = t \}$.
\medskip

Thanks to \autoref{prop:federertype} ii) we have a good understanding of the pointwise behaviour of the evoluted of the indicator function $\chi_E$ of a set with finite perimeter $E\subset X$ through the heat flow. As proven in \autoref{prop:federertype} ii), sets of finite perimeter on $\RCD(K,N)$ spaces have the following property. The sequence $P_t\chi_E$ has a unique weak-star limit as $t\to 0$ in $L^{\infty}(X,\abs{D\chi_E})$, and it is the constant function $1/2$. Hence we are in a position to apply Theorem 6.20 of \cite{BuffaComiMiranda19} to get \eqref{eq:GGformula nonsharp1} and \eqref{eq:GGformula nonsharp2}. This being said it becomes clear that \autoref{thm:GaussGreenEssDivVect} follows from the analysis is \cite{BuffaComiMiranda19} provided we show the following facts:
\begin{itemize}
	\item[(a)] 
	 $\tilde E^{(1)} = E^{(1)}$ and $\tilde E^{(1/2)} = \mathcal{F} E$ up to a $|\div V|$-negligible set;
	\item[(b)] 
	The bounds for the normal traces \eqref{eq:inftyboundtraceintnonsharp}, \eqref{eq:inftyboundtraceextnonsharp} hold in the improved form \eqref{eq:inftyboundtraceintsharp}, \eqref{eq:inftyboundtraceextsharp}.
\end{itemize}
The sharp bound claimed in (b) in the case of vector fields in $H^{1,2}_C$ is already contained in \autoref{thm:GGRCDsmooth}, since in \cite{BruePasqualettoSemola19} it was proved that $\abs{\nu_E}=1$ a.e.\ w.r.t.\ $\abs{D\chi_E}$. It turns out that the sharp trace bounds can be obtained also for the less regular vector fields in $\mathcal{DM}^{\infty}(X)$.
\medskip

As a preliminary step, let us observe that, as it happens in the Euclidean case, the divergence of an essentially bounded divergence measure vector field is always absolutely continuous w.r.t.\ $\haus^h$.

\begin{lemma}\label{lemma:divllHh}
Let $(X,\dist,\meas)$ be an $\RCD(K,N)$ metric measure space. Then for any vector field $V\in \mathcal{DM}^{\infty}(X)$ it holds that $|\div V| \ll \haus^h$.
\end{lemma}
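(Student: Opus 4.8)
The plan is to show that $\mu\coloneqq\div V$, regarded as a finite signed Borel measure, charges no Borel set $E$ with $\haus^h(E)=0$. Since $(X,\dist)$ is proper (because $N<\infty$), both $\mu^+$ and $\mu^-$ are Radon, and for a signed measure one has $|\mu|(E)=\sup\{|\mu(B_1)|+|\mu(B_2)|:E=B_1\sqcup B_2\text{ Borel}\}$; hence it suffices to prove that $\mu(B)=0$ for every Borel $B$ with $\haus^h(B)=0$. Fixing such a $B$ and $\eps>0$, I would use inner regularity of $|\mu|$ to pick a compact $L\subseteq B$ with $|\mu|(B\setminus L)<\eps$ (note $\haus^h(L)=0$) and outer regularity to pick a bounded open $W\supseteq L$ with $|\mu|(W\setminus L)<\eps$. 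The whole argument then rests on the following \emph{cheap cutoff} claim: there exists $\phi\in\Lip_c(W)$ with $0\le\phi\le 1$, $\phi\equiv 1$ on a neighbourhood of $L$, and $\int_X|\nabla\phi|\di\meas<\eps$.

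To prove the cutoff claim, I would exploit that $\haus^h_\delta(L)=0$ for every $\delta>0$ to cover $L$ by finitely many open balls $B_{r_j}(x_j)$, $j=1,\dots,M$ — a finite subcover exists since $L$ is compact — chosen so that $\sum_{j=1}^M \meas(B_{r_j}(x_j))/r_j<\eps$ and, by taking $\delta$ small compared to $\dist(L,X\setminus W)$, so that $B_{2r_j}(x_j)\subseteq W$ for each $j$. Setting $\psi_j\coloneqq\max\{0,\min\{1,\,2-\dist(\cdot,x_j)/r_j\}\}$, one has $\psi_j\in\Lip_c(B_{2r_j}(x_j))$, $\psi_j\equiv 1$ on $B_{r_j}(x_j)$ and $|\nabla\psi_j|\le r_j^{-1}\chi_{B_{2r_j}(x_j)}$ $\meas$-a.e., so that $\phi\coloneqq\min\{1,\sum_{j=1}^M\psi_j\}$ is Lipschitz, compactly supported in $W$, identically $1$ on the open set $\bigcup_j B_{r_j}(x_j)\supseteq L$, and satisfies
\[
\int_X|\nabla\phi|\di\meas\le\sum_{j=1}^M\frac{\meas(B_{2r_j}(x_j))}{r_j}\le C\sum_{j=1}^M\frac{\meas(B_{r_j}(x_j))}{r_j}<C\eps,
\]
where $C$ is a local doubling constant of $\meas$ furnished by Bishop--Gromov (uniform for the radii $r_j<\delta\le 1$ and the centres ranging in a bounded neighbourhood of $L$).

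With the cutoff in hand the conclusion is immediate: since $\phi=\chi_L$ on $L$ and on $X\setminus W$, we have $\{\phi\neq\chi_L\}\subseteq W\setminus L$, whence
\[
|\mu(L)|\le\Big|\int_X\phi\di\mu\Big|+|\mu|(W\setminus L)=\Big|\int_X\nabla\phi\cdot V\di\meas\Big|+\eps\le\|V\|_{L^\infty}\eps+\eps,
\]
using \autoref{def:essbounddivmeas} (which is applicable precisely because $\phi\in\Lip_c(X)$). Therefore $|\mu(B)|\le|\mu(L)|+|\mu|(B\setminus L)<(\|V\|_{L^\infty}+2)\eps$, and letting $\eps\to 0$ gives $\mu(B)=0$, hence $|\div V|(E)=0$. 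The only genuinely non-formal step is the cutoff construction: it is there that the specific shape of the gauge $h(B_r(x))=\meas(B_r(x))/r$ and the local doubling of $\meas$ are used to convert a cover that is small for $\haus^h$ into a family of standard cutoffs whose gradient integrals add up to something small. Everything else is soft measure theory combined with the single feature of $V\in\mathcal{DM}^{\infty}(X)$ that it can be tested against compactly supported Lipschitz functions — in particular no pointwise trace of $V$ is needed, consistently with the fact that at this stage of \autoref{sec5} the traces have not yet been constructed.
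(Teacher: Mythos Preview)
Your proof is correct. Both your argument and the paper's rest on the same core observation — that a set with $\haus^h(E)=0$ can be covered by balls $B_{r_i}(x_i)$ with $\sum_i\meas(B_{r_i}(x_i))/r_i$ arbitrarily small, and that this quantity controls the divergence of $V$ — but the implementations diverge. The paper invokes the Gauss--Green formula of \cite{BuffaComiMiranda19} on each ball $B_r(x)$ to obtain $|\div V(B_r(x))|\le C(N)\meas(B_r(x))/r$, then sums over a cover chosen so that in addition $|\div V(B_{r_i}(x_i))|\ge\tfrac{1}{2}|\div V|(B_{r_i}(x_i))$. You instead build a Lipschitz cutoff $\phi$ directly from the cover and test $V$ against it via the defining identity \eqref{eq:intbypartsebdm}, using only local doubling to bound $\int|\nabla\phi|\,\di\meas$. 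Your route is more self-contained — it needs nothing beyond the definition of $\mathcal{DM}^\infty(X)$ and Bishop--Gromov — and it bypasses the mildly delicate step of arranging the extra condition on the covering balls that the paper needs to pass from $\div V$ to $|\div V|$. The paper's route is a bit shorter once one is willing to cite \cite{BuffaComiMiranda19}. One cosmetic point: your cutoff claim asserts $\int|\nabla\phi|\,\di\meas<\eps$, but the construction you give yields $<C\eps$; of course this is harmless upon rescaling the cover by $1/C$.
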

\begin{proof}
	Let $x\in X$ and $r>0$ be such that $|\div V|(\partial B_r(x))=0$. We apply \eqref{eq:GGformula nonsharp1} with $E=B_r(x)$ and $\phi=1$. A simple blow-up argument shows that $B_r(x) \subset \tilde B_r(x)^{(1)} \subset \bar B_r(x)$, hence
	\begin{equation}\label{eq:div on balls}
		\abs{\div V(B_r(x))} \le \norm{V}_{L^\infty} \Per_{B_r(x)}(X) \le C(N)\haus^h(B_r(x)) \le  C(N) \frac{\meas(B_r(x))}{r} \, ,
	\end{equation}
	where we used that $\mathcal{F}B_r(x) \subset \partial B_r(x)$ and that $h(B_r(x)) = \frac{\meas(B_r(x))}{r}$ is a gauge function for $\haus^h$.
	
	Given a Borel set $E\subset X$ such that $\haus^h(E)=0$ and $\eps>0$, we consider a cover $E\subset \cup_{i\in \setN} B_{r_i}(x_i)$ such that, for any $i\in \setN$, it holds
	\begin{equation}\label{eq:cov}
		|\div V(\bar B_{r_i}(x_i))| 
		= |\div V( B_{r_i}(x_i))| \ge \frac{1}{2}|\div V|(B_{r_i}(x_i)) \, ,
		\quad
		\sum_{i\in \setN} \frac{\meas(B_{r_i}(x_i))}{r_i}< \eps \, .
	\end{equation}
	By using \eqref{eq:div on balls} and \eqref{eq:cov} we deduce
	\begin{align*}
		|\div V|(E) 
		\le \sum_{i\in \setN} |\div V|(B_{r_i}(x_i))
		\le 2 \sum_{i\in \setN} |\div V(B_{r_i}(x_i))|
		\le C(N) \sum_{i\in \setN} \frac{\meas(B_{r_i}(x_i))}{r_i}
		\le C(N) \eps \, ,
	\end{align*}
	which implies the sought conclusion.
\end{proof}

\begin{proof}[Proof of $\rm (a)$]
	From \autoref{lemma:divllHh} we know that $|\div V|\ll \haus^h$, hence ii) in \autoref{prop:federertype} and \autoref{remark:heatdensity vs density} imply that, up to a $|\div V|$-negligible set, it holds
	\[\begin{split}
		X &= \{ \lim_{t\to 0}P_t\chi_E = 0 \} \cup 	\{ \lim_{t\to 0}P_t\chi_E = 1/2 \} \cup 	\{ \lim_{t\to 0}P_t\chi_E = 1 \} \, ,\\
		E^{(0)} &= \{ \lim_{t\to 0}P_t\chi_E = 0 \} \, ,
		\quad
		E^{(1/2)} = \{ \lim_{t\to 0}P_t\chi_E = 1/2 \}\, ,
		\quad
		E^{(1)} = \{ \lim_{t\to 0}P_t\chi_E = 1 \} \, .
	\end{split}\]
	The sought conclusion follows from the fact that $E^{(1/2)} = \mathcal{F}E$ up to a $|\div V|$-negligible set as a consequence of  i) in \autoref{prop:federertype} and $|\div V|\ll \haus^h$.		
\end{proof}

\begin{proof}[Proof of $\rm (b)$]
Our goal is to prove that
\[
\norm{\left(V\cdot\nu_E\right)_{\mathrm{int}}}_{L^{\infty}(\mathcal{F}E,\Per)}\le \norm{V}_{L^{\infty}(E,\meas)}\, .
\]

In order to do so we just slightly refine the last computation in the proof of \cite[Lemma 5.2]{BuffaComiMiranda19} relying on \autoref{thm:DGRCD}. Basically, all we need to know is that the unit normal vector to the set of finite perimeter has length one, in a suitable sense, and that the density of the set of finite perimeter at reduced boundary points is $1/2$.
\medskip

It is sufficient to prove the following: for any nonnegative function $\phi\in C_c(X)$ it holds
\begin{equation}\label{eq:goaltraces}
\lim_{t\downarrow 0}\abs{\int \phi\chi_E\left(\nabla P_t\chi_E\cdot V\right)\di\meas}\le \frac{1}{2}\norm{V}_{L^{\infty}(E)}\int \phi\di\abs{D\chi_E}\, .
\end{equation}
Let us compute, following the proof of \cite[Theorem 2.4]{BruePasqualettoSemola19} and setting
\[
\nu_t:=\frac{\nabla P_t\chi_E}{P_t^*\abs{D\chi_E}}\, ,\;\;\quad\mu_t:=P_t^*\abs{D\chi_E}\meas\, ,
\]
\begin{align}\label{eq:est1}
\nonumber\lim_{t\downarrow 0}\abs{\int \phi\chi_E\left(\nabla P_t\chi_E\cdot V\right)\di\meas}=&\lim_{t\downarrow 0}\abs{\int e^{Kt}\phi\chi_E\left(\nu_t\cdot V\right)\di\mu_t}\\
\le& \lim_{t\downarrow 0}\left(\norm{\phi}_{\infty}\norm{V}_{\infty}\int_X\abs{1-e^{Kt}\abs{\nu_t}}\di\mu_t+\int_X\phi\chi_E\left(\frac{\nu_t}{\abs{\nu_t}}\cdot V\right)\di\mu_t\right)\, .
\end{align}
The first summand above tends to $0$ as $t\downarrow 0$ by \cite[Lemma 2.7]{BruePasqualettoSemola19}, see also \cite{AmbrosioBrueSemola19}. Let us deal with the second one.

We can estimate 
\begin{align}\label{eq:est2}
\nonumber\lim_{t\downarrow 0} \int_X\phi\chi_E\left(\frac{\nu_t}{\abs{\nu_t}}\cdot V\right)\di\mu_t\le& \norm{V}_{L^{\infty}(E,\meas)}\lim_{t\downarrow 0}\int \phi\chi_E\di\mu_t\\
= &  \norm{V}_{L^{\infty}(E,\meas)}\lim_{t\downarrow 0} \int P_t(\phi\chi_E)\di\abs{D\chi_E}\, .
\end{align}
We claim that 
\begin{equation}\label{eq:pointconv}
\lim_{t\downarrow 0} P_t(\phi\chi_E)(x)=\frac{1}{2}\phi(x)\, ,\quad\text{for $\abs{D\chi_E}$-a.e.\ $x$}\, .
\end{equation}
The validity of \eqref{eq:pointconv} can be easily checked relying on the continuity of $\phi$ at any reduced boundary point of $E$, with a simple variant of the argument leading to \autoref{prop:federertype} ii). Given \eqref{eq:pointconv} we can argue by the dominated convergence theorem that 
\[
\lim_{t\downarrow 0} \int P_t(\phi\chi_E)\di\abs{D\chi_E}=\frac{1}{2}\int \phi\di\abs{D\chi_E}\, .
\]
Hence, by \eqref{eq:est1}, we get \eqref{eq:goaltraces}. 
\end{proof}

In \autoref{thm:cutandpaste} we have dealt with the relationships between the Gauss-Green integration by parts formulae over two sets of finite perimeter and those over the sets obtained via elementary operations between them. Therein, the language was that of the tangent module over the boundary of a set of finite perimeter $L^2_E(TX)$ and vector fields were assumed to be sufficiently smooth to have pointwise defined representatives $\Per_E$-a.e.. Having at disposal a well behaved notion of interior/exterior normal trace over the boundary of a set of finite perimeter for any vector field which is bounded and has measure valued divergence, we would like to understand to which extent those operations are well behaved under these regularity assumptions.   

\begin{proposition}\label{prop:cutandpasteweaker}
Let $(X,\dist,\meas)$ be an $\RCD(K,N)$ metric measure space for some $K\in\setR$ and $1\le N<\infty$. Let $E,F\subset X$ be sets of (locally) finite perimeter and let $V\in\mathcal{DM}^{\infty}(X)$. Then the following relationships between normal traces hold true:
\begin{subequations}\begin{align}\label{zz0}
	\left(V\cdot \nu_{E }\right)_{\mathrm{int}}&=\left(V\cdot \nu_{F}\right)_{\mathrm{int}}\, , \quad\text{$\haus^h$-a.e.\ on $\{\nu_E = \nu_F\}$}\, ,\\
\label{zz1}
\left(V\cdot \nu_{E\cap F}\right)_{\mathrm{int}}&=\left(V\cdot \nu_{E}\right)_{\mathrm{int}}\, , \quad\text{$\Per_E$-a.e.\ on $F^{(1)}$}\, ,\\
\label{zz2}
\left(V\cdot \nu_{E\cap F}\right)_{\mathrm{int}}&=\left(V\cdot \nu_{F}\right)_{\mathrm{int}}\, , \quad\text{$\Per_F$-a.e.\ on $E^{(1)}$}\, ,\\
\label{zz3}
\left(V\cdot \nu_{E\cap F}\right)_{\mathrm{int}}&=\left(V\cdot \nu_{E}\right)_{\mathrm{int}}\, , \quad\text{$\haus^h$-a.e.\ on $\{\nu_E = \nu_F\}$}\, .
\end{align}\end{subequations}
Analogous conclusions hold for the exterior normal traces and for the interior and exterior normal traces on $E\cup F$ and on $E\setminus F$.
\end{proposition}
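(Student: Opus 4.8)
The plan is to deduce all four identities \eqref{zz0}--\eqref{zz3} from a single general statement. Recall from \eqref{eq:inttrace} that $\tfrac12(V\cdot\nu_G)_{\mathrm{int}}\abs{D\chi_G}=D\chi_G(\chi_G V)$ is realised as a weak-$*$ subsequential limit of the measures $\nabla P_t\chi_G\cdot(\chi_G V)\,\meas$; combining the Gauss--Green formula \autoref{thm:GaussGreenEssDivVect} with part $\rm(a)$ above shows that this limit is uniquely determined — it is the canonical interior normal trace — hence independent of the chosen subsequence. Since each of \eqref{zz0}--\eqref{zz3} involves only finitely many sets of finite perimeter, I would fix once and for all a sequence $t_k\downarrow 0$ along which all the weak-$*$ limits appearing below exist. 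The De Giorgi--type results \autoref{thm:DGRCD}, \autoref{prop:federertype} and \autoref{prop:blowupsandharmcoord}, together with the representation $\abs{D\chi_G}=\tfrac{\omega_{n-1}}{\omega_n}\haus^h\res\mathcal F_nG$ from \autoref{cor:repr}, will be used throughout.

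\textbf{Two auxiliary facts.} I would record two facts on weak-$*$ limits of heat-regularised gradients, both consequences of the estimates on $\nabla P_t\chi_G$ recalled in the proof of part $\rm(b)$ above (cf.\ \cite[Lemma 2.7]{BruePasqualettoSemola19}): writing $\mu_t^G\coloneqq P_t^*\abs{D\chi_G}\,\meas$, one has $\mu_t^G\weakstarto\abs{D\chi_G}$ and $|\nabla P_t\chi_G|\,\meas\le C(K,t)\,\mu_t^G$ with $C(K,t)\to 1$. \emph{(A) Concentration:} for every $W\in L^\infty(TX)$, any weak-$*$ limit of $\nabla P_t\chi_G\cdot W\,\meas$ is absolutely continuous with respect to $\abs{D\chi_G}$, hence concentrated on $\mathcal FG$. \emph{(B) Localisation by density:} for every $W\in L^\infty(TX)$ and every set of finite perimeter $A$, any weak-$*$ limit $\sigma$ of $\nabla P_t\chi_G\cdot(\chi_A W)\,\meas$ satisfies $|\sigma|\le\norm W_{L^\infty}\,\theta(A,\cdot)\,\abs{D\chi_G}$, so $\sigma$ is concentrated on $\{\theta(A,\cdot)>0\}\cap\mathcal FG$, which up to $\haus^h$-null sets is contained in $\mathcal F A\cap\mathcal FG$. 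For (B) one uses the pointwise bound $|\nabla P_t\chi_G\cdot(\chi_A W)|\le\norm W_{L^\infty}\,\chi_A\,|\nabla P_t\chi_G|$ together with $\int\psi\chi_A|\nabla P_t\chi_G|\,\di\meas\le C(K,t)\int P_t(\psi\chi_A)\,\di\abs{D\chi_G}$ for $\psi\in C_c(X)$, $\psi\ge 0$; letting $t\to 0$, the right-hand side converges to $\int\psi\,\theta(A,\cdot)\,\di\abs{D\chi_G}$ by dominated convergence, since $\abs{D\chi_G}\ll\haus^h$ by \eqref{eq:reprcdim} and $P_t\chi_A\to\theta(A,\cdot)$ holds $\haus^h$-a.e.\ by part $\rm ii)$ of \autoref{prop:federertype} together with \autoref{remark:heatdensity vs density}. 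The inclusion $\{\theta(A,\cdot)>0\}\cap\mathcal FG\sim\mathcal FA\cap\mathcal FG$ then uses that $\theta(A,\cdot)\le\tfrac12$ on $\mathcal FG$ whenever $A\subset G^c$, which is the only case needed below.

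\textbf{Main computation and conclusion.} For two sets of finite perimeter $G,H$ one has, $\meas$-a.e., $\chi_G-\chi_H=\chi_{G\setminus H}-\chi_{H\setminus G}$ and $\chi_G V=\chi_H V+(\chi_{G\setminus H}-\chi_{H\setminus G})V$, whence
\[
\nabla P_t\chi_G\cdot(\chi_G V)-\nabla P_t\chi_H\cdot(\chi_H V)=\nabla P_t(\chi_{G\setminus H}-\chi_{H\setminus G})\cdot(\chi_G V)+\nabla P_t\chi_H\cdot\big((\chi_{G\setminus H}-\chi_{H\setminus G})V\big).
\]
Passing to the weak-$*$ limit along $t_k$ and applying (A) to the first summand and (B) (with $A=G\setminus H$, then $A=H\setminus G$, both contained in $H^c$) to the second, one gets that
\[
\tfrac12(V\cdot\nu_G)_{\mathrm{int}}\abs{D\chi_G}-\tfrac12(V\cdot\nu_H)_{\mathrm{int}}\abs{D\chi_H}\quad\text{is concentrated on}\quad\mathcal F(G\setminus H)\cup\mathcal F(H\setminus G).
\]
Hence, for every Borel set $B$ that is $\haus^h$-negligible inside $\mathcal F(G\setminus H)\cup\mathcal F(H\setminus G)$ and on which $\abs{D\chi_G}\res B=\abs{D\chi_H}\res B$, one concludes $(V\cdot\nu_G)_{\mathrm{int}}=(V\cdot\nu_H)_{\mathrm{int}}$ $\haus^h$-a.e.\ on $B$. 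It then remains to specialise: $(G,H,B)=(E,F,\{\nu_E=\nu_F\})$ yields \eqref{zz0}; $(G,H,B)=(E,E\cap F,F^{(1)})$ yields \eqref{zz1}; $(G,H,B)=(F,E\cap F,E^{(1)})$ yields \eqref{zz2}; $(G,H,B)=(E,E\cap F,\{\nu_E=\nu_F\})$ yields \eqref{zz3}. In each case the two hypotheses are checked via the blow-up description: at $x\in\{\nu_E=\nu_F\}$ the blow-ups of $E$ and of $F$ in common good coordinates coincide, so $x\in(E\setminus F)^{(0)}\cap(F\setminus E)^{(0)}$ and $x\in\mathcal F_nE\cap\mathcal F_nF\cap\mathcal F_n(E\cap F)$, whence $\abs{D\chi_E}$, $\abs{D\chi_F}$ and $\abs{D\chi_{E\cap F}}$ all agree with $\tfrac{\omega_{n-1}}{\omega_n}\haus^h\res\{\nu_E=\nu_F\}$ there; for $x\in F^{(1)}$ one has $E\setminus F\subset F^c$ hence $x\in(E\setminus F)^{(0)}$, while a density computation gives $\mathcal FE\cap F^{(1)}\sim\mathcal F(E\cap F)\cap F^{(1)}$ up to $\haus^h$-null, so $\abs{D\chi_E}\res F^{(1)}=\abs{D\chi_{E\cap F}}\res F^{(1)}$. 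The exterior-trace statements and the statements for $E\cup F$ and $E\setminus F$ follow by applying the above to complements, using $\nabla P_t\chi_{G^c}=-\nabla P_t\chi_G$, which gives $(V\cdot\nu_G)_{\mathrm{ext}}=-(V\cdot\nu_{G^c})_{\mathrm{int}}$, and that complementation swaps $G^{(1)}\leftrightarrow G^{(0)}$ and preserves $\{\nu_E=\nu_F\}$ and $\{\nu_E=-\nu_F\}$; the case of locally finite perimeter sets is recovered by a routine localisation with Lipschitz cutoffs.

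\textbf{Main obstacle.} The step I expect to be the crux is fact (B): making precise that the weak-$*$ limit of $\nabla P_t\chi_G\cdot(\chi_A W)\,\meas$ detects the set $A$ only through its density $\theta(A,\cdot)$. This is exactly what permits localisation on the non-open sets $F^{(1)}$, $E^{(1)}$ and $\{\nu_E=\nu_F\}$, on which the cruder global bounds \eqref{eq:inftyboundtraceintsharp}--\eqref{eq:inftyboundtraceextsharp} give no information. Its proof hinges on the comparison between $|\nabla P_t\chi_G|\,\meas$ and $\mu_t^G$ and on the $\haus^h$-a.e.\ convergence $P_t\chi_A\to\theta(A,\cdot)$ supplied by \autoref{prop:federertype}, together with the fact — guaranteed by \autoref{cor:repr} and \autoref{lemma:divllHh} — that all the measures in play are absolutely continuous with respect to $\haus^h$. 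The remaining ingredients, namely the disjointness-up-to-$\haus^h$-null statements and the coincidences of perimeter measures on the sets $B$, then reduce to routine applications of the structure theory \autoref{thm:DGRCD}--\autoref{prop:blowupsandharmcoord}.
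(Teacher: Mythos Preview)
Your approach is correct and genuinely different from the paper's. The paper proves \eqref{zz0} by a blow-up argument: at a good point $x\in\{\nu_E=\nu_F\}$ it applies the Gauss--Green formula \autoref{thm:GaussGreenEssDivVect} to both $E$ and $F$ against a sequence of test functions $\phi_m$ concentrating at $x$, subtracts, and uses the blow-up description (\autoref{prop:blowupsandharmcoord}) together with Lebesgue-point and $|\div V|$-density estimates to show the difference vanishes in the limit. For \eqref{zz1}--\eqref{zz2} the paper instead uses the product rule $\div(V\chi_E)=\chi_{E^{(1)}}\div V+(V\cdot\nu_E)_{\mathrm{int}}|D\chi_E|$ from \cite[Theorem 6.20]{BuffaComiMiranda19} iterated on $V_E\coloneqq V\chi_E\in\mathcal{DM}^\infty(X)$; \eqref{zz3} is then reduced to \eqref{zz0} via \autoref{thm:cutandpaste}. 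Your route is more uniform: a single concentration lemma for the difference $D\chi_G(\chi_G V)-D\chi_H(\chi_H V)$, obtained by heat-flow approximation, yields all four statements by specialisation. What the paper's argument buys is that \eqref{zz1}--\eqref{zz2} become purely algebraic once the product rule is granted, with no further limiting procedure; what your argument buys is that the blow-up machinery with good coordinates is avoided entirely, replaced by the softer density-$1/2$ information from \autoref{prop:federertype}.

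One small slip: in your application of fact (B) you write ``with $A=G\setminus H$, then $A=H\setminus G$, both contained in $H^c$'', but $H\setminus G\subset H$, not $H^c$. This is harmless, since what you actually need is $\theta(A,\cdot)\le\tfrac12$ on $\mathcal FH$, and this holds for both choices: $G\setminus H\subset H^c$ gives $\theta(G\setminus H,\cdot)\le\theta(H^c,\cdot)=\tfrac12$ on $\mathcal FH$, while $H\setminus G\subset H$ gives $\theta(H\setminus G,\cdot)\le\theta(H,\cdot)=\tfrac12$ on $\mathcal FH$. With this correction your argument goes through.
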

\begin{proof}
    Let us begin by proving \eqref{zz0}. We perform a blow-up argument similar to the one in the proof of \autoref{prop:blowupsandharmcoord} and inspired by the proof of \cite[Proposition 3.4.6]{Comi20}, dealing with the Euclidean case.
    
    We claim that $\haus^h$-a.e.\ $x\in E^{(1/2)}\cap F^{(1/2)}$ satisfies the following:
    \begin{itemize}
    	\item[(i)] 
    	\[\begin{split}
    		&\lim_{r\to 0}\fint_{B_r(x)} \abs{(V\cdot\nu_{E})_{{\mathrm{int}}}(y) - (V\cdot\nu_{E})_{{\mathrm{int}}}(x)} \di \Per_E(y)
    		= 0 \, ,\\
      	&\lim_{r\to 0}\fint_{B_r(x)} \abs{(V\cdot\nu_{F})_{{\mathrm{int}}}(y) - (V\cdot\nu_{F})_{{\mathrm{int}}}(x)} \di \Per_F(y)
    		= 0 \, ;  
    	\end{split}\]
        \item[(ii)] 
        \[
        \lim_{r\to 0} \frac{r|\div V|((E^{(1)}\cup F^{(1)})\cap B_r(x))}{\meas(B_r(x))}
        = 0.
        \]
    \end{itemize}
    The property (i) amounts to say that $x$ is a Lebesgue point for both $(V\cdot\nu_{E})_{{\mathrm{int}}}\in L^\infty(|D\chi_E|)$ and $(V\cdot\nu_{F})_{{\mathrm{int}}}\in L^\infty(|D\chi_F|)$. The latter hold for, respectively, $|D\chi_E|$ and $|D\chi_F|$ almost every point as explained after \autoref{prop:asymdoub}. The claimed conclusion follows by recalling that $|D\chi_E|$ and $|D\chi_F|$ are equivalent to the measure $\haus^h$ restricted, respectively, to $E^{(1/2)}$ and $F^{(1/2)}$.
    
    We now prove that $\haus^h$-a.e.\ $x\in E^{(1/2)}\cap F^{(1/2)}$ satisfies (ii). Assume the existence of $B\subset E^{(1/2)}\cap F^{(1/2)}$ with positive $\haus^h$-measure such that $x_i\in B$, $r_i< \delta$ and 
    \[
    \limsup_{r\to 0} \frac{r|\div V|((E^{(1)}\cup F^{(1)})\cap B_r(x))}{\meas(B_r(x))} \ge \eps > 0 \, ,
    \quad
    \text{for any $x\in B$} \, .
    \]
    We can then find a cover $B\subset \cup_{i\in \setN} B_{r_i}(x_i)$ such that 
    \[
    	0 < \haus^h(B) \le C(N) \sum_{i\in \setN} \frac{\meas(B_{r_i}(x_i))}{r_i} \, ,
    	\quad\quad
    	\frac{r_i|\div V|((E^{(1)}\cup F^{(1)})\cap B_{r_i}(x_i))}{\meas(B_{r_i}(x_i))} \ge \eps/2
    	\quad \forall \, i\in \setN \, .
    \]
    In particular,
    \[
    	|\div V|((E^{(1)}\cup F^{(1)})\cap \{ y\in X\, : \, \dist(x,y) \le \delta\,\,  \text{for some}\,\,  x\in B \})
    	\ge C(N,\eps) \haus^h(B) >0 \, ,
    \]
    letting $\delta\to 0$ we get the sought contradiction
    \begin{align*}
    	0 =& \,|\div V|((E^{(1)}\cup F^{(1)})\cap (E^{(1/2)}\cap F^{(1/2)})) \\
    	\ge &\,|\div V|((E^{(1)}\cup F^{(1)})\cap B)
    	\ge C(N,\eps) \haus^h(B) > 0 \, .
    \end{align*}
    
    \medskip

    Let us pick a point $x \in E^{(1/2)}\cap F^{(1/2)}$ such
    that there exits a system of good coordinates $(u_i): B_r(x) \to \setR^n$ for both $E$ and $F$ such that $\nu_E(x) = \nu_F(x)$ and the properties (i) and (ii) hold. 
    Fix a function $\phi\in C^\infty(\setR^n)$ compactly supported in $B_1(0^n)\subset \setR^n$. Then, thanks to \cite[Lemma 2.10]{AmbrosioHonda18} we find a sequence of uniformly bounded and uniformly Lipschitz functions $\phi_m:X\to\setR$ compactly supported in $B_{r_m}(x)$ such that, when considered along the sequence of scaled spaces $X_m:=(X,r_m^{-1}\dist,\meas_m,x)$ with $\meas_m=\meas/\meas(B_{r_m}(x))$, they converge strongly in $H^{1,2}$ to $\phi$ (see \cite[Definition 5.2]{AmbrosioHonda17}).\\ 
    Let $E_m$ and $F_m$ be $E, F\subset X_m$ and $u_i^m$ be the good coordinates scaled by $u^m_i(y)=(u_i(y)-u_i(x))/r_m$. We can assume that $u^m_i \to x_i$, the $i$-th coordinate of $\setR^n$, in $H^{1,2}$ and, as a consequence of \autoref{prop:blowupsandharmcoord}, $E_m\to H_{\nu_E(x)}$ and $F_m\to H_{\nu_F(x)}$ in $L^1_{\text{\loc}}$.

   By \autoref{thm:GaussGreenEssDivVect} it holds
   \begin{equation}\label{z8}
   	\int_{E^{(1)}}\phi_m \div V+\int_E \nabla\phi_m \cdot V\di\meas
   	=-\int_{\mathcal{F}E}\phi_m \left(V\cdot\nu_E\right)_{\mathrm{int}}\di\Per_E \, ,
   \end{equation}
    \begin{equation}\label{z9}
    	\int_{F^{(1)}}\phi_m \div V+\int_F \nabla\phi_m\cdot V\di\meas
    	=-\int_{\mathcal{F}F}\phi_m\left(V\cdot\nu_F\right)_{\mathrm{int}}\di\Per_F \, .
    \end{equation}
    Fix $\eps>0$ and observe that
    \begin{equation}\label{z5}
    \abs{ \int_{E^{(1)}}\phi_m \div V - \int_{F^{(1)}}\phi_m \div V }
    \le 
    | \div V |((E^{(1)}\cup F^{(1)})\cap B_{r_m}(x))
    \le \eps \frac{\meas(B_{r_m}(x))}{r_m} \, ,
    \end{equation}
    for $r_m$ small enough, as a consequence of (ii).
    
    Recalling that $|\nabla \phi_m| \le C(N) r_m^{-1}$, we get
    \begin{equation}\label{z7}
    	\abs{ \int_E \nabla\phi_m \cdot V\di\meas -  \int_F \nabla\phi_m\cdot V\di\meas }
    	\le C(N) \norm{V}_{L^\infty}   \frac{\meas(E\Delta F \cap B_{r_m}(x))}{r_m}
    	\le \eps \frac{\meas(B_{r_m}(x))}{r_m} \, ,
    \end{equation}
    for $r_m$ small enough, where in the last inequality we used \autoref{prop:blowupsandharmcoord} and $\nu_E(x) = \nu_F(x)$ to infer that
    \begin{equation*}
    	\lim_{m\to \infty} \frac{\meas(E\Delta F \cap B_{r_m}(x))}{\meas(B_{r_m}(x))}
    	= 	
    	\lim_{m\to \infty} \meas_m(E_m\Delta F_m\cap B_1^m(x))
    	=
    	\Leb^n(H_{\nu_E(x)} \Delta H_{\nu_F(x)}\cap B_1(0^n)) = 0 \, .
    \end{equation*}
    Subtracting \eqref{z8} and \eqref{z9}, taking into account \eqref{z5} and \eqref{z7} we deduce
    \begin{equation}\label{z10}
    	\limsup_{m\to \infty} \abs{\frac{r_m}{B_{r_m}(x)}\int_{\mathcal{F}E}\phi_m \left(V\cdot\nu_E\right)_{\mathrm{int}}\di\Per_E
    	-
        \frac{r_m}{B_{r_m}(x)}\int_{\mathcal{F}F}\phi_m\left(V\cdot\nu_F\right)_{\mathrm{int}}\di\Per_F}
        \le 2\eps \, .
    \end{equation}
    On the other hand, by (i),
    \begin{align*}
       &\,\limsup_{m\to \infty} 	\abs{\frac{r_m}{B_{r_m}(x)}\int_{\mathcal{F}E}\phi_m \left(V\cdot\nu_E\right)_{\mathrm{int}}\di\Per_E
       	- \left(V\cdot\nu_E\right)_{\mathrm{int}}(x) \frac{r_m}{B_{r_m}(x)}\int_{\mathcal{F}E}\phi_m \di\Per_E}
       	\\ \quad \le &\,
       	\limsup_{m\to \infty} C(N) \frac{\meas(B_{r_m}(x))}{r_m \Per_E(B_{r_m}(x))} \fint_{B_{r_m}(x)} \abs{\left(V\cdot\nu_E\right)_{\mathrm{int}}(y) - \left(V\cdot\nu_E\right)_{\mathrm{int}}(x)} \di \Per_E(y)\, =\, 0\, .
    \end{align*}
    Finally, observe that
    \[
    	\lim_{m\to \infty} \frac{r_m}{B_{r_m}(x)}\int_{\mathcal{F}E}\phi_m \di\Per_E
    	=
    	\lim_{m\to \infty} \fint \phi_m \di \Per_{E_m}
    	=
    	\fint_{H_{\nu_E(x)}} \phi \di \haus^{n-1} \, , 
    \]
    which along with \eqref{z10} and the identity $\nu_{E}(x) = \nu_F(x)$ allows to conclude that
    \[
    	\abs{ \left(V\cdot\nu_E\right)_{\mathrm{int}}(x) - \left(V\cdot\nu_F\right)_{\mathrm{int}}(x) }
    	\fint_{H_{\nu_E(x)}} \phi \di \haus^{n-1}
    	\le 2\eps \, .
    \]
    Since $\eps>0$ is arbitrary the proof of \eqref{zz0} is complete.

    \bigskip
    
    Let us now pass to the proof of \eqref{zz1} and \eqref{zz2}.
    To simplify our notation we assume that $\chi_E$ and $\chi_F$ are pointwise defined as $\chi_E:= \chi_{E^{(1)}} + \frac{1}{2}\chi_{E^{(1/2)}}$ and $\chi_F:= \chi_{F^{(1)}} + \frac{1}{2}\chi_{F^{(1/2)}}$.

    Thanks to \cite[Theorem 5.3]{BuffaComiMiranda19} we know that $V_E:= V\chi_E\in\mathcal{DM}^{\infty}(X)$, for any
    $V\in \mathcal{DM}^{\infty}(X)$, hence as a consequence of \cite[Theorem 6.20]{BuffaComiMiranda19} it holds
    \begin{subequations}\begin{align}\label{z3}
    	\div( V \chi_{E\cap F})& = \div(V_E \chi_F)
    	=
    	\chi_{F^{(1)}} \div(V \chi_F) + (V_E\cdot\nu_F)_{{\mathrm{int}}} |D\chi_F| \, ,\\
\label{z6}
    	\div(V\chi_E)& = \chi_{E^{(1)}} \div V + (V\cdot\nu_E)_{{\mathrm{int}}}|D\chi_E| \, .
    \end{align}\end{subequations}
    In particular, applying again \cite[Theorem 6.20]{BuffaComiMiranda19}, we deduce that
    \[\begin{split}
    	(V\cdot\nu_{E\cap F})_{{\mathrm{int}}} |D\chi_{E\cap F}|
    	&=
    	\div( V \chi_{E\cap F}) - \chi_{(E\cap F)^{(1)}} \div V
    	\\& =
    	\chi_{F^{(1)}} (V\cdot\nu_E)_{{\mathrm{int}}}|D\chi_E| + (V_E\cdot\nu_F)_{{\mathrm{int}}} |D\chi_F|\, ,
    \end{split}\]
    which restricted to  $F^{(1)}$ implies \eqref{zz1}. Arguing symmetrically we get \eqref{zz2}.

    \bigskip

    Let us finally prove \eqref{zz3}. Thanks to \autoref{thm:cutandpaste} we know that $\{\nu_{E\cap F} = \nu_E\} = \{ \nu_E = \nu_F \}$ up to a $\haus^h$-negligible set, hence \eqref{zz3} follows from \eqref{zz0} applied to $E$ and $E\cap F$.
\end{proof}

\section{An example}\label{sec:example}

In this last section we discuss a class of sets of finite perimeter and essentially bounded divergence measure vector fields to which the theory developed before can be applied: level sets of distance-type functions. In particular, \autoref{prop:leveld} below applies to distance functions from closed sets, sufficiently far away from the closed set itself, as it follows from the Laplace comparison theorem, see \cite{Gigli15,CavallettiMondino20}. Similar results have been obtained for the distance from a given point in \cite[Proposition 2.30]{AntonelliPasqualettoPozzetta21}.

The interest towards these examples comes from some recent applications, where level sets of distance-type functions have been used to construct variations of sets of finite perimeter solving variational problems: in \cite{AntonelliPasqualettoPozzetta21} variations via balls (that are sublevel sets of distances from points) have been used in the study of the isoperimetric problem on $\RCD(K,N)$ spaces, while in \cite{MondinoSemola21} \autoref{prop:leveld} is exploited to prove Laplacian bounds for the distance function from the boundary of locally perimeter minimizing sets of finite perimeter (corresponding to vanishing of the mean curvature in the smooth framework) in the same setting.

\begin{proposition}\label{prop:leveld}
Let $(X,\dist,\meas)$ be an $\RCD(K,N)$ metric measure space. Let $\Omega\Subset\Omega'\subset X$ be open domains and let $\phi:\Omega'\to\setR$ be a $1$-Lipschitz function such that 
\begin{itemize}
\item[i)] $\abs{\nabla \phi}=1$ $\meas$-a.e.\ on $\Omega'$;
\item[ii)] there exists $L\le 0$ such that $\Delta\phi \ge L$ in the sense of distributions on $\Omega'$. In particular, $\phi$ has measure valued Laplacian on $\Omega'$.
\end{itemize}
Then, for $\Leb^1$-a.e.\ $t$ such that $\{\phi=t\}\cap \Omega\neq\emptyset$ it holds that $\{\phi<t\}$ is a set of locally finite perimeter in $\Omega$ and 
\[
\left(\nabla\phi\cdot\nu_{\{\phi<t\}}\right)_{\mathrm{int}}=\left(\nabla\phi\cdot\nu_{\{\phi<t\}}\right)_{\mathrm{ext}}=-1\, \quad \Per_{\{\phi<t\}}\text{-a.e.\ in $\Omega$}\, .
\]
\end{proposition}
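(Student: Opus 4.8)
The plan is to read $V:=\nabla\phi$ as an essentially bounded divergence measure vector field, to rewrite the interior normal trace of $V$ on $\partial\{\phi<t\}$ as an explicit measure by means of \autoref{thm:GaussGreenEssDivVect}, and then to identify that measure with $-\Per_{\{\phi<t\}}$ through a blow-up at a typical reduced boundary point. First I would collect the preliminaries. By (i) one has $|V|=1$ $\meas$-a.e., while by (ii) $\div V=\Delta\phi$ is a locally finite signed Radon measure on $\Omega'$; multiplying $V$ by a cut-off $\eta\in\Lip_c(\Omega')$ equal to $1$ on a neighbourhood of $\overline\Omega$ produces a field in $\mathcal{DM}^{\infty}(X)$ agreeing with $\nabla\phi$ near $\overline\Omega$, so as long as only test functions supported in $\Omega$ enter the argument, we may pretend $V\in\mathcal{DM}^{\infty}(X)$. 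Since $|\nabla\phi|=1$ $\meas$-a.e.\ on $\Omega'$ while $\nabla\phi=0$ $\meas$-a.e.\ on each level set of $\phi$, we get $\meas(\{\phi=t\}\cap\Omega')=0$ for every $t$; combining this with \autoref{thm:coarea} (applied to $\phi$ and to $-\phi$) shows that $\{\phi<t\}=\{\phi\le t\}$, up to $\meas$-null sets, has locally finite perimeter in $\Omega$ for $\Leb^1$-a.e.\ $t$. Fix one such $t$, discarding also the at most countably many values with $|\Delta\phi|(\{\phi=t\})>0$, and set $E:=\{\phi<t\}$.

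Next I would turn the trace into a measure. Note $\chi_E\nabla\phi=\nabla(\phi\wedge t)$ $\meas$-a.e.\ and $\chi_E V\in\mathcal{DM}^{\infty}$, so $\phi\wedge t$ has measure valued Laplacian on $\Omega$; comparing the first identity of \autoref{thm:GaussGreenEssDivVect} with $V=\nabla\phi$ against the very definition of $\div(\chi_E\nabla\phi)$ yields the equality of measures on $\Omega$
\[
(V\cdot\nu_E)_{\mathrm{int}}\,\Per_E=\Delta(\phi\wedge t)-\Delta\phi\res E^{(1)}.
\]
Continuity of $\phi$ gives $\{\phi<t\}\subset E^{(1)}\subset\{\phi\le t\}$ and $\mathcal F E\subset\{\phi=t\}$; since $|\Delta\phi|\ll\haus^h$ by \autoref{lemma:divllHh}, since $\Per_E$ is carried by $\{\phi=t\}$, and since $|\Delta\phi|(\{\phi=t\})=0$, these inclusions give $\Delta\phi\res E^{(1)}=\Delta\phi\res\{\phi<t\}$ and, running the same computation with $E^{(1)}\cup\mathcal F E$ in place of $E^{(1)}$, also $(V\cdot\nu_E)_{\mathrm{ext}}=(V\cdot\nu_E)_{\mathrm{int}}$ $\Per_E$-a.e. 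Hence it remains to prove that $\mu_t:=\Delta(\phi\wedge t)-\Delta\phi\res\{\phi<t\}$ equals $-\Per_E$; writing $\mu_t=g\,\Per_E$ with $\|g\|_{\infty}\le 1$ (by \eqref{eq:inftyboundtraceintsharp}), the task becomes $g\equiv-1$ $\Per_E$-a.e.

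Finally I would run a blow-up at a point $x$ that is $\Per_E$-typical: $x\in\mathcal F_n E$ with good coordinates for $E$ (so \autoref{prop:blowupsandharmcoord} describes the blow-up of $E$ as a half-space), $x$ a Lebesgue point of $g$, and $r|\Delta\phi|(B_r(x))/\meas(B_r(x))\to 0$ — the last holding $\Per_E$-a.e.\ because $|\Delta\phi|\ll\haus^h$, $|\Delta\phi|(\{\phi=t\})=0$ and $\Per_E$ is carried by $\{\phi=t\}$. Rescaling around $x$ by $r_m\downarrow 0$, the $1$-Lipschitz functions $\phi_m:=(\phi-t)/r_m$ have $\Delta\phi_m\to 0$ in total variation on balls, hence converge (along a subsequence) strongly in $H^{1,2}_{\loc}$ and locally uniformly to a harmonic $1$-Lipschitz $\phi_0$ on the Euclidean tangent with $\phi_0(0)=0$ and $|\nabla\phi_0|=1$ a.e.; Liouville forces $\phi_0(y)=a\cdot y$ with $|a|=1$. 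Because $|\nabla\phi_0|=1$, the sets $\{\phi_m<0\}=E$ converge in $L^1_{\loc}$ to $\{\phi_0<0\}=\{a\cdot y<0\}$, which therefore coincides with the half-space $H$ produced by \autoref{prop:blowupsandharmcoord}. The uniform bound $|\mu_t|\le\Per_E$ together with the vanishing of the rescaled $|\Delta\phi|$ lets us pass to the limit in $\mu_t=g(x)\Per_E+o(1)\Per_E$ (valid near $x$ since $x$ is a Lebesgue point of $g$): the left side tends to $\Delta(\phi_0\wedge 0)=-\haus^{n-1}\res\{a\cdot y=0\}$ by an elementary Euclidean computation, while $\Per_E$ tends to $\Per_H=\haus^{n-1}\res\{a\cdot y=0\}$, both up to the same multiplicative constant coming from the density of $\meas$ at $x$, which therefore cancels. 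Hence $g(x)=-1$, i.e.\ $(V\cdot\nu_E)_{\mathrm{int}}=(V\cdot\nu_E)_{\mathrm{ext}}=-1$ $\Per_E$-a.e.\ in $\Omega$. The delicate part is precisely this last step: one must secure the strong $H^{1,2}_{\loc}$-convergence of $\phi_m$, hence of $\phi_m\wedge 0$, so that the measure valued Laplacians converge; verify the compatibility of the blow-up of $\{\phi<t\}$ with that of $\phi$; and track the normalisations carefully enough to see that $g(x)$ is exactly $-1$, and not merely a constant of modulus one.
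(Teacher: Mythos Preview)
Your overall strategy matches the paper's: localise $V=\nabla\phi$ to a field in $\mathcal{DM}^\infty$, use the formula $(V\cdot\nu_E)_{\mathrm{int}}\Per_E=\div(\chi_E V)-\chi_{E^{(1)}}\div V$ (equivalently your identity with $\Delta(\phi\wedge t)$), reduce to showing the density $g\equiv -1$, and identify $g(x)$ by blowing up at a typical reduced boundary point, where the limit becomes the signed distance from a hyperplane and one reads off the trace by direct Euclidean computation. The compatibility step ``the blow-up of $E$ is $\{\phi_0<0\}$'' is also essentially the paper's argument for its property (iv).

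There is one genuine gap. Your route to ``$\phi_0$ is linear'' is: $r|\Delta\phi|(B_r(x))/\meas(B_r(x))\to 0$ at $\Per_E$-a.e.\ $x$, hence $\Delta\phi_m\to 0$, hence $\phi_0$ is harmonic on $\setR^n$, hence linear. But the reasons you give for the density vanishing --- $|\Delta\phi|\ll\haus^h$, $|\Delta\phi|(\{\phi=t\})=0$, and $\Per_E$ concentrated on $\{\phi=t\}$ --- do not imply it: these facts control $|\Delta\phi|$ \emph{on} the level set, not \emph{near} it, and the bound coming from \autoref{lemma:divllHh} only gives $r|\Delta\phi|(B_r(x))/\meas(B_r(x))\le C(N)$, not $\to 0$. (Contrast with the density argument in the proof of \autoref{prop:cutandpasteweaker}, where one restricts $|\div V|$ to a set disjoint from the set carrying $\Per_E$; here you would need the full $|\Delta\phi|$.) The paper avoids this issue entirely: it obtains linearity of the blow-up of $\phi$ at $\meas$-a.e.\ point from Cheeger's differentiation theorem for Lipschitz functions \cite{Cheeger99} (see also \cite[Theorem~5.4]{Ambrosioetalembedding}), and then passes from $\meas$-a.e.\ to $\Per_{\{\phi<t\}}$-a.e.\ for $\Leb^1$-a.e.\ $t$ via the coarea formula and $|\nabla\phi|=1$. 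Once you replace your $\Delta\phi_m\to 0$ step by this citation, the rest of your sketch --- the identification of the blow-up of $E$ with $\{a\cdot y<0\}$, the Euclidean computation $\Delta(\phi_0\wedge 0)=-\haus^{n-1}\res\{a\cdot y=0\}$, and the Lebesgue-point argument for $g$ --- goes through and coincides with the paper's proof.
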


\begin{proof}
By the coarea formula $\{\phi<t\}$ is a set of finite perimeter for $\Leb^1$-a.e.\ $t$. 

Since $\phi$ has measure valued Laplacian on $\Omega'$ and $\abs{\nabla \phi}=1$ $\meas$-a.e., $\nabla\phi$ is a bounded vector field with measure valued divergence on $\Omega'$, i.e.\ $\nabla\phi\in \mathcal{DM}^{\infty}(\Omega')$.\\
Moreover, for $\Leb^1$-a.e.\ $t$, $\abs{\Delta \phi}(\{\phi=t\})=\abs{\div \, \nabla \phi}(\{\phi=t\})=0$. For any such $t$, thanks to \cite[Theorem 6.20]{BuffaComiMiranda19} we have that 
\[
\left(\nabla\phi\cdot\nu_{\{\phi<t\}}\right)_{\mathrm{int}}=\left(\nabla\phi\cdot\nu_{\{\phi<t\}}\right)_{\mathrm{ext}}\, \quad \Per_{\{\phi<t\}}\text{-a.e.}\, .
\]
Therefore it is sufficient to prove that
\begin{equation}\label{eq:trace1}
\left(\nabla\phi\cdot\nu_{\{\phi<t\}}\right)_{\mathrm{int}}=-1\, \quad \Per_{\{\phi<t\}}\text{-a.e.}\, .
\end{equation}
We claim that for $\Leb^1$-a.e.\ $t$ and for $ \Per_{\{\phi<t\}}$-a.e.\ $x\in\{\phi=t\}$ the following hold:
\begin{itemize}
\item[i)] $x$ is a regular point of $(X,\dist,\meas)$ i.e.\ the unique tangent cone of $(X,\dist,\meas)$ is $\setR^n$, for some $1\le n\le N$;
\item[ii)] $x$ is a regular reduced boundary point of the set of finite perimeter $\{\phi<t\}$, i.e.\ any blow-up of $\{\phi<t\}$ at $x$ in the sense of sets of finite perimeter (see \autoref{def:blowupset}) is a half-space in $\setR^n$;
\item[iii)] $x$ is a regular point for $\phi$, i.e.\ any blow-up of the function $\phi$ at $x$ is a linear function $\phi_x:\setR^n\to\setR$;
\item[iv)] if $\mathbb{H}^n\subset\setR^n$ is the blow-up of $\{\phi<t\}$ at $x$, then $\phi_x=\dist^{\pm}_{\mathbb{H}^n}$ is the signed distance function from $\mathbb{H}^n$.
\end{itemize}
We claim that the conditions (i)--(iv) are sufficient to prove that 
\begin{equation}\label{eq:densitynormaltrace}
\lim_{r\downarrow 0} \frac{\left(\nabla\phi\cdot\nu_{\{\phi<t\}}\right)_{\mathrm{int}}\Per_{\{\phi<t\}}(B_r(x))}{\Per(B_r(x))}=-1\, .
\end{equation}

This is the consequence of a blow-up argument, similar to the one in \cite[Theorem 7.4]{BrueNaberSemola20}. By the Gauss-Green integration by parts formulae, the interior normal trace is the density w.r.t.\ the perimeter measure of the divergence of the vector field $\chi_{\{\phi<t\}}\nabla \phi$, which equals $\Delta \left(\phi-t\right)_{-}$, here $\left(\phi-t\right)_{-}$ is the negative part of $\phi-t$. Under the assumptions (i) -- (iv), after blow-up the set of finite perimeter converges to a half-space, and the function $\phi$ converges to the signed distance from its boundary. In that limit case it is easily verified that  
\[
\left(\nabla\dist^{\pm}_{\mathbb{H}^n}\cdot\nu_{\mathbb{H}^n}\right)_{\mathrm{int}}=-1\, ,
\]
by direct computation.\\ 
Scaling and stability of the distributional Laplacian allow then to infer \eqref{eq:densitynormaltrace}.
\medskip

Let us prove now that $\Leb^1$-a.e.\ $t$ and $\Per_{\{\phi<t\}}$-a.e.\ $x$ verify the properties (i) -- (iv). Since $\abs{\nabla \phi}=1$ $\meas$-a.e.\ on $\Omega'$, by the coarea formula \autoref{thm:coarea} this is equivalent to ask that (i) -- (iv) are verified $\meas$-a.e.\ on $\Omega'$.

The properties (i) and (ii) follow from the general theory of sets of finite perimeter over $\RCD(K,N)$ spaces, since for any sublevel set $\{\phi<t\}$ which has finite perimeter it holds that $\Per_{\{\phi<t\}}$-a.e.\ $x$ is a regular reduced boundary point, where all tangents are Euclidean half-spaces, see \autoref{thm:DGRCD}.
\medskip

It remains to prove (iii) and (iv). In order to do so we first point out that for $\meas$-a.e.\ point any blow-up $\phi_x$ of $\phi$ at $x$ is a linear coordinate function on the tangent $\setR^n$. The argument is classical, and originally due to Cheeger \cite{Cheeger99}, so we avoid it. Observe that the assumption that $\abs{\nabla \phi}=1$ $\meas$-a.e.\ on $\Omega'$ here guarantees that the blow-up is a linear function with slope equal to $1$. Indeed
\[
\fint_{B_r(x)}\abs{\nabla \phi}^2\di\meas=1\, ,\quad\text{for any $0<r<\dist(x,\partial \Omega)$}\, .
\]
Therefore, $\phi_x:\setR^n\to\setR$, that is obtained as limit of $\left(\phi-\phi(x)\right)/r_i$ along the sequence of scaled spaces $(X,\dist/r_i,\meas_x^{r_i},x)$, is a harmonic function such that
\[
\fint_{B^{\setR^n}_{r}(0)}\abs{\nabla \phi_x}^2\di\haus^n=1\, ,\quad\text{for any $r>0$}\, ,
\]
hence it is an affine coordinate with slope $1$.
See also \cite[Theorem 5.4]{Ambrosioetalembedding} for a proof of the almost everywhere harmonicity of blow-ups of harmonic functions more tailored to the setting of $\RCD(K,N)$ metric measure spaces.
\medskip

We are left to verify that the blow-up of $\phi$, that is a coordinate function, hence it is the signed distance function from a certain hyperplane, is actually the signed distance function from the boundary of the blow-up of the set of finite perimeter.\\ 
In order to do so, observe that $\left(\phi-t\right)_{-}\le 0$ and 
\[
\int_{\{\phi>t\}\cap B_r(x)} \left(\phi-t\right)_{-}\di\meas =0\, ,
\]
for any $x\in \{\phi=t\}$.\\
This property is stable under blow-up of the set of finite perimeter and of the function $\phi$, by the $L^1_{\loc}$ convergence of the indicator functions of the scaled sets of finite perimeter. Hence, also $\left(\phi_x\right)_{-}$ vanishes everywhere on the complement of the blow up of $\{\phi<t\}$ at $x$.\\
Since the blow-up of $\{\phi<t\}$ at $x$ is a half-space and the blow-up of $\phi$ at $x$ is an affine coordinate function, the only possibility compatible with the observation above is that $\phi$ blows-up to the signed distance function from the boundary of the blow-up to $\{\phi<t\}$ at $x$, as we claimed.
\medskip

To conclude, observe that by \eqref{eq:densitynormaltrace} the density of the interior normal trace of $\nabla \phi$ on the boundary of $\{\phi<t\}$ at $x$ is $-1$ for $\Per_{\{\phi<t\}}$-a.e.\ $x$. This is sufficient to prove \eqref{eq:trace1} by the Lebesgue differentiation theorem, since the perimeter measure is asymptotically doubling, see \autoref{prop:asymdoub}.
\end{proof}

\end{document}